% Stability.tex
% Paper on stable polygonal billiards by Alex Becker
% Last updated: May 5, 2014

\documentclass{amsart}

%-------Packages---------
\usepackage{amssymb,amsfonts}
\usepackage[all,arc]{xy}
\usepackage{enumerate}
\usepackage{pgf}
\usepackage{pgfkeys}
\usepackage{tikz}
\usepackage{xifthen}
\usepackage{tikz-cd}
\usepackage{chngcntr}

\usetikzlibrary{arrows}
\usetikzlibrary{decorations.markings}
\counterwithin{figure}{section}
\usetikzlibrary{calc}
\usetikzlibrary{external}
\tikzexternalize[optimize=false]

%--------Theorem Environments--------
%theoremstyle{plain} --- default
\newtheorem{thm}{Theorem}[section]
\newtheorem{cor}[thm]{Corollary}
\newtheorem{prop}[thm]{Proposition}
\newtheorem{lem}[thm]{Lemma}

\theoremstyle{definition}
\newtheorem{defn}[thm]{Definition}
\newtheorem{defns}[thm]{Definitions}
\newtheorem{con}[thm]{Construction}

\theoremstyle{remark}
\newtheorem{rem}[thm]{Remark}

%---------------Commands------------------

\newcommand{\nc}{\newcommand}
\nc{\slr}{\mathrm{SL}(2,\mathbb R)}
\nc{\R}{\mathbb R}
\nc{\C}{\mathbb C}
\nc{\Z}{\mathbb Z}
\nc{\Q}{\mathbb Q}
\nc{\N}{\mathbb N}
\nc{\td}{\sim}
\nc{\mc}{\mathcal}
\nc{\ec}{\mc{EC}(T,x)}
\nc{\ov}{\overline}
\nc{\Arg}{\mathrm{Arg}}
\nc{\Hom}{\mathrm{Hom}}
\nc{\dbz}{\mathrm{d}\bar z}
\nc{\vp}{\varphi}
\nc{\lra}{\xrightarrow{\hspace*{1cm}}}
\nc{\id}{\mathrm{id}}
\nc{\GL}{\mathrm{GL}}
\nc{\mrm}{\mathrm}
\nc{\SL}{\mrm{SL}}

\nc{\hexagon}[2]{ % draws a hexagon at with bottom left vertex at (x,y)
  \begin{scope}[shift={(#1,#2)}]
    \draw (0,0) -- (1,0) -- (1.5,.866) -- (1,1.732) -- (0,1.732) -- (-.5,.866) -- cycle;
    \foreach \x in {(0,0),(1,0),(1.5,.866),(1,1.732),(0,1.732),(-.5,.866),(.5,0),(.5,1.732),(-.25,.433),(1.25,1.299),(1.25,.433),(-.25,1.299),(0.5,.866)}
      {\node[draw,circle,inner sep=1pt,fill] at \x {};}
  \end{scope}
}
\nc{\hexagonprime}[2]{ % hexagon without punctures at some vertices
  \begin{scope}[shift={(#1,#2)}]
    \draw (0,0) -- (1,0) -- (1.5,.866) -- (1,1.732) -- (0,1.732) -- (-.5,.866) -- cycle;
    \foreach \x in {(.5,0),(.5,1.732),(-.25,.433),(1.25,1.299),(1.25,.433),(-.25,1.299),(0.5,.866)}
      {\node[draw,circle,inner sep=1pt,fill] at \x {};}
  \end{scope}
}
\nc{\hexagonnodots}[2]{ % hexagon without punctures at any vertices
  \begin{scope}[shift={(#1,#2)}]
    \draw (0,0) -- (1,0) -- (1.5,.866) -- (1,1.732) -- (0,1.732) -- (-.5,.866) -- cycle;
  \end{scope}
}
\nc{\trihexagon}[2]{ % draws a hexagon which is triangulated by the 30-60-90 triangle and shaded alternately
  \hexagon{#1}{#2}
  \draw[shift={(#1,#2)}] (0,0) -- (1,1.732);
  \draw[shift={(#1,#2)}] (1,0) -- (0,1.732);
  \draw[shift={(#1,#2)}] (-.5,.866) -- (1.5,.866);
  \draw[shift={(#1,#2)}] (.5,0) -- (.5,1.732);
  \draw[shift={(#1,#2)}] (-.25,.433) -- (1.25,1.299);
  \draw[shift={(#1,#2)}] (1.25,.433) -- (-.25,1.299);
  \fill[opacity=0.3, shift={(#1,#2)}] (.5,0) -- (1,0) -- (.5,.866) -- cycle;
  \fill[opacity=0.3, shift={(#1,#2)}] (1.25,.433) -- (1.5,.866) -- (.5,.866) -- cycle;
  \fill[opacity=0.3, shift={(#1,#2)}] (1.25,1.299) -- (1,1.732) -- (.5,.866) -- cycle;
  \fill[opacity=0.3, shift={(#1,#2)}] (.5,1.732) -- (0,1.732) -- (.5,.866) -- cycle;
  \fill[opacity=0.3, shift={(#1,#2)}] (-.25,1.299) -- (-.5,.866) -- (.5,.866) -- cycle;
  \fill[opacity=0.3, shift={(#1,#2)}] (-.25,.433) -- (0,0) -- (.5,.866) -- cycle;
}
\nc{\trihexagonnodots}[2]{ % draws a hexagon which is triangulated by the 30-60-90 triangle and shaded alternately
  \hexagonnodots{#1}{#2}
  \draw[shift={(#1,#2)}] (0,0) -- (1,1.732);
  \draw[shift={(#1,#2)}] (1,0) -- (0,1.732);
  \draw[shift={(#1,#2)}] (-.5,.866) -- (1.5,.866);
  \draw[shift={(#1,#2)}] (.5,0) -- (.5,1.732);
  \draw[shift={(#1,#2)}] (-.25,.433) -- (1.25,1.299);
  \draw[shift={(#1,#2)}] (1.25,.433) -- (-.25,1.299);
  \fill[opacity=0.3, shift={(#1,#2)}] (.5,0) -- (1,0) -- (.5,.866) -- cycle;
  \fill[opacity=0.3, shift={(#1,#2)}] (1.25,.433) -- (1.5,.866) -- (.5,.866) -- cycle;
  \fill[opacity=0.3, shift={(#1,#2)}] (1.25,1.299) -- (1,1.732) -- (.5,.866) -- cycle;
  \fill[opacity=0.3, shift={(#1,#2)}] (.5,1.732) -- (0,1.732) -- (.5,.866) -- cycle;
  \fill[opacity=0.3, shift={(#1,#2)}] (-.25,1.299) -- (-.5,.866) -- (.5,.866) -- cycle;
  \fill[opacity=0.3, shift={(#1,#2)}] (-.25,.433) -- (0,0) -- (.5,.866) -- cycle;
}
\nc{\hexagons}[2]{ % draws an x-by-y grid of hexagons
  \foreach \x in {1,...,#1}
    \foreach \y in {1,...,#2}
      {\hexagonprime{1.5*\x}{-.866*\x+1.732*\y}}
}

\numberwithin{equation}{section}
\setcounter{tocdepth}{1}

\bibliographystyle{plain}

\title{On the Local Theory of Billiards in Polygons}

\author{Alex C. Becker}

\begin{document}

\begin{abstract}
A periodic trajectory on a polygonal billiard table is \emph{stable} if it persists under any sufficiently small perturbation of the table.
It is a standard result that a periodic trajectory on an $n$-gon gives rise in a natural way to a closed path on an $n$-punctured sphere, 
and that the trajectory is stable iff this path is null-homologous.
We present a novel proof of this result in the language of covering spaces, which generalizes to characterize 
the stable trajectories in neighborhoods of a polygon.
Using this, we classify the stable periodic trajectories near the 30-60-90 triangle,
giving a new proof of a result of Schwartz that no neighborhood of the triangle can be covered by a finite union of orbit tiles.
We also extend a result of Hooper and Schwartz that the isosceles Veech triangles $V_n$ admit no periodic trajectories for $n=2^m,m\ge 2$, 
and examine their conjecture that no neighborhood of $V_n$ can be covered by finitely many orbit tiles.
\keywords{Billiards \and Stability \and Periodic trajectories}
\subjclass{Primary 37D50, Secondary 37B99}
\end{abstract}

\maketitle

\tableofcontents

\section{Introduction}

One of the simplest possible dynamical systems consists of a single billiard ball bouncing around inside a polygon.
Interest in these systems is twofold. First, they are considered model dynamical systems, and so methods developed
to analyze them are often more widely applicable and offer insight into a variety of systems. Second, billiards in polygons often
arise in physical systems. For example, the system of two unequal masses colliding elastically on the unit interval is isomorphic
to a billiard in a right triangle. Additionally, the system of three masses in a circle with fixed center of mass is isomorphic to
a billiard in an acute triangle. Details of these examples can be found in \S 1.2 of a survey by Masur and Tabachnikov
\cite{FlatStruc}.

The study of polygonal billiards dates back at least to J. F. de Tuschis a Fagnano, 
who in 1775 proved that all acute triangles admit periodic trajectories.
We make all the standard mathematical assumptions--the ball has zero radius, there is no friction, and collisions with the edges are
perfectly elastic. Even so, many questions about the dynamics of these systems remain open.
For example, it is unknown whether every polygon, or even every triangle, admits a periodic trajectory.
In the general case, the strongest known result is that every triangle with angles less than $100$ degrees admits a periodic trajectory,
due to Schwartz \cite{Deg1002}.

In contrast, the so-called \emph{rational case} where the polygon has angles that are rational multiples of $\pi$ is fairly well-understood.
A survey of the techniques applied to rational polygons and the results obtained for them thus far can be found in \cite{FlatStruc}. 
One of the strongest such results is that periodic trajectories are dense in the phase space of trajectories \cite{Rational}. 
When the polygon additionally has the \emph{lattice property}, we can even use the work of Veech in \cite{Veech}
to derive precise asymptotics on the number of periodic trajectories of a given length.

A natural generalization is to ask how the dynamics vary as we perturb the polygon, i.e. how billiards behave locally near
a given polygon in the phase space of polygons.
One question is which trajectories on the polygon are stable under these perturbations.
Surprisingly, some polygons admit no stable trajectories, even if they admit many trajectories;
such polygons include right triangles, due to Hooper \cite{RightTri}, and the Veech triangles $V_n$ for $n=2^m,m\ge 2$
due to Hooper and Schwartz \cite{NearlyIsosc}.

However, characterizing stable trajectories on a polygon $P$ alone does not fully determine the local theory of billiards near $P$.
We also want to know about trajectories which exist on polygons arbitrarily close to $P$, but which degenerate to saddle connections
on $P$ itself. To the author's knowledge, the only work on this subject is that of Schwartz, who proved in \cite{Deg1001} that no
neighborhood of the 30-60-90 triangle can be covered by finitely many orbit tiles.
The aim of this paper is to extend results for stable trajectories on $P$ to results for stable trajectories in neighborhoods of $P$.
Lemma~\ref{lem:from stable} characterizes the saddle connections which arise from stable trajectories.
In \S 4 and \S 5, we apply this to triangles for which enumerations of all saddle connections are known in order to understand the
local theory of billiards near them.

With the exception of \S 5, only a background of basic group theory, algebraic topology and real analysis is assumed,
although the reader may benefit from previous exposure to polygonal billiards.
For \S 5, it is assumed that the reader is familiar with the work of Hooper and Schwartz in \cite{RightTri, NearlyIsosc},
and greater familiarity with group theory and homology is necessary.
Each section builds on the previous sections, with the exception of \S 4 and \S 5 which are logically independent of each other.

\subsection{Background} 
We begin by reviewing the basic concepts in polygonal billiards, and giving the precise definitions
we will make use of later. A more thorough introduction to billiards in polygons can be found in \S 1 of 
\cite{FlatStruc} (with a focus on the rational case), or in \S 2 of \cite{Deg1001} (with a focus on the general case).

\begin{defns}
A \emph{trajectory} on a polygon $P$ is a function $f:\R\to P$ with constant derivative except when $f(t)$ lies on (or \emph{strikes}) an edge, 
at which point the derivative is reflected across the line normal to the edge. 
We also require that $f(t)$ is never a vertex.
If $f(0)=f(a)$ and $f'(0)=f'(a)$ for some $a\ne 0$, we say $f$ is \emph{periodic} and normalize so that $a=1$.
\end{defns}

We will assume the edges of $P$ are labeled from $1$ to $n$.
The standard technique for analyzing a trajectory $f:\R\to P$ is to consider the unfolding of $P$ along $f$.

\begin{defn}
Each time $f$ strikes an edge, instead of reflecting the trajectory, we reflect $P$ across the edge and have the trajectory continue in a straight
line. The union of the resulting polygons is the \emph{unfolding}. 
\end{defn}

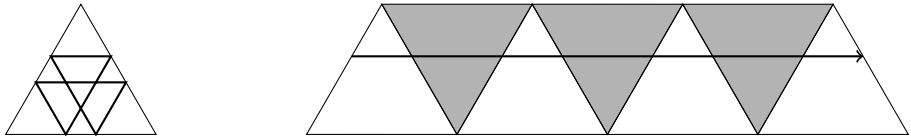
\begin{figure}
\begin{tikzpicture}[scale=2]
\begin{scope}[shift={(-5,0)}]
\draw (0.0000, 0.0000) -- (-0.5000, 0.8660) -- (-1.0000, 0.0000) -- cycle;
\draw[thick, ->] (-.7,.5196) -- (-.3,.5196) -- (-.6,0) -- (-.8,.3464) -- (-.2,.3464) -- (-.4,0) -- cycle;
\end{scope}

\draw (0.0000, 0.0000) -- (-0.5000, 0.8660) -- (-1.0000, 0.0000) -- cycle;
\draw (-1.5000, 0.8660) -- (-0.5000, 0.8660) -- (-1.0000, 0.0000) -- cycle;
\fill[opacity = 0.30] (-1.5000, 0.8660) -- (-0.5000, 0.8660) -- (-1.0000, 0.0000) -- cycle;
\draw (-1.5000, 0.8660) -- (-2.0000, 0.0000) -- (-1.0000, 0.0000) -- cycle;
\draw (-1.5000, 0.8660) -- (-2.0000, 0.0000) -- (-2.5000, 0.8661) -- cycle;
\fill[opacity = 0.30] (-1.5000, 0.8660) -- (-2.0000, 0.0000) -- (-2.5000, 0.8661) -- cycle;
\draw (-3.0000, 0.0000) -- (-2.0000, 0.0000) -- (-2.5000, 0.8661) -- cycle;
\draw (-3.0000, 0.0000) -- (-3.5000, 0.8661) -- (-2.5000, 0.8661) -- cycle;
\fill[opacity = 0.30] (-3.0000, 0.0000) -- (-3.5000, 0.8661) -- (-2.5000, 0.8661) -- cycle;
\draw (-3.0000, 0.0000) -- (-3.5000, 0.8661) -- (-4.0000, 0.0001) -- cycle;
\draw[thick, ->] (-3.7,.5196) -- (-.3,.5196);
\end{tikzpicture}
\caption{The Fagnano orbit $f$ on the equilateral triangle $T$, and the unfolding of $T$ along $f$.}
\label{fig:unfolding}
\end{figure}

This is illustrated in Figure~\ref{fig:unfolding}. Note that we can form the unfolding along an arbitrary path, so long as it does not include
any vertices; we will make heavy use of this later.
Folding these polygons back together gives a covering of $P$ by the unfolding, and a trajectory on $P$ lifts to a straight line on the unfolding.
It is easy to see that a trajectory is periodic iff its lift to the unfolding strikes two copies of $P$ which differ by a translation
in the same place.

\begin{rem}
From here on, we will always work in the unfolding along some trajectory or a more general covering
space. For notational convenience, we call the lift of a trajectory to these spaces a trajectory, and do not distinguish between a trajectory
and its lifts, relying on context to make the underlying space clear.
\end{rem}

Unfolding turns out to be so useful that we will generalize the process in \S 2, and view questions of stability as fundamentally questions about
properties of certain coverings.
One fact made obvious by the unfolding is that periodic trajectories occur in parallel families--translating a periodic trajectory up or down
by $\epsilon$ results in another periodic trajectory for small $\epsilon$. However, for critical values of $\epsilon$, the result will strike a
vertex, producing a saddle connection.

\begin{defn}
A straight line in an unfolding which traverses successive polygons and is periodic but includes one or more vertices is a \emph{saddle connection}.
\end{defn}

This definition is nonstandard. Typically, a saddle connection is defined as a line segment joining a pair of vertices.
Our saddle connections are clearly disjoint unions of such segments, so we call these segments \emph{components} of saddle connections.
Saddle connections are crucial to understanding the local theory of billiards on polygons, 
because they arise as the limiting case of trajectories when we apply Construction~\ref{con:new trajectory}.

We can also use unfoldings to formalize the notion of having the ``same'' periodic trajectory (or closed path) on different polygons, up to translation.

\begin{con}\label{con:new trajectory}
Let $f$ be a closed path on an $n$-gon $P$, $U$ the unfolding of $P$ along $f$ embedded in $\C$ such that $f(0)=0$, 
and $Q$ another $n$-gon. Let $U'$ be the set obtained by replacing each copy of $P$ in $U$ with $Q$.
Let $e$ be the edge in $U$ containing $f(1)$, parametrized by $[0,1]$ in the obvious way, and $t$ such that $e(t)=f(1)$.
Let $e'$ be the corresponding edge in $U'$. We have a unique rotation $R$ such that $Rf$ passes through $e'(t)$, and after renormalizing $f(1)=e(t)$.
If the copies of $Q$ containing $Rf(0)$ and $Rf(1)$ differ by translation and $Q$ is sufficiently close to $P$,
then some translate $g$ of $Rf$ is contained in $U'$ and $U'$ is the unfolding of $Q$ along $g$.
\end{con}

This construction is more complicated than typical, 
but extends to arbitrary closed paths rather than just trajectories, which is crucial for our project.
Intuitively, we replace the copies of $P$ in the unfolding along $f$ with copies of $Q$, and then rotate the path so that it remains closed.
Note that if $f$ is a trajectory, $g$ is as well.
When discussing trajectories on different polygons, we will often apply Construction~\ref{con:new trajectory} implicitly.

\begin{defn}
A periodic trajectory $f$ on an $n$-gon $P$ is \emph{stable} if, for any $n$-gon $Q$ sufficiently near $P$, applying
Construction~\ref{con:new trajectory} to $f$ gives a periodic trajectory on $Q$.
\end{defn}

\begin{defn}
The set of $n$-gons $\mathcal O$ admitting a given periodic trajectory $f$ is the \emph{orbit tile} of $f$.
\end{defn}

\begin{rem}
If $n=3$, we consider $\mathcal O$ a subset of the set of triangles up to similarity, which is parametrized by two angles less than $\pi$.
\end{rem}

Clearly a periodic trajectory is stable iff its orbit tile is open.
The typical strategy for proving the existence of periodic trajectories on general polygons (or at least triangles) 
is to enumerate collections of orbit tiles and show that the tiles cover the set in consideration \cite{NearlyIsosc, Deg1001, Deg1002}.
One important property of orbit tiles requires mention, a proof of which can be found in \S 2.5 of \cite{NearlyIsosc}.

\begin{prop}[\cite{NearlyIsosc}]
If $\mathcal O$ is an open orbit tile, its boundary is piecewise analytic.
\end{prop}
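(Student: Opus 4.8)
The plan is to exhibit the orbit tile $\mc O$ locally as the region on which a single straight trajectory, built by Construction~\ref{con:new trajectory}, remains genuine, and to show that its boundary is precisely where this trajectory degenerates by striking a vertex---an analytic event in the shape parameters. First I would fix the periodic trajectory $f$ on $P$ together with its unfolding $U$, and parametrize the $n$-gons $Q$ near $P$ by a finite vector of shape parameters $\theta$ (two angles in the triangle case). Since $f$ is periodic, only finitely many copies of the polygon, and hence finitely many edges and vertices, are relevant; the combinatorial data recording which edges the trajectory crosses and in what order is locally constant near any interior point of $\mc O$.

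The key technical step is to check that, for $\theta$ in a fixed combinatorial cell, every geometric datum produced by Construction~\ref{con:new trajectory} depends real-analytically on $\theta$. The vertices of $Q$ are analytic (indeed trigonometric) functions of the angles; each reflection across an edge is an isometry whose coefficients are analytic in the edge, hence in $\theta$; composing the finitely many reflections shows that the vertices of the unfolding $U'$ are analytic in $\theta$. The rotation $R$ is pinned down by the requirement that $Rf$ pass through $e'(t)$, which determines $R$, and therefore the endpoints and direction of the candidate trajectory $g_\theta$, analytically.

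With analyticity in hand, I would observe that $Q \in \mc O$ precisely when $g_\theta$ is a genuine trajectory: it must cross the prescribed sequence of edges transversally and must avoid every vertex $v_j(\theta)$ of $U'$. Each such condition is open, which reproves that $\mc O$ is open; and $\theta \in \partial \mc O$ exactly when one of these inequalities degenerates into an equality, that is, when $g_\theta$ passes through some vertex $v_j(\theta)$ or through an endpoint of an edge it had been crossing. Writing $F_j(\theta)$ for the determinant measuring the signed distance from $v_j(\theta)$ to the line carrying $g_\theta$, each such event is the vanishing of one real-analytic function, so $\partial \mc O$ is contained in the union of the finitely many zero sets $\{F_j = 0\}$.

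Finally I would conclude by the structure theory of real-analytic sets: the zero set of a not-identically-zero real-analytic function on $\R^k$ is a locally finite union of analytic submanifolds, hence piecewise analytic, and a finite union of such sets is again piecewise analytic. The main obstacle is not any single analyticity computation but rather the bookkeeping across combinatorial cells: the edge sequence of the unfolding can change as $\theta$ crosses $\partial \mc O$, so one must verify that the finitely many cells meeting a boundary point glue along these analytic arcs without introducing non-analytic corners, and that no accumulation of relevant vertices occurs---both of which follow from the finiteness forced by periodicity of $f$ together with the local constancy of the combinatorics in the interior.
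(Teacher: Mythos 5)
Your overall strategy---make every datum of the unfolding real-analytic in the angle parameters and trap $\partial\mathcal{O}$ inside finitely many zero sets of analytic functions---is the right one, and it is essentially the argument in the source the paper cites for this proposition (the paper itself gives no proof, deferring to \S 2.5 of \cite{NearlyIsosc}, where the proof is the ``corridor'' computation you are reconstructing). However, there is a genuine gap in your middle step. By Construction~\ref{con:new trajectory}, $Q$ lies in $\mathcal{O}$ iff \emph{some translate} of the rotated line $Rf$ is contained in $U'$; it is not required that the particular translate through $e'(t)$ work. Periodic trajectories come in parallel families, and near $\partial\mathcal{O}$ the surviving family is a thin parallel strip which in general does not contain your preferred line $g_\theta$. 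Consequently the set you actually cut out (``$g_\theta$ crosses the prescribed edges and avoids all vertices'') is a \emph{proper} subset of $\mathcal{O}$; and at a point $\theta_0\in\partial\mathcal{O}$ the strip degenerates to a saddle connection which is generically a \emph{different} parallel line from $g_{\theta_0}$, so every one of your functions $F_j(\theta_0)$ can be nonzero there. Both implications of your claim ``$\theta\in\partial\mathcal{O}$ exactly when some $F_j(\theta)=0$'' fail, so the inclusion $\partial\mathcal{O}\subseteq\bigcup_j\{F_j=0\}$ is unproved (indeed false), and the concluding appeal to the structure theory of analytic sets has nothing to bite on.

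The repair is short and is exactly what \cite{NearlyIsosc} does: your $F_j$ are the right raw material, but the tile is carved out by their \emph{pairwise differences}, not by their individual signs. Measure heights of the vertices of $U'$ in the direction perpendicular to $Rf$ (these are analytic in $\theta$ by your own computation), letting $F_i$ run over vertices that must lie above the trajectory and $F_j$ over those that must lie below. A translate of $Rf$ contained in $U'$ exists iff $\max_j F_j(\theta)<\min_i F_i(\theta)$, so locally $\mathcal{O}=\{\theta:\min_{i,j}(F_i-F_j)(\theta)>0\}$ and $\partial\mathcal{O}\subseteq\bigcup_{i,j}\{F_i-F_j=0\}$, a finite union of zero sets of analytic functions, none identically zero (if some $F_i-F_j\equiv 0$ the corridor would never have positive width and $\mathcal{O}$ would be empty). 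Two further remarks: openness of $\mathcal{O}$ means $f$ is stable, so the rotational closing-up requirement in Construction~\ref{con:new trajectory} holds identically for all $Q$ near $P$ and contributes no extra boundary pieces; and your closing worry about gluing combinatorial cells is vacuous, since an orbit tile is by definition attached to a single combinatorial word, hence to a single unfolding---there is only one cell to consider.
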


\section{The Stability Lemma}

Much of the theory of stable periodic trajectories is based on the following lemma, which gives a geometric characterization of stability.

\begin{lem}[Stability Lemma]
Let $\mathcal{D}P$ be the surface obtained by identifying two copies of $P$ along the edges, with punctures at the vertices. 
A periodic trajectory on $P$ is stable iff its lift to $\mathcal{D}P$ is null-homologous. 
\end{lem}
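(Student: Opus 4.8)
The plan is to reinterpret stability as the statement that the \emph{linear holonomy} of the developing map of $f$ is unchanged under every sufficiently small perturbation of $P$, and then to translate that invariance into the homological condition by a direct computation on $H_1(\mathcal{D}P)$.

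First I would record the topology and flat geometry of $\mathcal{D}P$. Gluing two copies of an $n$-gon along their edges yields a sphere, and deleting the $n$ vertices makes it an $n$-punctured sphere, so $H_1(\mathcal{D}P;\Z)\cong\Z^{\,n-1}$, generated by the loops $\gamma_1,\dots,\gamma_n$ encircling the punctures subject to the single relation $\gamma_1+\cdots+\gamma_n=0$. The flat metric inherited from $P$ has a cone point of angle $2\alpha_i$ at the $i$-th puncture, where $\alpha_1,\dots,\alpha_n$ are the interior angles of $P$. Because $f$ is periodic it crosses an even number of edges, so its lift to $\mathcal{D}P$ (which switches copies at each crossing) is a genuine closed loop; let $w_i$ denote its winding number about the $i$-th puncture, so that $[f]=\sum_i w_i[\gamma_i]$ and $f$ is null-homologous precisely when all the $w_i$ are equal.

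Next I would compute the linear holonomy. Each edge-crossing in the unfolding is a reflection whose linear part depends only on the direction of the edge, and hence only on the angles $\alpha_i$, not on the side lengths; consequently the holonomy of the flat structure factors through an abelian homomorphism $\theta_\alpha\colon H_1(\mathcal{D}P)\to SO(2)=\R/2\pi\Z$ sending $[\gamma_i]$ to rotation by the cone angle $2\alpha_i$. Summing over the class of $f$ gives $\theta_\alpha([f])=2\sum_i w_i\alpha_i$. Using the covering-space description of the unfolding, the key point to verify is that, for a polygon with angles $\alpha$, the loop $[f]$ is realized by an honest straight periodic trajectory exactly when $\theta_\alpha([f])=0$: triviality of the linear part forces the two end copies of the polygon to differ by a translation, which is the closing-up condition isolated in Construction~\ref{con:new trajectory}, and conversely a nonzero rotation obstructs closing.

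Finally I would combine these with Construction~\ref{con:new trajectory}. For $Q$ near $P$ the same combinatorial loop has the same winding numbers, so $f$ survives on $Q$ iff $\theta_{\alpha(Q)}([f])=0$; since side lengths are irrelevant we may vary only the angles, which move in the hyperplane $\{\sum_i\delta_i=0\}$ cut out by the constraint $\sum_i\alpha_i=(n-2)\pi$. As $\theta_\alpha([f])=0$ already holds on $P$, the perturbed value is $2\sum_i w_i\delta_i$, and stability is the requirement that this vanish for all small $\delta$ with $\sum_i\delta_i=0$. The linear functional $\delta\mapsto\sum_i w_i\delta_i$ vanishes on that hyperplane iff $(w_1,\dots,w_n)$ is a multiple of $(1,\dots,1)$, i.e.\ iff the $w_i$ are all equal, i.e.\ iff $[f]=0$ in $H_1(\mathcal{D}P)$. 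The main obstacle is the middle step: carefully setting up the covering-space and developing-map framework so that the phrase ``differ by a translation'' in Construction~\ref{con:new trajectory} is rigorously identified with vanishing of the linear holonomy, and in particular proving the sufficiency direction---that trivial holonomy actually produces a nearby periodic orbit (an openness argument) rather than merely failing to obstruct one.
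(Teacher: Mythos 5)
Your proposal is correct in substance, but it takes a genuinely different route from the paper: what you describe is essentially the holonomy argument that the paper attributes to Hooper \cite{RightTri} and deliberately sets aside. You characterize stability by computing the linear holonomy $\theta_\alpha([f])=2\sum_i w_i\alpha_i$ as a homomorphism on $H_1(\mathcal{D}P)$ and demanding that it stay trivial as the angles move in the hyperplane $\sum_i\delta_i=0$. The paper instead first proves that stability is equivalent to the lift of $f$ to the generic unfolding $U_g(P)$ being closed (this is where Construction~\ref{con:new trajectory} and Lemma~\ref{lem:lin ind} do their work), and then shows by pure group theory that the image of $\pi_1(U_g(P))\to\pi_1(\mathcal{D}P)$ is exactly the commutator subgroup: identifying $\pi_1(\mathcal{D}P)\cong H_n$ and $\pi_1(U_g(P))\cong\tau_n$, it proves $\tau_n=[H_n,H_n]$ by comparing the quotients $H_n/\tau_n\cong\Z^{n-1}\cong H_n/[H_n,H_n]$ and using that $\Z^{n-1}$ is not a proper quotient of itself. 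Your route is more elementary and more quantitative: it exhibits the local defining equation of the orbit tile (the linear condition that $\sum_i w_i\alpha_i$ lie in $\pi\Z$), which immediately recovers Corollary~\ref{cor:not open}. The paper's route buys the formulation that the rest of the paper actually uses: stability as a lifting criterion applies verbatim to arbitrary closed paths, which is what makes the extension to the deformed saddle connections $\mathring g$ in Lemma~\ref{lem:from stable} automatic.

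Three points in your write-up need repair, all minor. First, your claim that periodicity forces an even number of edge crossings is false --- the Fagnano orbit crosses three edges per period --- so the lift to $\mathcal{D}P$ may close up only after two periods; pass to the doubled period, which is harmless since $H_1(\mathcal{D}P;\Z)$ is torsion-free and so $2[f]=0$ iff $[f]=0$. Second, you should justify that every small $\delta$ with $\sum_i\delta_i=0$ is realized by an honest $n$-gon near $P$ (true: after slightly rotating the edge directions, the side lengths provide enough freedom to close the polygon up, and this is also implicitly needed in the paper's own converse). Third, the openness step you flag as the main obstacle --- that trivial holonomy on $Q$ actually produces a periodic trajectory rather than merely failing to obstruct one --- is precisely the final clause of Construction~\ref{con:new trajectory}, so you may cite it rather than reprove it.
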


This can be proved by combinatorial arguments, as Schwartz did in \cite{Deg1001}, or by using holonomy as Hooper did in \cite{RightTri}.
We present a proof using covering space theory, which makes the geometric connection clear and
will give us a natural way to extend the result to saddle connections. 

First we consider several groups which act on the set of polygons in the plane.

\begin{defns}
Consider a generic $n$-gon $P_g$ with vertices at $(x_i,y_i)$.
Define $G_n$ as the group of affine maps $R_{i_j}\cdots R_{i_1}$ which are obtained by reflecting $P_g$ across its $i_1$th edge,
then reflecting $R_{i_1}P_g$ across its $i_2$th edge, and so on.
Let $D$ denote the derivative map on $G_n$ and define $\tau_n=\ker D$.
Let $\mathrm{eval}_P$ denote the evaluation map on $G_n$ which sends $(x_i,y_i)$ to the $i$th vertex of an $n$-gon $P$
and $\mathrm{eval}_P'$ the evaluation map on $DG_n$.
Define $G_n(P)=\mathrm{eval}_P(G_n)$ and $\tau_n(P)=\ker(\mathrm{eval}_P'\circ D)$.
\end{defns}

This allow us to define the unfolding formally.

\begin{defns}
Puncture $P$ at its vertices. Let 
\[S_g(P)=\coprod\limits_{g\in G_n(P)} gP\Big/\sim\]
where $\sim$ is the equivalence relation obtained by identifying the $i$th edge of $gP$ with the $i$th edge of $R_igP$. 
The \emph{unfolding} of $P$ is $U(P)=S_g(P)/\tau_n(P)$ while the \emph{generic unfolding} of $P$ is $U_g(P)=S_g(P)/\tau_n$,
with $\tau_n$ and $\tau_n(P)$ acting on $S_g(P)$ in the obvious way.
\end{defns}

The unfolding $U(P)$ contains the unfolding of $P$ along any trajectory.
Since copies of $P$ are identified in $U(P)$ iff they differ by a translation, a trajectory on $P$ is periodic iff its lift to $U(P)$ is closed. 
If $f$ lifts further to a closed path in $U_g(P)$, then Construction~\ref{con:new trajectory} 
gives us a closed path in $U_g(Q)$ for any $Q$ sufficiently close to $P$.
Since we have a covering $U_g(Q)\to U(Q)$, this gives us a closed path in $U(Q)$ for all $Q$ sufficiently near $P$, i.e. a periodic trajectory. 
Thus if the lift of $f$ to $U_g(P)$ is closed, $f$ is stable.
In fact, the following lemma shows that for generic $P$ we have $U_g(P)=U(P)$, 
motivating the term ``generic'' and showing that the converse also holds.

\begin{lem}\label{lem:lin ind}
If the angles of $P$ are linearly independent over $\Z$, then $U(P)=U_g(P)$ and hence every periodic trajectory on $P$ is stable.
\end{lem}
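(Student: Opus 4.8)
The plan is to reduce the geometric claim $U(P)=U_g(P)$ to the group-theoretic identity $\tau_n(P)=\tau_n$ inside $G_n$, and then to prove that identity by computing the linear parts of the words that generate $G_n$. Since $\mathrm{eval}_P'\circ D$ is a homomorphism, the inclusion $\tau_n=\ker D\subseteq\ker(\mathrm{eval}_P'\circ D)=\tau_n(P)$ is automatic: if $Dw=\id$ then $\mathrm{eval}_P'(Dw)=\id$ as well. Both groups act on the single space $S_g(P)$ through $\mathrm{eval}_P$, so an equality of subgroups $\tau_n(P)=\tau_n$ makes the two equivalence relations on $S_g(P)$ agree and hence forces $U(P)=S_g(P)/\tau_n(P)$ and $U_g(P)=S_g(P)/\tau_n$ to be literally the same space. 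It therefore suffices to establish the reverse inclusion $\tau_n(P)\subseteq\tau_n$.

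For that I would analyze the linear part $Dw$ of a word $w=R_{i_m}\cdots R_{i_1}$. Writing $\vp_i$ for the direction of the $i$th edge, the derivative of reflection across that edge is $z\mapsto e^{2i\vp_i}\ov z$ in complex coordinates; composing, $Dw$ is a reflection when $m$ is odd and a rotation by $\theta(w)=2\sum_j(-1)^{m-j}\vp_{i_j}$ when $m$ is even. This alternating sum has equally many positive and negative terms, so any common rotation of all the $\vp_i$ cancels, and expressing each edge direction as an integer-affine combination of the interior angles $\alpha_1,\dots,\alpha_n$ and $\pi$ gives $\theta(w)=\sum_{k=1}^n c_k\alpha_k+c_0\pi$ with $c_k,c_0\in\Z$ depending only on $w$. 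Consequently $\mathrm{eval}_P'(Dw)=\id$ exactly when $m$ is even (a reflection is never trivial) and $\sum_k c_k\alpha_k+c_0\pi\equiv 0\pmod{2\pi}$ for the actual angles of $P$.

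To close the argument I would read the hypothesis as saying that the only $\Z$-linear relation among $\alpha_1,\dots,\alpha_n,\pi$ is generated by the angle-sum relation $\sum_k\alpha_k-(n-2)\pi=0$. Under this assumption the congruence above forces the coefficient vector $(c_1,\dots,c_n)$ to be constant, say $(t,\dots,t)$, with $c_0\equiv(n-2)t\pmod 2$; and these are precisely the $w$ for which $\theta(w)$ reduces to a multiple of $2\pi$ using only the angle-sum relation, i.e. for which $Dw=\id$ holds identically in $DG_n$. Thus $w\in\tau_n$, giving $\tau_n(P)\subseteq\tau_n$ and hence $U(P)=U_g(P)$. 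The stability statement then follows from the discussion preceding the lemma: a periodic trajectory $f$ lifts to a closed path in $U(P)=U_g(P)$, and a closed lift to $U_g(P)$ yields, via Construction~\ref{con:new trajectory}, a periodic trajectory on every nearby polygon, so $f$ is stable.

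I expect the main obstacle to lie in the last step, namely in matching the two descriptions of triviality for $Dw$: the evaluated condition ``$\theta(w)\equiv0\pmod{2\pi}$ at the angles of $P$'' must be shown to coincide exactly, under the independence hypothesis, with the formal condition ``$\theta(w)\equiv0\pmod{2\pi}$ for all angles constrained only by the angle-sum relation'' that defines membership in $\tau_n=\ker D$. Keeping the reduction modulo $2\pi$ and the even/odd (rotation versus reflection) dichotomy aligned across these two pictures is the delicate bookkeeping on which the proof rests.
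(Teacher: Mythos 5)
Your proposal is correct and follows essentially the same route as the paper: reduce $U(P)=U_g(P)$ to the group-theoretic identity $\tau_n(P)=\tau_n$, express the linear part of an even word as a rotation by a $\Z$-combination of the angles (and $\pi$), and invoke the independence hypothesis to force that rotation to be trivial identically. The only differences are bookkeeping ones---the paper factors rotations into vertex rotations $D(R_iR_{i+1})^{\pm 1}$ by $\pm 2\alpha_i$ where you work with edge directions, and your explicit handling of the coefficient of $\pi$ modulo $2\pi$ via the angle-sum relation spells out a step the paper's terse ``hence by independence'' glosses over.
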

\begin{proof}
It suffices to show that $\tau_n(P)=\tau_n$.
Let $\alpha_i$ be the angles of $P$. If $\tau_n(P)\ne \tau_n$, we have some $g\in G_n$ such that $\mathrm{eval}'(Dg)=\id$ but $Dg\ne \id$.
Then $Dg$ cannot be a reflection, so it must be a rotation, hence we can write $Dg$ as a product of rotations of the form
$D(R_iR_{i+1})^{\pm 1}$, which are rotations by $\pm 2\alpha_i$. Since $\mathrm{eval}'(Dg)=0$ we have a linear combination of these rotations
which is trivial, hence by independence $Dg$ is the trivial rotation, a contradiction.
\end{proof}

\begin{cor}
A trajectory on $P$ is stable iff its lift to $U_g(P)$ is closed.
\end{cor}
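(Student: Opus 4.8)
The plan is to prove both directions by translating the topological condition ``closed in $U_g(P)$'' into a purely algebraic condition on a single element of $G_n$. The forward implication is already established in the discussion preceding Lemma~\ref{lem:lin ind}: if the lift of $f$ to $U_g(P)$ is closed, then Construction~\ref{con:new trajectory} and the covering $U_g(Q)\to U(Q)$ produce a periodic trajectory on every nearby $Q$, so $f$ is stable. It remains to establish the converse, and for that I would first set up a dictionary between closedness and the subgroups $\tau_n,\tau_n(P)$.

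The key step is the following observation. Reading off the successive edges that $f$ crosses yields a word $h\in G_n$ in the generators $R_i$, and the two copies of $P$ containing the endpoints of $f$ in $S_g(P)$ are $g_0P$ and $hg_0P$ for some $g_0$. Closedness is then governed entirely by $h$ together with which translation subgroup we quotient by: $f$ is closed in $U(P)=S_g(P)/\tau_n(P)$ iff $h\in\tau_n(P)$, that is iff $\mathrm{eval}_P'(Dh)=\id$, while $f$ is closed in $U_g(P)=S_g(P)/\tau_n$ iff $h\in\tau_n$, that is iff $Dh=\id$. Since $\tau_n\subseteq\tau_n(P)$, the generic condition is a priori the stronger one.

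For the converse, suppose $f$ is stable. Because Construction~\ref{con:new trajectory} replaces each copy of $P$ by a copy of $Q$ without disturbing the combinatorial sequence of edges crossed, the transported path on $Q$ is recorded by the \emph{same} word $h\in G_n$. Stability says this path is a periodic trajectory, hence closed in $U(Q)$, for every $Q$ sufficiently near $P$; by the dictionary this means $h\in\tau_n(Q)$, i.e. $\mathrm{eval}_Q'(Dh)=\id$, for all such $Q$. I would then invoke density: $n$-gons whose angles are linearly independent over $\Z$ form a dense set, so some such $Q$ lies arbitrarily close to $P$. For that $Q$, Lemma~\ref{lem:lin ind} gives $\tau_n(Q)=\tau_n$, whence $h\in\tau_n$, i.e. $Dh=\id$. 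This is exactly the condition that the lift of $f$ to $U_g(P)$ is closed, which completes the converse.

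The main obstacle is the invariance asserted in the key step: one must check carefully that ``the two endpoint copies are identified in the relevant quotient'' depends only on the abstract word $h$ and not on the ambient polygon, and in particular that Construction~\ref{con:new trajectory} preserves $h$. Once this invariance is in hand, the remainder is a short deformation argument that pushes the problem onto a generic nearby polygon where $U(Q)$ and $U_g(Q)$ coincide.
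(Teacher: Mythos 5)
Your proof is correct and is essentially the paper's own argument made explicit: the forward direction is exactly the discussion preceding Lemma~\ref{lem:lin ind}, and your converse (stability forces closedness in $U(Q)=U_g(Q)$ for a nearby polygon $Q$ with $\Z$-independent angles, which transfers back to $P$ because closedness in the generic unfolding is the polygon-independent condition $h\in\tau_n$) is precisely what the paper intends when it says Lemma~\ref{lem:lin ind} shows ``the converse also holds.'' The invariance of the word $h$ under Construction~\ref{con:new trajectory}, which you rightly flag as the point needing care, is built into that construction by definition, so there is no gap.
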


A closer analysis also gives us the following well-known result.

\begin{cor}\label{cor:not open}
If $\mathcal O$ is not open, the set of angles of polygons in $\mathcal O$ satisfies some integer relation.
\end{cor}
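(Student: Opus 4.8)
The plan is to read the corollary directly off the word-level description of periodicity developed above, combined with the rotation computation already carried out for Lemma~\ref{lem:lin ind}. First I would fix the trajectory $f$ whose orbit tile is $\mathcal O$ and extract the combinatorial datum it determines: the finite sequence of edges struck by $f$ yields a word $w=R_{i_k}\cdots R_{i_1}\in G_n$, namely the deck element relating the starting and terminal copies of the polygon in $S_g$. The essential point is that $w$ is intrinsic to $f$ and is independent of which polygon we realize $f$ on, since Construction~\ref{con:new trajectory} replaces copies of one $n$-gon by another without altering this combinatorial structure. Consequently, for every $Q\in\mathcal O$, periodicity of $f$ on $Q$ means exactly that the lift closes in $U(Q)=S_g(Q)/\tau_n(Q)$, that is, $w\in\tau_n(Q)$, equivalently $\mathrm{eval}_Q'(Dw)=\id$.

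Next I would split on the value of $Dw$. If $Dw=\id$, then $w\in\tau_n$, so the lift of $f$ already closes in $U_g(Q)$ for every $Q\in\mathcal O$; by the criterion that a trajectory is stable iff its lift to $U_g$ is closed, $f$ is stable at every point of $\mathcal O$, and hence $\mathcal O$ is open. This is the branch that must be excluded. If instead $Dw\ne\id$, then, exactly as in the proof of Lemma~\ref{lem:lin ind}, $Dw$ is a nontrivial rotation, expressible as a product of the basic rotations $D(R_iR_{i+1})^{\pm1}$ by $\pm 2\alpha_i$; writing its total angle as $\sum_i 2m_i\alpha_i$ with integers $m_i$ not all zero, the periodicity condition $\mathrm{eval}_Q'(Dw)=\id$ becomes $\sum_i m_i\alpha_i\in\pi\Z$. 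This is a genuine integer relation among the angles, and it is the same relation for every $Q\in\mathcal O$ because $w$ is fixed.

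Taking the contrapositive of the first branch then yields the corollary: if $\mathcal O$ is not open, we must be in the second case, so $Dw$ is a nontrivial rotation and every polygon in $\mathcal O$ satisfies the fixed integer relation $\sum_i m_i\alpha_i\in\pi\Z$.

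I expect the main obstacle to be the two bookkeeping points that make this dichotomy rigorous. The first is verifying that $w$ really is constant across $\mathcal O$, so that one fixed relation serves all polygons at once; this rests on the combinatorial type of $f$ being preserved by Construction~\ref{con:new trajectory}, together with a careful identification of when two copies in $S_g(Q)$ differ by a translation (i.e. that $w\in\tau_n(Q)$ captures periodicity and $w\in\tau_n$ captures stability). The second is confirming that $Dw\ne\id$ forces the coefficients $m_i$ to be not all zero, so that the resulting relation is nondegenerate; this is precisely the content of the computation behind Lemma~\ref{lem:lin ind}, using that the generic angles defining $G_n$ are independent. The real work is therefore in the former point, pinning down that passing to a nearby polygon via Construction~\ref{con:new trajectory} leaves $w\in G_n$ unchanged.
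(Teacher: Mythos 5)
Your proposal is correct and follows essentially the same route as the paper's proof: both fix the word $w$ of reflections determined by $f$, observe that periodicity on each $Q\in\mathcal O$ forces $w\in\tau_n(Q)$ while non-openness (i.e.\ instability) forces $Dw\ne\id$, and then extract the integer relation by writing $Dw$ as a product of rotations by $\pm 2\alpha_i$. The only difference is presentational: you recast the paper's ``$w\in\tau_n(P_0)$ but $w\notin\tau_n$'' step as a contrapositive dichotomy on $Dw$ (and note that nondegeneracy of the relation follows most directly from $Dw\ne\id$ itself, i.e.\ from the relation failing for a generic polygon, rather than from the coefficients being not all zero).
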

\begin{proof}
Let $P_0\in \mathcal O$ and let $f$ be the trajectory which defines $\mathcal O$. 
Then $f$ must lift to a closed path on $U(P_0)$, so the sequence of reflections
across edges along a period of $f$ lies in $\tau_n(P_0)$, but not in $\tau_n$ as $f$ is not stable.
This sequence must have even length, so we can write it as a product of rotations by $\pm 2\alpha_i$, which must satisfy some integer relation.
For $f$ to be closed on $U(P)$, this sequence of reflections must also lie in $\tau_n(P)$, so this relation must also be satisfied by $P$.
\end{proof}

To relate this to the Stability Lemma, we consider the covering $p:U_g(P)\to \mc DP$ obtained by letting $H_n\subset G_n$ be the (index-$2$) subgroup of orientation-preserving affine maps
and identifying $\mc DP$ with $U_g(P)/H_n$.

\begin{proof}[Proof of Stability Lemma]
Recall that $H_1(\mc DP,\Z)$ is the abelianization of $\pi_1(\mc DP)$, so a closed curve is null-homologous iff its homotopy class
is in the commutator subgroup of $\pi_1(\mc DP)$. Thus in order to prove the lemma it suffices to show
that the image of the induced map $p^*:\pi_1(U_g(P))\to \pi_1(\mc DP)$ is $[\pi_1(\mc DP),\pi_1(\mc PD)]$. Since $S_g(P)$ is simply connected, 
by basic covering space theory the following diagram commutes:
\tikzexternaldisable
\[\begin{tikzcd}
    \pi_1(U_g(P)) \rar{p^*} \dar & \pi_1(\mc DP) \dar\\
    \tau_n\rar{\iota} & H_n
\end{tikzcd}\]
\tikzexternalenable
where the vertical arrows are isomorphisms and $\iota:\tau_n\to H_n$ is the inclusion map. Since the commutator subgroup is a verbal subgroup, it suffices to show that $\tau_n=[H_n,H_n]$.
The inclusion $[H_n,H_n]\subseteq \tau_n$ follows from the fact that $H_n/\tau_n$ is the group of linear parts of elements in $H_n$, which are rotations in the plane and hence commute.
For the reverse inclusion, suppose $[H_n,H_n]\subsetneq \tau_n$, so $H_n/[H_n,H_n]$ is a proper quotient of $H_n/\tau_n$. Note that the elements $R_1R_i$ for $i\ge 2$ generate $H_n$, as $H_n$ consists
of the words in $R_1,\ldots,R_n$ of even length and any $R_iR_j$ with $i,j\ne 1$ is given by $R_iR_j=(R_1R_i)^{-1}(R_1R_j)$. If we let $\theta_i$ be the angle of the $i$th edge of a
generic $n$-gon, then the only linear relation over $\Q$ between the $\theta_i$ and $\pi$ is $\theta_1+\cdots+\theta_n=(n-2)\pi$. Since the image of $R_1R_i$ in $H_n/\tau_n$
is a rotation by $\theta_1-\theta_i$ and the set $\theta_1-\theta_2,\ldots,\theta_1-\theta_n,\pi$ is linearly independent over $\Q$, $H_n/\tau_n$ is a free abelian group on $n-1$ generators, i.e.
$H_n/\tau_n\cong \Z^{n-1}$. But $H_n/[H_n,H_n]\cong H_1(\mc DP,\Z)$, and since $\mc DP$ is an $n$-punctured sphere this is isomorphic to $\Z^{n-1}$. Since $\Z^{n-1}$ is not isomorphic to a proper quotient
of itself, we have a contradiction. Thus $[H_n,H_n]=\tau_n$ so the image of $p^*$ is $[\pi_1(\mc DP),\pi_1(\mc DP)]$ as desired.
\end{proof}

\section{Stability and Saddle Connections}

In order to fully characterize the behavior of periodic trajectories near a polygon $P$, we need to study trajectories which exist on polygons arbitrarily close to $P$ but not
on $P$ itself. In these cases, Construction~\ref{con:new trajectory} gives rise to saddle connections on $P$.

Note that neither Construction~\ref{con:new trajectory} nor the proof of the Stability Lemma require $f$ to be a trajectory; in fact they apply to any closed path.
Thus, if we deform a saddle connection by ``going around'' the vertices with semicircles, we can determine 
whether it arises from a stable periodic trajectory on a nearby polygon.

\begin{con}\label{con:deform}
Let $f$ be a stable periodic trajectory on a polygon $P$. 
If $Q$ is a polygon such that applying Construction~\ref{con:new trajectory} to $f$ gives rise to a saddle connection $g$ on $Q$, let
$t_1,\ldots,t_k$ be the times at which $g$ strikes a vertex. For each $1\leq i\leq k$, if the corresponding vertex of $P$ lies to the left of $f$
then we define $\mathring g(t)$ on the interval $[t_i-\epsilon,t_i+\epsilon]$ to be a counter-clockwise semicircle from 
$g(t_i-\epsilon)$ to $g(t_i+\epsilon)$.
If it lies to the right of $f$, we use a clockwise semicircle instead. For all other values of $t$, 
define $\mathring g(t)=g(t)$.
\end{con}

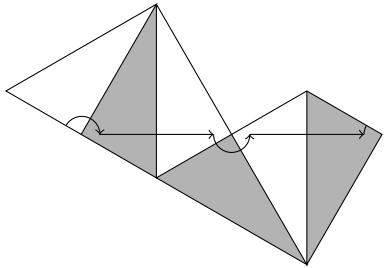
\begin{figure}
\begin{tikzpicture}[scale=2]
\draw (0,0) -- (.5,.866) -- (-.5,.289) -- (.5,-.289) -- (.5,.866);
\fill[opacity = .3] (0,0) -- (.5,-.289) -- (.5,.866) -- cycle;
\draw (.5,.866) -- (1,0) -- (.5,-.289) -- (1.5,-.866) -- (1,0);
\fill[opacity = .3] (.5,-.289) -- (1,0) -- (1.5,-.866) -- cycle;
\draw (1.5,.289) -- (1.5,-.866) -- (2,0) -- (1.5,.289) -- (1,0);
\fill[opacity = .3] (1.5,.289) -- (1.5,-.866) -- (2,0) -- cycle;
\draw[->] (-.1,.06) arc (150:0:.12cm);
\draw[->] (.12,0) -- (.88,0);
\draw[->] (.88,0) arc (180:360:.12cm);
\draw[->] (1.12,0) -- (1.88,0);
\draw (1.88,0) arc (180:150:.12cm);
\end{tikzpicture}
\caption{Construction~\ref{con:deform} applied to the Fagnano orbit on the 30-60-90 triangle.}
\label{fig:deform}
\end{figure}

While $\mathring g$ depends on the choice of trajectory $f$, the corresponding trajectory is usually clear from context, 
so our notation suppresses this dependence.

\begin{rem}
When applying Construction~\ref{con:new trajectory} to a trajectory $f$ gives rise to a saddle connection $g$, 
$g$ often traces out its period several times over the period of $f$. When Construction~\ref{con:deform} is applied this discrepancy disappears,
as different periods of $g$ may have semicircles introduced in different directions.
\end{rem} 

Note that when the vertex is integrable, the other choice of semicircle would result in a different closed path,
while if the vertex is a cone point the other choice would not result in a closed path at all.
The choice of clockwise or counter-clockwise semicircles forces $\mathring g$ to travel along the same sequence of polygons as $f$.
Figure~\ref{fig:deform} illustrates this construction applied to the Fagnano orbit.
Note that while the Fagnano orbit degenerates to
a saddle connection on any right triangle, applying Construction~\ref{con:deform} allows us to avoid the vertices.

Combining Construction~\ref{con:deform} with the Stability Lemma, we get the following necessary condition for a saddle connection to be the result of applying Construction~\ref{con:new trajectory}
to a stable periodic trajectory.

\begin{lem}\label{lem:from stable}
Let $f$ be a periodic trajectory, and $g$ a saddle connection obtained from $f$ by applying Construction~\ref{con:new trajectory}. 
Then $f$ is stable iff $p(\mathring g)$ is null-homologous.
\end{lem}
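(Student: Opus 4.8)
The plan is to reduce Lemma~\ref{lem:from stable} to the Stability Lemma by showing that the deformed path $\mathring g$ is, up to homotopy in $U_g(P)$, precisely the lift of the stable trajectory $f$ we began with. The essential point is that Construction~\ref{con:deform} was rigged so that $\mathring g$ traverses exactly the same sequence of reflected polygons as $f$; the semicircles merely detour around the punctures without changing which copies of $P$ the path passes through. Thus the first step is to argue that $\mathring g$ and $f$ lift to freely homotopic loops in $U_g(P)$, from which $p(\mathring g)$ and $p(f)$ are freely homotopic in $\mc DP$, and in particular one is null-homologous iff the other is.

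First I would fix the setup carefully: by Construction~\ref{con:new trajectory}, $g$ lives in $U'=U_g(Q)$, which is combinatorially identified with $U_g(P)$ since replacing each copy of $Q$ by $P$ is a homeomorphism respecting the edge-identifications defining $\tau_n$ (the generic unfolding depends only on the combinatorics of the reflection group $G_n$, not on the metric shape of the tile). Under this identification $\mathring g$ becomes a genuine closed path in $U_g(P)$, because the semicircles were chosen (clockwise versus counter-clockwise according to whether the vertex lies to the right or left of $f$) exactly so as to keep $\mathring g$ inside the sequence of polygons determined by $f$ rather than escaping into a neighboring copy. This is the content of the sentence ``The choice of clockwise or counter-clockwise semicircles forces $\mathring g$ to travel along the same sequence of polygons as $f$,'' and I would turn that remark into a precise homotopy by contracting each semicircle radially toward the puncture it encircles and then sliding it onto the corresponding segment of $f$.

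Next I would invoke the Stability Lemma directly. Since $f$ is a periodic trajectory on $P$, the lemma says $f$ is stable iff its lift to $\mc DP$ is null-homologous, i.e. iff $p(f)$ is null-homologous in $H_1(\mc DP,\Z)$. Having established that $p(\mathring g)$ is freely homotopic to $p(f)$ in $\mc DP$, their homology classes coincide, so $p(\mathring g)$ is null-homologous iff $p(f)$ is, which is iff $f$ is stable. Here I would be slightly careful that free homotopy suffices: homology classes of loops are invariant under free homotopy (conjugation in $\pi_1$ becomes equality in the abelianization), so no basepoint bookkeeping is needed.

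The main obstacle is the homotopy argument of the second paragraph, specifically verifying that the semicircle convention genuinely keeps $\mathring g$ within the copies of $P$ visited by $f$ and does not introduce a winding number around a puncture that $f$ does not also have. At an integrable vertex (total cone angle an even multiple of the polygon's angle) both semicircle orientations yield closed paths but in distinct homotopy classes, so choosing the wrong orientation would change the homology class of $p(\mathring g)$; I must confirm the left/right rule in Construction~\ref{con:deform} selects the orientation matching the direction from which $f$ approaches the vertex. At a cone point the wrong choice fails to close up at all, so there the constraint is even more rigid. I expect the cleanest way to nail this down is to observe that $f$ passes the vertex on a definite side (left or right), that the straight segment of $f$ and the semicircle of $\mathring g$ together bound a small disk containing no other puncture, and hence cobound in $\mc DP$, giving the required homology.
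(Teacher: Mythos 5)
Your overall strategy is the one the paper intends: the paper gives no separate proof of Lemma~\ref{lem:from stable}, asserting that it follows by combining Construction~\ref{con:deform} with the Stability Lemma, the key point being precisely the sentence you quote about $\mathring g$ traversing the same sequence of polygons as $f$. However, your execution contains a genuine circularity. You assert that under the combinatorial identification $U_g(Q)\cong U_g(P)$ the path $\mathring g$ ``becomes a genuine closed path in $U_g(P)$,'' and you then propose to show that $\mathring g$ and $f$ \emph{lift to freely homotopic loops} in $U_g(P)$. But whether these lifts are loops at all is exactly what is at stake: by the covering space argument in the proof of the Stability Lemma (equivalently, the Corollary following Lemma~\ref{lem:lin ind}), the lift of $f$ to $U_g(P)$ is closed iff $f$ is stable, and the lift of $\mathring g$ to $U_g(Q)$ is closed iff $p(\mathring g)$ is null-homologous --- the two sides of the equivalence being proven. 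The semicircle convention guarantees that $\mathring g$ follows the same sequence of polygons as $f$; it does \emph{not} guarantee closedness of the lift in the generic unfolding. For instance, if $f$ is an unstable periodic trajectory (which exist, e.g., on right triangles \cite{RightTri}) degenerating to a saddle connection $g$, neither lift closes up, and your first step cannot be carried out, so as written the argument assumes the conclusion in one direction of the iff.

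The repair is to run the homotopy downstairs rather than upstairs: show directly that the images of $f$ and $\mathring g$ in $\mc DP\cong \mc DQ$ (identified combinatorially, compatibly with the coverings $p$) are freely homotopic loops. This needs no closedness hypothesis, and your final paragraph already contains the correct local argument: at each vertex struck by $g$, the left/counter-clockwise, right/clockwise rule forces the semicircle of $\mathring g$ and the nearby straight segment of $f$ to pass on the \emph{same} side of the puncture, so they cobound a disk containing no puncture, while away from these vertices the two paths cross the same edges in the same order. Hence the two loops are freely homotopic, their classes in $H_1(\mc DP,\Z)$ agree, and the Stability Lemma finishes: $f$ is stable iff its image in $\mc DP$ is null-homologous iff $p(\mathring g)$ is null-homologous. (Equivalently, one may compare the lifts to $U_g(P)$ as \emph{paths} from a common starting point, homotopic rel endpoints, rather than as loops.) With that substitution your proof is complete and coincides with the argument the paper intends.
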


The major limitation of Lemma~\ref{lem:from stable} is that it does not give us a way to determine whether a
saddle connection arises from a trajectory via Construction~\ref{con:new trajectory}, or if so
tell us anything about the trajectory.
However, for certain triangles of interest to us we can give a partial answer to these questions.
These proofs are illustrated in Figure~\ref{fig:restriction}.

\begin{figure}
\begin{tikzpicture}[scale=2]
\draw (0.0000, 0.0000) -- (1.0000, 0.0000) -- (0.5000, 0.2887) -- cycle;
\draw (0.0000, 0.0000) -- (1.0000, 0.0000) -- (0.5000, -0.2887) -- cycle;
\fill[opacity = 0.30] (0.0000, 0.0000) -- (1.0000, 0.0000) -- (0.5000, -0.2887) -- cycle;
\draw (0.5000, -0.8660) -- (1.0000, 0.0000) -- (0.5000, -0.2887) -- cycle;
\draw (0.5000, -0.8660) -- (1.0000, 0.0000) -- (1.0000, -0.5774) -- cycle;
\fill[opacity = 0.30] (0.5000, -0.8660) -- (1.0000, 0.0000) -- (1.0000, -0.5774) -- cycle;
\draw (1.5000, -0.8660) -- (1.0000, 0.0000) -- (1.0000, -0.5774) -- cycle;
\draw (1.5000, -0.8660) -- (1.0000, 0.0000) -- (1.5000, -0.2887) -- cycle;
\fill[opacity = 0.30] (1.5000, -0.8660) -- (1.0000, 0.0000) -- (1.5000, -0.2887) -- cycle;
\draw (2.0000, -0.0000) -- (1.0000, 0.0000) -- (1.5000, -0.2887) -- cycle;
\draw (2.0000, -0.0000) -- (1.0000, 0.0000) -- (1.5000, 0.2887) -- cycle;
\fill[opacity = 0.30] (2.0000, -0.0000) -- (1.0000, 0.0000) -- (1.5000, 0.2887) -- cycle;
\draw (2.0000, -0.0000) -- (1.5000, 0.8660) -- (1.5000, 0.2887) -- cycle;
\draw (2.0000, -0.0000) -- (1.5000, 0.8660) -- (2.0000, 0.5774) -- cycle;
\fill[opacity = 0.30] (2.0000, -0.0000) -- (1.5000, 0.8660) -- (2.0000, 0.5774) -- cycle;
\draw (2.0000, -0.0000) -- (2.5000, 0.8660) -- (2.0000, 0.5774) -- cycle;
\draw (2.0000, -0.0000) -- (2.5000, 0.8660) -- (2.5000, 0.2887) -- cycle;
\fill[opacity = 0.30] (2.0000, -0.0000) -- (2.5000, 0.8660) -- (2.5000, 0.2887) -- cycle;
\draw (2.0000, -0.0000) -- (3.0000, -0.0000) -- (2.5000, 0.2887) -- cycle;
\draw (2.5000, 0.8660) -- (3.0000, -0.0000) -- (2.5000, 0.2887) -- cycle;
\fill[opacity = 0.30] (2.5000, 0.8660) -- (3.0000, -0.0000) -- (2.5000, 0.2887) -- cycle;
\draw (2.5000, 0.8660) -- (3.0000, -0.0000) -- (3.0000, 0.5774) -- cycle;
\draw (3.5000, 0.8660) -- (3.0000, -0.0000) -- (3.0000, 0.5774) -- cycle;
\fill[opacity = 0.30] (3.5000, 0.8660) -- (3.0000, -0.0000) -- (3.0000, 0.5774) -- cycle;
\draw (3.5000, 0.8660) -- (3.0000, -0.0000) -- (3.5000, 0.2887) -- cycle;
\draw (4.0000, 0.0000) -- (3.0000, -0.0000) -- (3.5000, 0.2887) -- cycle;
\fill[opacity = 0.30] (4.0000, 0.0000) -- (3.0000, -0.0000) -- (3.5000, 0.2887) -- cycle;
\draw (4.0000, 0.0000) -- (3.0000, -0.0000) -- (3.5000, -0.2887) -- cycle;
\draw (4.0000, 0.0000) -- (3.5000, -0.8660) -- (3.5000, -0.2887) -- cycle;
\fill[opacity = 0.30] (4.0000, 0.0000) -- (3.5000, -0.8660) -- (3.5000, -0.2887) -- cycle;
\draw (4.0000, 0.0000) -- (3.5000, -0.8660) -- (4.0000, -0.5774) -- cycle;
\draw (4.0000, 0.0000) -- (4.5000, -0.8660) -- (4.0000, -0.5774) -- cycle;
\fill[opacity = 0.30] (4.0000, 0.0000) -- (4.5000, -0.8660) -- (4.0000, -0.5774) -- cycle;
\draw (4.0000, 0.0000) -- (4.5000, -0.8660) -- (4.5000, -0.2887) -- cycle;
\draw (4.0000, 0.0000) -- (5.0000, 0.0000) -- (4.5000, -0.2887) -- cycle;
\fill[opacity = 0.30] (4.0000, 0.0000) -- (5.0000, 0.0000) -- (4.5000, -0.2887) -- cycle;
\draw (4.0000, 0.0000) -- (5.0000, 0.0000) -- (4.5000, 0.2887) -- cycle;

\draw[very thick] (1, 0) -- (2, 0) node[midway, below, font=\tiny, inner sep=2pt] {$L_1$};
\draw[very thick] (2, 0) -- (3, 0) node[midway, above, font=\tiny, inner sep=2pt] {$L_2$};
\draw[very thick] (3, 0) -- (4, 0) node[midway, below, font=\tiny, inner sep=2pt] {$L_3$};
\node[font=\small] at (1,.08) {$v_1$}; 
\node[font=\small] at (2,-.08) {$v_2$}; 
\node[font=\small] at (3,-.08) {$v_3$}; 
\node[font=\small] at (4,.08) {$v_4$}; 
\end{tikzpicture}
\caption{An illustration of Lemma~\ref{lem:isos restriction}. 
The points $v_1$ and $v_4$ fall below $v_2$ and $v_3$ as we increase the isosceles angle.
Bisecting the triangles gives an illustration of Lemma~\ref{lem:right restriction}.}
\label{fig:restriction}
\end{figure}
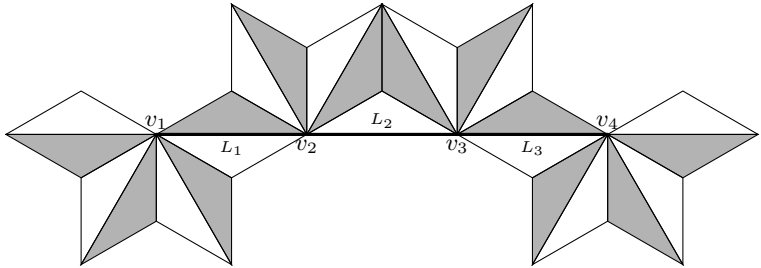

\begin{lem}\label{lem:right restriction}
Let $f$ be a periodic trajectory with orbit tile $\mathcal O$ and $T$ a right triangle.
Suppose applying Construction~\ref{con:new trajectory} to $f$ gives a saddle connection $g$ on $T$ with period more than one,
such that
\begin{itemize}
\item $g$ strikes the vertex with angle $\frac{\pi}{4k+2}\le \alpha\le \frac{\pi}{4k}$ exactly once in its period,
\item the only other vertex $g$ strikes, if any, is the right-angle vertex, and
\item $\mathring g$ includes semicircles in both directions around the vertex with angle $\alpha$.
\end{itemize}
Then for sufficiently small $\epsilon,\delta>0$, 
$\mathcal O\cap B_\delta(T)$ is disjoint from the ray emanating from $T$ with direction $\frac{7\pi}{4}\pm \epsilon$.
\end{lem}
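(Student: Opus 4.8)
The plan is to translate persistence of $f$ into a statement about a straight line avoiding vertices in a developed unfolding, and then to run a first-order perturbation analysis governed by the angle bound on $\alpha$. A triangle $Q$ near $T$ lies in $\mathcal{O}$ precisely when applying Construction~\ref{con:new trajectory} to $f$ on $Q$ produces a genuine periodic trajectory, i.e. a closed straight line in the unfolding meeting no vertex. Since $T$ itself lies on $\partial\mathcal{O}$ (the trajectory degenerates to $g$) and $\partial\mathcal{O}$ is piecewise analytic, it suffices to decide, for each direction in the stated cone, whether the degenerate line $g$ opens into a vertex-free line or is blocked. By Construction~\ref{con:deform} and Lemma~\ref{lem:from stable} the homological data is already fixed by the semicircle directions of $\mathring g$; what remains is the geometric question of which side of the opened line each struck vertex falls on.

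Next I would set up the developed picture of Figure~\ref{fig:restriction}. Develop the unfolding of $T$ along one $f$-period of $g$ as a strip in $\C$ with $g$ horizontal; the copies of the $\alpha$-vertex that $g$ strikes then lie exactly on this line, labelled $v_1,\dots,v_m$ in order. Because $\mathring g$ uses semicircles in both directions about the $\alpha$-vertex, the surviving trajectory is required to pass above some of the $v_j$ and below the others, in the pattern recorded by the semicircles. Perturbing $Q$ away from $T$ in a direction $\theta$ re-develops the strip: each $v_j$ acquires a signed vertical displacement $h_j(\theta)\,s + o(s)$, where the linear coefficient $h_j(\theta)$ is determined by the rotational holonomy accumulated along $g$ up to the $j$th strike (products of reflections whose linear parts are rotations by multiples of the angles, exactly as in the proof of the Stability Lemma). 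A short computation shows that the direction $\tfrac{7\pi}{4}$ is the perturbation fixing the right angle to first order: in the two acute angles its components satisfy $\cos\theta+\sin\theta=0$, so it trades the two acute angles while keeping $Q$ infinitesimally a right triangle, with the $\pm\epsilon$ cone supplying the robustness neighborhood.

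The crux is to show that along $\tfrac{7\pi}{4}\pm\epsilon$ the sign pattern of the $h_j$ is incompatible with the pattern demanded by the semicircles, so that no straight line can thread the displaced vertices and $Q\notin\mathcal{O}$. Here the hypothesis $\tfrac{\pi}{4k+2}\le\alpha\le\tfrac{\pi}{4k}$ enters decisively: inverting gives $\pi/\alpha\in[4k,4k+2]$, which pins down the number of triangle copies fanning around the $\alpha$-vertex that the line can traverse on each side, and hence the parity governing whether a given struck copy is displaced upward or downward. I expect this sign/parity bookkeeping --- verifying that increasing $\alpha$ (direction $\tfrac{7\pi}{4}$) drives the vertices flagged for one side to the opposite side, exactly as $v_1,v_4$ drop below $v_2,v_3$ in Figure~\ref{fig:restriction} --- to be the main obstacle, and the estimates must be strict so that the conclusion survives the $\pm\epsilon$ tilt and the $o(s)$ error over $B_\delta(T)$.

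Finally I would check that strikes at the right-angle vertex impose no competing constraint, since reflecting across its two edges contributes only rotations by $\pi$ and the opened path can be routed past such a vertex on either side without homological consequence, leaving the $\alpha$-vertex analysis as the sole obstruction. Assembling these pieces yields, for $\delta$ and then $\epsilon$ small, that every $Q$ in the cone of directions $\tfrac{7\pi}{4}\pm\epsilon$ fails to admit the opened trajectory, so $\mathcal{O}\cap B_\delta(T)$ is disjoint from that ray, as claimed.
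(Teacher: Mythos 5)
Your setup coincides with the paper's: develop the unfolding of $T$ along $f$, mark the struck copies $v_1,\dots,v_m$ of the $\alpha$-vertex on the line, observe that the semicircle pattern of $\mathring g$ dictates on which side of each $v_i$ a surviving trajectory must pass, and perturb along $\frac{7\pi}{4}$ (which in the $(\theta_1,\theta_2)$ parametrization keeps the third angle right \emph{exactly}, not merely to first order). But you stop at precisely the point where the lemma is actually proved: you yourself flag the ``sign/parity bookkeeping'' as ``the main obstacle'' and never carry it out, so what you have is a plan, not a proof. Worse, the bookkeeping as you frame it---signs of first-order vertical displacements $h_j(\theta)$ of the individual struck vertices---cannot suffice even in principle, because a candidate trajectory may translate and tilt: what obstructs it is not where each $v_j$ moves but the \emph{relative} configuration of the $v_j$, i.e.\ whether the chain becomes concave against the required crossing pattern. (If every $h_j$ were equal, all ``signs'' would agree, yet the trajectory would survive by translation.) No holonomy computation of individual displacements, and no parity rule extracted from $\pi/\alpha\in[4k,4k+2]$, is exhibited that yields the needed relative statement.

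The paper closes exactly this gap with a rigidity-plus-convexity argument. Since $g$ strikes the $\alpha$-vertex once per period and $\mathring g$ uses both directions, one may choose consecutive copies $v_1$ (counter-clockwise), $v_2,\dots,v_{j-1}$ (clockwise), $v_j$ (counter-clockwise). The bound $\frac{\pi}{4k+2}\le\alpha\le\frac{\pi}{4k}$ forces the fan of the unfolding around each $v_i$ to consist of exactly $2k+1$ isosceles triangles (two copies of $T$ each), and the stretch of unfolding between consecutive fans can differ only in the direction taken around right-angle vertices, which is immaterial when the right angle is held fixed; hence these stretches evolve \emph{identically} for all $i$. Consequently the segments $L_i=\overline{v_iv_{i+1}}$ all keep the same length, and their relative directions are governed solely by the fans: as $\alpha$ increases each fan opens, and because the fan wraps above the clockwise vertices, consecutive segments fold downward toward each other. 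Therefore the chord $\overline{v_1v_j}$ passes below every intermediate $v_i$, while a surviving straight trajectory would have to pass below $v_1$ and $v_j$ yet above each intermediate $v_i$---impossible, since a line below both endpoints lies below the chord between them. Smooth dependence of the $v_i$ on the triangle then extends this to directions $\frac{7\pi}{4}\pm\epsilon$ and to a ball $B_\delta(T)$. Without this rigidity step (or a worked-out substitute), your proposal does not establish the lemma.
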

\begin{proof}
Consider the unfolding of $T$ along $f$, embedded in $\C$ such that $f$ travels from left to right.
Let $v_1$ be one of the vertices with angle $\alpha$ such that $\mathring g$ travels counter-clockwise around $v_1$,
while $\mathring g$ travels clockwise around the same vertex $v_2$ in the next period of $g$.
Let $v_3,\ldots,v_j$ be the same vertex in the successive periods of $g$, with $v_j$ the first that $\mathring g$ travels counter-clockwise around.
Let $L_i$ be the line segment between $v_i$ and $v_{i+1}$.

Consider how the unfolding along $f$ changes as we increase the angle $\alpha$ while keeping the right angle constant, i.e. by moving
along the ray emanating from $T$ with direction $\frac{7\pi}{4}$. The unfolding about the $v_i$ consists of $2k+1$ isosceles triangles obtained
from gluing two copies of $T$ 
($f$ might only pass through one of the copies of $T$ in the first and last isosceles triangle, but adding the extra
triangles does not interfere). As $\alpha$ increases, the angle between the first and last isosceles triangle does as well.
Consider the subset of the unfolding starting from the last isosceles triangle around $v_i$ and ending at the first isosceles triangle around
$v_{i+1}$. This is the unfolding along a period of $g$ except for the first and last $k$ isosceles triangles.
Since it can only vary by the direction taken around a right-angle vertex, which is irrelevant
when the right angle is held constant, it evolves identically for all $i$. 
Thus $L_i$ and $L_{i+1}$ will be the same length and their relative direction is determined solely by the unfolding around $v_{i+1}$. 
For $i=2,\ldots,k-1$ the unfolding around $v_i$ wraps above $v_i$, so $L_{i-1}$ and $L_{i}$ fold downwards towards each other.
Thus the line from $v_1$ to $v_j$ passes below $v_i$ for $i=2,\ldots,j-1$. But $f$ passes below $v_1$ and $v_j$ and above $v_i$ for
$i=2,\ldots,j-1$, so $f$ cannot exist on the triangle when we increase $\alpha$.

Since the positions of the $v_i$ are smooth functions of the triangle, for rays sufficiently near the ray in the $\frac{7\pi}{4}$ direction
the result must hold as well.
\end{proof}

Given a trajectory or saddle connection on an isosceles triangle,
quotienting by the reflective symmetry produces a trajectory or saddle connection on the corresponding right triangle.
As a result, the following lemma holds for isosceles triangles.

\begin{lem}\label{lem:isos restriction}
Let $f$ be a periodic trajectory with orbit tile $\mathcal O$ and $T$ an isosceles triangle.
Suppose applying Construction~\ref{con:new trajectory} to $f$ gives a saddle connection $g$ on $T$.
If the corresponding saddle connection on a right triangle satisfies the hypothesis of Lemma~\ref{lem:right restriction},
then for sufficiently small $\epsilon,\delta>0$, 
$\mathcal O\cap B_\delta(T)$ is disjoint from the ray emanating from $T$ with direction $\frac{\pi}{4}\pm \epsilon$.
\end{lem}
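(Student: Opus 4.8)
The plan is to reduce the statement to Lemma~\ref{lem:right restriction} using the reflective symmetry of $T$. First I would bisect $T$ along its axis of symmetry to obtain the right triangle $T'$, whose angles are the right angle coming from the bisection, the half-apex angle $\tfrac12\angle A$, and one base angle $\gamma$. Treating the symmetry axis as a genuine reflecting edge, the unfolding of $T'$ along the quotient path is identified with the unfolding of $T$ along $f$, since reflecting $T'$ across that axis glues the two halves back into copies of $T$. Thus the reflection sends $f$ and $g$ to a periodic trajectory $\bar f$ and a saddle connection $\bar g$ on $T'$, and by hypothesis $\bar g$ meets the hypotheses of Lemma~\ref{lem:right restriction}. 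Moreover the closure condition of Construction~\ref{con:new trajectory} holds for $f$ on a nearby isosceles $Q$ exactly when it holds for $\bar f$ on the bisection $Q'$, so folding locally identifies similarity classes of isosceles triangles near $T$ with right triangles near $T'$ in a way that carries $\mathcal O$ restricted to the isosceles locus onto the orbit tile $\bar{\mathcal O}$ of $\bar f$.

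Next I would pin down the direction correspondence. Parametrizing isosceles triangles by their two base angles, the isosceles locus is the diagonal and the direction $\tfrac\pi4$ is the ray of increasing base angle $\gamma$; in the parametrization of Lemma~\ref{lem:right restriction}, increasing $\alpha$ with the right angle fixed is the direction $\tfrac{7\pi}4$. The struck vertex of $T'$ must be the base-angle vertex, i.e.\ $\alpha=\gamma$: were it instead the half-apex vertex $\alpha=\tfrac12\angle A$, then increasing $\alpha$ would \emph{decrease} $\gamma$ and the relevant isosceles ray would be $\tfrac{5\pi}4$ rather than $\tfrac\pi4$, contradicting the stated conclusion. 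With $\alpha=\gamma$, increasing $\gamma$ moves the isosceles triangle along its $\tfrac\pi4$-ray and the bisection along its $\tfrac{7\pi}4$-ray, so folding carries one ray onto the other. Applying Lemma~\ref{lem:right restriction} to $\bar f,\bar g,T'$ then shows that $\bar{\mathcal O}$ avoids, near $T'$, the ray in direction $\tfrac{7\pi}4$, and transferring back gives that $\mathcal O$ avoids the isosceles $\tfrac\pi4$-ray near $T$.

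The one point that needs genuine care is the perturbed directions $\tfrac\pi4\pm\epsilon$, which leave the isosceles locus, where the folding correspondence is no longer defined; I expect this to be the main obstacle. I would handle it exactly as Lemma~\ref{lem:right restriction} handles $\tfrac{7\pi}4\pm\epsilon$. Since the unfolding of $T$ along $f$ is the double of the right-triangle unfolding, it exhibits the same configuration of base-angle vertices $v_i$ and segments $L_i$, so the geometric argument—that the $v_i$ fall below the line through $v_1$ and $v_j$ while $f$ passes above them, obstructing $f$—applies verbatim in the isosceles unfolding to give the exact $\tfrac\pi4$ direction. I would then observe that the positions of the $v_i$ are smooth (indeed analytic) functions of the triangle in the full two-dimensional space, so this open obstruction persists for all directions sufficiently near $\tfrac\pi4$, yielding the disjointness from the $\tfrac\pi4\pm\epsilon$ ray for small $\epsilon,\delta$.
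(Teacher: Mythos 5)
Your proposal is correct and follows essentially the paper's own route: the paper justifies this lemma precisely by the bisection argument, observing that quotienting by the reflective symmetry turns $f$ and $g$ into a trajectory and saddle connection on the corresponding right triangle to which Lemma~\ref{lem:right restriction} applies, with the $\frac{\pi}{4}$ ray of isosceles triangles corresponding to the $\frac{7\pi}{4}$ ray of right triangles. Your additional care about the perturbed directions $\frac{\pi}{4}\pm\epsilon$---rerunning the vertex-configuration argument in the doubled unfolding and then invoking smoothness of the vertex positions in the full two-parameter family---spells out what the paper leaves implicit, since its illustration of the $v_i$ and $L_i$ is drawn in the isosceles unfolding and the smoothness step is inherited from the end of the proof of Lemma~\ref{lem:right restriction}.
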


\section{Near the 30-60-90 Triangle}

To interpret a neighborhood of the 30-60-90 triangle, we must fix a parametrization of the space of triangles up to similarity. 
We parameterize the triangles by two angles between $0$ and $\pi/2$.
In addition, we label the vertices and edges of the triangle. This is illustrated in Figure~\ref{labeling}.

\begin{figure}
\begin{tikzpicture}[scale=2,font=\small]
\draw (0,0) -- (1,0) node[midway, below] {$e_1$} -- (0,1.732) node[midway, right] {$e_3$} -- (0,0) node[midway, left] {$e_2$};
\draw (0,1.432) arc (-90:-60:.3cm);
\node at (.12,1.353) {$\theta_1$};
\draw (.8,0) arc (180:120:.2cm);
\node at (.74,.15) {$\theta_2$};
\node at (-.04,-.08) {$v_3$};
\node at (1.08,-.08) {$v_2$};
\node at (0,1.8) {$v_1$};
\end{tikzpicture}
\caption{The vertices, edges and angles of a triangle are labeled as shown. The angles $\theta_1=\pi/6$ and $\theta_2=\pi/3$ correspond to the 30-60-90 triangle.}
\label{labeling}
\end{figure}
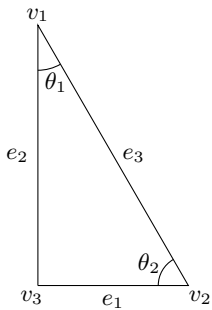

Using Lemma~\ref{lem:from stable}, we classify the orbit tiles which contain the 30-60-90 triangle $T$ in their closure.
As a corollary, we prove:
\begin{thm}\label{no cover}
Let $\mathcal O$ be an open orbit tile such that $T\in \overline{\mathcal O}$.
For any sufficiently small $\epsilon,\delta>0$, the ray emanating from $T$ with angle $\frac{7\pi}{4}-\epsilon$ is disjoint from 
$\mathcal O\cap B_\delta(T)$.
\end{thm}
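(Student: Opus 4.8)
The plan is to exhibit Theorem~\ref{no cover} as an immediate consequence of Lemma~\ref{lem:right restriction}, applied at the $30$-degree vertex $v_1$ of $T$. First note that in the $(\theta_1,\theta_2)$-parametrization of Figure~\ref{labeling} the ray of direction $\frac{7\pi}{4}$ is precisely the locus along which the angle $\theta_1$ at $v_1$ increases while the right angle $\theta_3$ at $v_3$ is held fixed, since $d\theta_1=-d\theta_2$ forces $d\theta_3=0$. Moreover $\pi/6=\frac{\pi}{4\cdot 1+2}$, so $v_1$ is eligible to be the angle-$\alpha$ vertex of Lemma~\ref{lem:right restriction} with $k=1$ and $\alpha=\pi/6$. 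Hence, once we check that the saddle connection produced by $\mathcal O$ satisfies the three hypotheses of that lemma, its conclusion yields disjointness of $\mathcal O\cap B_\delta(T)$ from the rays of direction $\frac{7\pi}{4}\pm\epsilon$, and the $\frac{7\pi}{4}-\epsilon$ ray of the theorem is the special case we want.

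The first step is to produce that saddle connection and record its stability. Since $\mathcal O$ is open, its defining trajectory $f$ is stable. Since $T$ is a right triangle, Hooper's theorem \cite{RightTri} forbids any stable periodic trajectory on $T$ itself; thus $T\notin\mathcal O$, and as $\mathcal O$ is open this places $T$ on $\partial\mathcal O$. Consequently Construction~\ref{con:new trajectory} applied to $f$ at $T$ does not return a trajectory but degenerates to a genuine saddle connection $g$ on $T$, of period greater than one because $f$ traverses $g$ several times. Stability of $f$ and Lemma~\ref{lem:from stable} then give that $p(\mathring g)$ is null-homologous in $\mathcal D T$, which for a triangle is a thrice-punctured sphere with $H_1(\mathcal D T,\Z)\cong\Z^2$ generated by loops $\ell_1,\ell_2,\ell_3$ about the three vertices subject to the single relation $\ell_1+\ell_2+\ell_3=0$.

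The core of the argument is to convert this null-homology into the combinatorial hypotheses of Lemma~\ref{lem:right restriction}. Each detour of $\mathring g$ around a copy of a vertex $v_i$ contributes to the class of $p(\mathring g)$ a multiple of $\ell_i$ whose magnitude is governed by the cone angle $2\theta_i$ of $\mathcal D T$ at that puncture; for the very sharp cone at $v_1$ (cone angle $\pi/3$) this contribution is rigid, while the right-angle puncture (cone angle $\pi$) behaves like an ordinary order-two point and the $60$-degree puncture has cone angle $2\pi/3$. Forcing the total winding about each puncture to vanish yields balance relations on the signed detours. The relation at $v_1$ says its detours must cancel in sign, so once we know that $g$ strikes $v_1$ at all, there must be semicircles of both orientations --- this is hypothesis (c); comparing the number of strikes at $v_1$ with the period of $g$ gives hypothesis (a); and the relation at $v_2$, read against the list of directions in which saddle connections occur, forces $g$ to avoid the $60$-degree vertex, which is hypothesis (b).

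The hard part is the geometric input that these last deductions rest on: one must know which saddle connections actually occur on the $30$-$60$-$90$ triangle and, for each, its pattern of vertex strikes, in order to confirm that a null-homologous $\mathring g$ must strike $v_1$ exactly once per period and never strike $v_2$. I would obtain this from the explicit flat structure of the unfolding of $T$ --- the translation surface tiled by reflected copies of $T$, whose saddle connections are already enumerated in the analysis of this triangle --- testing each candidate direction against Lemma~\ref{lem:from stable} to isolate exactly the stable cases. Once hypotheses (a)--(c) are verified, Lemma~\ref{lem:right restriction} applies verbatim and the theorem follows; the only residual bookkeeping is the degenerate possibility that $g$ has period one or strikes $v_1$ more than once within a single period, both of which the homology balance and the strike count are designed to exclude.
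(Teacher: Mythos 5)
Your reduction of Theorem~\ref{no cover} to Lemma~\ref{lem:right restriction} alone cannot work, and the gap is exactly in the step where you claim that null-homology of $p(\mathring g)$ forces $g$ to strike the $30$-degree vertex $v_1$ exactly once per period, avoid $v_2$, and use semicircles of both orientations at $v_1$. The Fagnano orbit is a counterexample to that claim: its orbit tile contains all acute triangles near $T$, so it is an open tile with $T$ in its closure, and on $T$ it degenerates to a saddle connection striking only the right-angle vertex $v_3$ (the orthic triangle collapses onto the altitude foot as the apex angle tends to $\pi/2$). By Lemma~\ref{lem:from stable} its $p(\mathring g)$ is indeed null-homologous, yet the first and third hypotheses of Lemma~\ref{lem:right restriction} fail outright because $g$ never strikes $v_1$ at all. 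Worse, the conclusion you extract (disjointness from \emph{both} rays of direction $\frac{7\pi}{4}\pm\epsilon$) is false for this tile: along the ray of direction $\frac{7\pi}{4}+\epsilon$ one has $d\theta_1+d\theta_2>0$, so the third angle drops below $\pi/2$ and the nearby triangles on that ray are acute, hence they all lie in the Fagnano tile. This is precisely why the theorem is stated only for the $\frac{7\pi}{4}-\epsilon$ ray, and any argument that proves the symmetric statement for every tile is proving something false.

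The paper's proof is therefore forced into a genuine case split, which your plan collapses. After enumerating saddle connections via the hexagonal translation structure of $U(T)$ and the group $G=\ker(\SL_2(\Z)\to\SL_2(\Z/2\Z))$, the paper reduces $\mathring f$ to repetitions of one of four model curves $s_1,\dots,s_4$. For $s_1$ and $s_4$ the homology computation does run exactly as you envision: the congruence forces $x=1$, hence strikes at $v_1$ with semicircles in both directions, and Lemma~\ref{lem:right restriction} finishes (Lemma~\ref{lem:case1}). But for $s_2$ and $s_3$ --- the Fagnano-type cases --- the homology only shows that $g$ strikes $v_3$, and the theorem is instead obtained from a separate geometric argument (Lemma~\ref{lem:case2}): the points $X,Y,Z$ in the unfolding remain collinear exactly along the locus of right triangles and separate with nonzero derivative as the third angle increases, so the tile contains only acute triangles near $T$ and in particular misses the (obtuse) $\frac{7\pi}{4}-\epsilon$ ray, while possibly meeting the $\frac{7\pi}{4}+\epsilon$ ray. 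Without an argument of this second kind, covering the saddle connections that strike only the right-angle vertex, your proof cannot be completed.
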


This is equivalent to Theorem~1.1 of Schwartz in \cite{Deg1001}. 
We begin by enumerating the saddle connections on $T$. 
Note that $U(T)$ is a regular hexagon with opposite edges identified and punctures at the vertices, center, and midpoint of each edge, as illustrated in Figure~\ref{covering}.
Since the regular unit hexagon tiles the plane under translation, we can lift saddle connections to the plane.
If we consider the lift $\tilde f$ of a saddle connection $f$ and let $x=f(0)$, then the lifts of $x$ form a
lattice in the plane generated by two vectors $(0,\sqrt3)^T,(-3/2,\sqrt3/2)^T$, and if we fix some preimage as the origin and take $\tilde f(0)=0$ then $\tilde f$
is completely determined by 
\[\tilde f(1)=n\begin{pmatrix}0\\\sqrt3\end{pmatrix} + m \begin{pmatrix}-3/2\\\sqrt3/2\end{pmatrix}.\]
We can assume $\gcd(n,m)=1$, thus the automorphism group of the lattice acts 
transitively on the set of saddle connections originating from $x$. 
Relative to the basis $(0,\sqrt3)^T,(-3/2,\sqrt3/2)^T$, this automorphism group is $\mathrm{GL}_2(\Z)$.
Fixing the origin as the center of some hexagon and quotienting by the lattice, 
we see that that this is also the automorphism group of $U(T)$, neglecting the punctures.

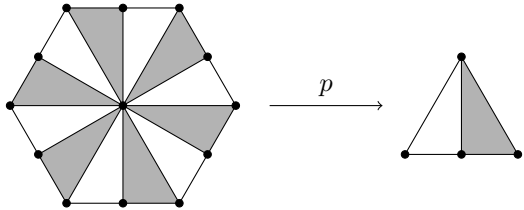
\begin{figure}
\begin{tikzpicture}[scale=1.5]
\trihexagon{0}{0}
\draw[->] (1.8,.866) -- (2.8,.866) node[midway, above] {$p$};
\begin{scope}[shift={(3,.433)}]
\draw (0,0) -- (1,0) -- (.5,.866) -- cycle;
\draw (.5,0) -- (.5,.866);
\fill[opacity=.3] (.5,0) -- (1,0) -- (.5,.866) -- cycle;
\foreach \x in {(0,0),(1,0),(.5,0),(.5,.866)}
  {\node[draw,circle,inner sep=1pt,fill] at \x {};}
\end{scope}
\end{tikzpicture}
\caption{The unfolding $U(T)$ which covers the punctured sphere $\mathcal DT$. The black dots represent punctures. Edge identifications are not shown.}
\label{covering}
\end{figure}

Unfortunately, not every element of $\GL_2(\Z)$ preserves the punctures of $U(T)$.
We overcome this with two modifications. We restrict our attention to the subgroup $G$ which is the kernel of the map $\SL_2(\Z)\to \SL_2(\Z/2\Z)$.
It is easy to verify that $G$ preserves the punctures at $v_1$ and $v_3$, 
thus if we let $U(T)'$ be the surface obtained from $U(T)$ by removing the punctures at
$v_2$ then $G$ acts by automorphisms on $U(T)'$. If we remove the puncture at $v_2$ in $\mathcal DT$ 
to obtain $\mathcal DT'$, we have a six-fold covering
$p':U(T)'\to \mathcal DT'$ analogous to the covering $p:U(T)\to \mathcal DT$.
The inclusion $H_1(\mathcal DT',\Z)\to H_1(\mathcal DT,\Z)$ allows us to apply Lemma~\ref{lem:from stable}.

Observe that any saddle connection striking
$v_1$ % note: top vertex
must also strike
$v_3$ % note: bottom left vertex
by a simple symmetry argument (consider the midpoint of the saddle connection from $v_1$ to itself).
Thus it suffices to consider saddle connections striking $v_2$ or $v_3$. Since the puncture $v_2$ has been removed from $U(T)'$,
the lift to $U(T)'$ of any $\mathring f$ on $T$ is homotopic to one originating from $v_3$.

The action of $G$ on the lattice has two orbits, so every saddle connection originating from $v_i\in T$ 
is the image of one of two saddle connections under
an action of $G$. Thus every saddle connection on $T$ striking $v_1$ or $v_3$
lifts to the image of one of $s_1,s_2,s_3$ or $s_4$ shown in Figure~\ref{fig:cases} under an element of $G$,
up to choice of semicircle direction. The semicircles directions for the $s_i$ are chosen to minimize computations later.

\begin{figure}
\begin{tikzpicture}[scale=2.5]
\hexagonprime{0}{0}
% s_1
\draw (0.6,0) arc (0:90:.1cm);
\draw (0.5,0.1) -- (0.5,.766);
\draw (0.5,.766) arc (-90:90:.1cm);
\draw (0.5,.966) -- (0.5,1.632) node[midway, right, inner sep=1pt] {$s_1$};
\draw[->] (0.5,1.632) arc (-90:0:.1cm);
% s_2
\draw (1.15,.433) arc (180:240:.1cm);
\draw (1.15,.433) arc (180:90:.1cm);
\draw (1.25,.533) -- (1.25,1.199) node[midway, left, inner sep=1pt] {$s_2$};
\draw[->] (1.25,1.199) arc (270:120:.1cm);
\draw (-.25,.533) arc (90:120:.1cm);
\draw (-.25,.533) -- (-.25,1.199);
\draw[->] (-.25,1.199) arc (-90:-120:.1cm);
% s_3
\draw (0.44,0) arc (180:90:.06cm);
\draw (0.5,.06) -- (-.22,1.247) node[midway, right, inner sep=1pt] {$s_3$};
\draw[->] (-.22,1.247) arc (-60:-120:.06cm);
\draw (1.22,.381) arc (240:120:.06cm);
\draw (1.22,.485) -- (0.5,1.672);
\draw[->] (0.5,1.672) arc (-90:-180:.06cm);
% s_4
\draw (1,0) -- (0.53,.814) node[midway, right, inner sep=1pt] {$s_4$};
\draw (0.53,.814) arc (-60:120:.06cm);
\draw[->] (0.47,.918) -- (0,1.732);
\draw (-.22,.381) arc (-60:120:.06cm);
\draw[->] (-.25,.433) -- (-.5,.866);
\end{tikzpicture}
\caption{The paths $s_1,s_2,s_3$ and $s_4$ on $U(T)'$.}
\label{fig:cases}
\end{figure}
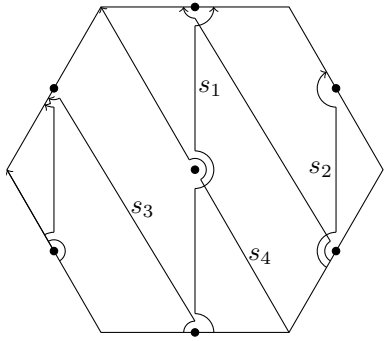

\begin{rem}
A trajectory or saddle connection on $T$ striking only $v_2$ will not lift to the image of any $s_i$, 
but will be homotopic up to choice of semicircles
(in particular with all semicircles chosen to travel the same direction). 
This allows us to treat these cases as special cases of the saddle
connections striking $v_1$ or $v_3$ in our investigation.
\end{rem}

In order to apply Lemma~\ref{lem:from stable}, we consider the action of $G$ on $H_1(U(T)',\Q)$ and consider the
map $H_1(U(T)',\Q)\to H_1(\mathcal DT',\Q)$ induced by the covering.
In order to choose a basis for $H_1(U(T)',\Q)$, we make use of the Mayer-Vietoris sequence for reduced homology.
We break the space $U(T)'$ into two open subsets $A$ and $B$ which intersect in a thin annulus, as shown in Figure~\ref{basis}.
$A$ is a disk with $4$ points removed, so $H_1(A,\Q)\cong \Q^4$ and the curves $p_1,\ldots,p_4$ shown in Figure~\ref{basis}
form a basis for $H_1(A,\Q)$. $B$ deformation retracts to the wedge product of two circles, so $H_1(B,\Q)\cong \Q^2$ and the curves
$c_1,c_2$ form a basis for $H_1(B,\Q)$.

\begin{figure}
\begin{tikzpicture}[scale=2.5]
\hexagons{2}{2}
\path[clip] (1.5,.866) -- (2.5,.866) -- (3,0) -- (4,0) -- (4.5,.866) -- (4,1.732) -- (4.5,2.598) -- (4,3.464) -- (3,3.464) -- (2.5,4.33) -- (1.5,4.33) -- (1,3.464) -- (1.5,2.598) -- (1,1.732) -- cycle;
\draw[->] (3.75,1.05) -- (3.75,2.782) node[right, inner sep=2pt, font=\small] {$c_1$};
\draw[->] (3.75,2.782) -- (2.25,3.649) node[above, inner sep=2pt, font=\small] {$c_2$};
\node at (3.75,3.1) {$B$};
\node at (3,2.4) {$A$};
\draw[->] (3.5,2.498) arc (-90:270:.1cm) node[below, font=\small] {$p_4$};
\draw[->] (2.75,2.931) arc (-90:270:.1cm) node[below, font=\small] {$p_1$};
\draw[->] (2.75,2.065) arc (-90:270:.1cm) node[below, font=\small] {$p_2$};
\draw[->] (3.5,1.832) arc (90:450:.1cm) node[above, font=\small] {$p_3$};
\foreach \x in {-1,0,1}
  \foreach \y in {-1,0,1}
    {\draw[fill, opacity = .3, dashed, shift={(-1.5*\x,.866*\x+1.732*\y)}] (3.65,1.2) -- (3.65,2.732) -- (2.35,3.5) -- (2.35,1.968) -- cycle;}
\end{tikzpicture}
\caption{The decomposition of $U(T)'$ into open subsets $A$ and $B$, and the curves which form the bases of $H_1(A,\Q)$ and $H_1(B,\Q)$, lifted to the plane.}
\label{basis}
\end{figure}
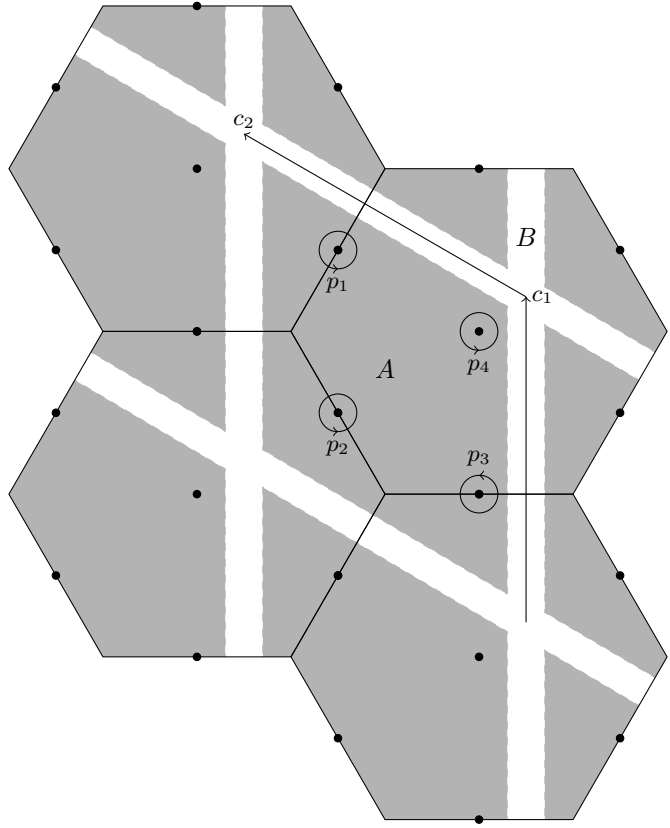

This gives rise to the Mayer-Vietoris sequence
\[0\longrightarrow H_1(A\cap B,\Q)\overset{(i^*,j^*)}{\lra} H_1(A,\Q)\oplus H_1(B,\Q)\overset{k^*-l^*}{\lra} H_1(U(T)',\Q)\longrightarrow 0\]
where the maps $i,j,k,l$ are the usual inclusions. Since $A\cap B$ is an annulus, $H_1(A\cap B,\Q)$ has a basis consisting of a single counter-clockwise loop around the annulus.
The image of this loop under $i^*$ is $p_1+\cdots+p_4$, while its image under $j^*$ is $0$. Thus in $H_1(U(T)',\Q)$, $p_1+\cdots+p_4=0$ and $p_1,p_2,p_3,c_1,c_2$ form a basis.

Since $\mathcal DT'$ is a $2$-punctured sphere, $H_1(\mathcal DT',\Q)\cong \Q$. The map of homologies induced by $p'$ 
is given by ${p'}^*=\begin{pmatrix} 1 & 1 & 1 & -1 & -1\end{pmatrix}$
with respect to the basis consisting of two counterclockwise circles on $\mathcal DT'$.
Since every element of $G$ permutes the loops $p_1,p_2,p_3$ and these have the same image under ${p'}^*$, the action of $G$ and the map ${p'}^*$ factor through the map
\[(x,y,z,u,v)\mapsto (x+y+z,u,v)\]
with ${p'}^*$ becoming $\begin{pmatrix} 1 & -1& -1\end{pmatrix}$ and
\[\begin{pmatrix} a&b\\c&d\end{pmatrix} \;\;\text{acting as}\;\; \begin{pmatrix} 1 & 1-a-c & 1-b-d\\ 0&a&b\\ 0&c&d\end{pmatrix}.\]
These compose to give $\begin{pmatrix} 1 & 1 - 2a-2c & 1-2b-2d\end{pmatrix}$, so the image of a closed path
under an element of $G$ maps down to a null-homologous path on $\mathcal DT'$ iff it is in the kernel of this matrix.

Suppose $f$ is a stable periodic trajectory with orbit tile $\mathcal O$ such that $T\in \ov{\mathcal O}$. Then on $T$, $\mathring f$
is homotopic to the image of some $s_i$ under an element of $G$ repeated $n$ times, up to choice of semicircles.
We examine these cases individually.

\subsection*{Case $s_1$} In this case we prove the following lemma:
\begin{lem}\label{lem:case1}
For sufficiently small $\epsilon,\delta>0$, $\mathcal O\cap B_\delta(T)$ 
is disjoint from the ray emanating from $T$ with angle $\frac{7\pi}{4}\pm \epsilon$.
\end{lem}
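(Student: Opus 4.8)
The plan is to reduce Lemma~\ref{lem:case1} to Lemma~\ref{lem:right restriction}. Since $T$ is a right triangle and the conclusion of Lemma~\ref{lem:right restriction} is word-for-word the conclusion we want, it suffices to check that the saddle connection at hand satisfies the three hypotheses of that lemma. Write $\mathring f\sim g\cdot s_1^n$ in $U(T)'$ with $g=\begin{pmatrix}a&b\\c&d\end{pmatrix}\in G$ and $n\ge 1$, and take $\alpha=\frac{\pi}{6}$, the angle of $T$ at the central puncture (which $p'$ sends to $v_1$). Since $\frac{\pi}{6}=\frac{\pi}{4\cdot 1+2}$ we are in the range $\frac{\pi}{4k+2}\le\alpha\le\frac{\pi}{4k}$ with $k=1$.

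First I would extract the combinatorics of $s_1$ from Figure~\ref{fig:cases}: one copy runs vertically through the hexagon, encircling the central puncture exactly once and otherwise meeting only edge midpoints. Under $p'$ these punctures map to $v_1$ and to the right-angle vertex $v_3$ respectively, so a single period of $s_1$ strikes the $\frac{\pi}{6}$-vertex once and otherwise only the right angle. Because every element of $G$ preserves the $v_1$- and $v_3$-punctures of $U(T)'$, the same holds for $g\cdot s_1^n$. This verifies the first two bullets of Lemma~\ref{lem:right restriction}.

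The third bullet---that $\mathring g$ winds about $v_1$ in both directions---is where the homological machinery enters. I would compute $[s_1]\in H_1(U(T)',\Q)$ in the basis $p_1,p_2,p_3,c_1,c_2$ and pass to the reduced coordinates $(x+y+z,u,v)$. Since $f$ is stable, Lemma~\ref{lem:from stable} requires $p'(\mathring f)$ to be null-homologous, i.e. the reduced class must be annihilated by the functional $\begin{pmatrix}1&1-2a-2c&1-2b-2d\end{pmatrix}$ established above. Reversing the orientation of the semicircle about $v_1$ in one period changes $[\mathring f]$ by a loop around $v_1$, and I would check that the image of such a loop under this functional is nonzero; I would then argue that the geometrically forced semicircle orientations cannot all agree without violating null-homology, so both orientations must occur, giving the final hypothesis.

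With all three hypotheses established, Lemma~\ref{lem:right restriction} applied with $\alpha=\frac{\pi}{6}$ yields that $\mathcal O\cap B_\delta(T)$ is disjoint from the ray in direction $\frac{7\pi}{4}\pm\epsilon$ for all sufficiently small $\epsilon,\delta>0$. I expect the main obstacle to be the third step: one must match the abstract loops $p_i$ and the $G$-action to the actual unfolding, identify which $p_i$ is the central puncture, and carry out the bookkeeping showing that a uniform choice of semicircle orientation about $v_1$ is incompatible with null-homology for every admissible $g$. Reconciling this purely homological obstruction with the fact that the orientations are determined geometrically, rather than chosen, is the delicate point.
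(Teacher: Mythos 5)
Your plan follows the same route as the paper—reduce to Lemma~\ref{lem:right restriction} with $\alpha=\pi/6$, $k=1$, and use Lemma~\ref{lem:from stable} together with the functional $\begin{pmatrix}1 & 1-2a-2c & 1-2b-2d\end{pmatrix}$ to force semicircles in both directions around $v_1$—but the step you defer with ``I would then argue'' and ``carry out the bookkeeping'' is the entire content of the proof, and the one concrete check you propose is not enough to carry it out. Nonvanishing of the functional on a loop around $v_1$ proves nothing by itself, because reversing semicircles around $v_3$ also changes the functional's value (by $\mp 1$ each), so a uniform orientation around $v_1$ combined with some pattern of $v_3$-orientations could a priori still be annihilated. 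What actually closes the argument is the congruence structure of $G$: since $g\equiv I \bmod 2$, $a$ is odd and $c$ is even. Writing the class of $\mathring f$ as $(nx,n,0)^T$ with $x\in[-1,3]$ (a clockwise circle around $v_1$ contributes $+3$ to the first coordinate, one around $v_3$ contributes $-1$), null-homology gives $nx+n(1-2a-2c)=0$, i.e.\ $x=2a+2c-1\equiv 1\bmod 4$, hence $x=1$ exactly; whereas a uniform choice of orientation around $v_1$ forces $x\le 0$ or $x\ge 2$, a contradiction. This parity input is not ``bookkeeping'': for $g\in\SL_2(\Z)$ outside $G$, e.g.\ with $a+c=0$ so that $1-2a-2c=1$, taking all $v_1$-semicircles in the baseline direction and all $v_3$-semicircles reversed lies in the kernel of the functional, so the claim you want is simply false without the mod-$2$ structure of $G$.

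A second, smaller gap: your first two bullets are not purely combinatorial either. That $f$ genuinely degenerates to a saddle connection striking the $\pi/6$-vertex---rather than remaining a trajectory on $T$, or degenerating to a saddle connection striking only $v_2$ or only the right angle---is itself a consequence of $x=1\ne 0,2$: a path that merely winds around a puncture in the homotopy class of $\mathring f$ need not strike the corresponding vertex, and the uniform-orientation configurations are exactly the trajectory/$v_2$-only cases. So the homological computation must come first, and the hypotheses of Lemma~\ref{lem:right restriction} (including ``period more than one'' and ``strikes $v_1$'') fall out of it; this is how the paper's proof is organized.
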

\begin{proof}
The curve $s_1$ is homologous to $c_1$ and if repeated $n$ times, clockwise circles around $v_1$ and $v_3$ can be introduced up to $n$ times each.
The resulting curve has homology class
\[\begin{pmatrix}nx\\n\\0\end{pmatrix}\;\;\text{for some}\;x\in [-1,3]\]
since introducing a clockwise circle around $v_1$ contributes $3$ to $p_1+p_2+p_3$ while introducing a clockwise circle around $v_3$ contributes $-1$.
The image of this under an element of $G$ maps down to a curve on $\mathcal DT'$ with homology class $nx+n(1-2a-2c)$. If this is $0$, we have
$0\equiv x+1-2a-2c\equiv x-1\bmod 4$
since $a$ is odd and $c$ is even, thus $x=1$. In particular, $x\ne 0,2$ so not all semicircles can be in the same direction, i.e. $f$ be a
saddle connection on $T$ striking $v_1$ and $v_3$.
Furthermore, if all the semicircles around $v_1$ are clockwise we have $x\le 0$ while if all of them are counter-clockwise we have $x\ge 2$.
Thus we have some semicircles around $v_1$ in each direction, and so the conclusion follows from Lemma~\ref{lem:right restriction}.
\end{proof}

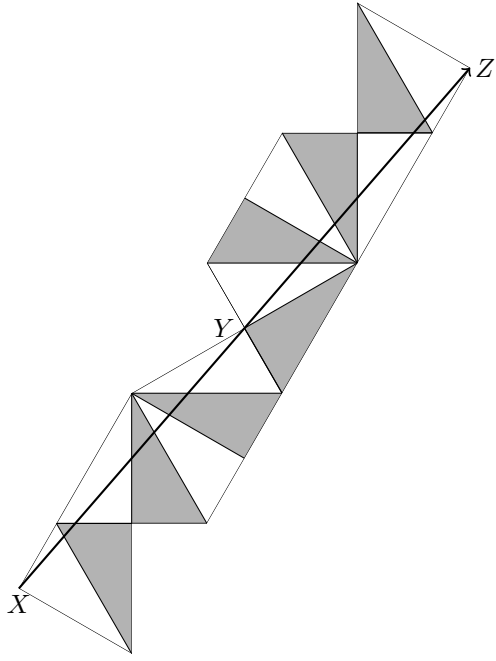
\begin{figure}
\begin{tikzpicture}[scale=2]
\node at (2.75,1.199) {$X$};
\node at (5.85,4.763) {$Z$};
\draw[thick, ->] (2.75,1.299) -- (5.75,4.763) node[midway, left] {$Y$};
\path[clip] (2.75,1.299) -- (3.5,.866) -- (3.5,2*.866) -- (4,2*.866) -- (5.75,4.763) -- (5,5.196) -- (5,5*.866) -- (4.5,5*.866) -- (4,3.464)
 -- (4.25,3.031) -- (3.5,2.598) -- cycle;
\trihexagonnodots{1.5*2}{0}
\trihexagonnodots{1.5*2}{.866*2}
\trihexagonnodots{1.5*3}{.866*3}
\trihexagonnodots{1.5*3}{.866*5}
\end{tikzpicture}
\caption{One repetition of the case $s_2$ for $a=3$.}
\label{fig:case2}
\end{figure}

\subsection*{Case $s_2$} This case includes the Fagnano orbit for acute triangles; in fact, applying Construction~\ref{con:deform} 
to the Fagnano orbit gives $s_2$. For any such $\mathcal O$ we have:
\begin{lem}\label{lem:case2}
For sufficiently small $\delta>0$, $\mathcal O\cap B_\delta(T)$ contains only acute triangles.
\end{lem}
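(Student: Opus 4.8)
The plan is to follow the template of case $s_1$: extract the homology class of $s_2$, impose the stability constraint, and then turn the resulting data into a geometric condition on the single vertex that $s_2$ strikes. Tracing $s_2$ through the edge identifications of the hexagon shows that one period lifts to the vertical translation by $c_1$, just as for $s_1$; the only difference is that the lifted segment runs through the edge-midpoint punctures, which are the preimages of $v_3$, rather than through the central puncture $v_1$. Hence in the factored coordinates $(p_1+p_2+p_3,\,c_1,\,c_2)$ the class of $s_2$ is $(s_0,1,0)$, and — since $v_3$ is the only vertex struck — repeating $s_2$ a total of $n$ times while choosing each semicircle about $v_3$ clockwise or counter-clockwise produces a class $(s,n,0)$, with each strike of $v_3$ moving the first coordinate by $\pm1$, exactly as a clockwise circle about $v_3$ contributed $-1$ in case $s_1$.

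Next I would impose null-homology. By Lemma~\ref{lem:from stable}, stability of $f$ forces the image of $\mathring f$ under some $\begin{pmatrix}a&b\\c&d\end{pmatrix}\in G$ to lie in the kernel of $(1,\,1-2a-2c,\,1-2b-2d)$, so that $s+n(1-2a-2c)=0$, i.e.\ $s=n(2a+2c-1)$. As $a$ is odd and $c$ is even the factor $2a+2c-1$ is odd, giving the consistency check $s\equiv n\pmod 4$. The decisive contrast with case $s_1$ is that the relation is now compatible with the semicircles about $v_3$ all being taken in one direction; this uniform pattern is precisely the deformation of the genuine Fagnano orbit, which is stable on acute triangles and has $T$ on the boundary of its tile, and which therefore must satisfy the relation. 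I would then determine, from the permitted range of $s$ together with this relation, that the admissible patterns are exactly the acute-compatible ones, disposing of any surviving mixed patterns by Lemma~\ref{lem:right restriction} as in case $s_1$.

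Finally I would read off acuteness from the unfolding. The sole vertex involved is $v_3$, whose angle on a nearby triangle $Q$ is $\theta_3=\pi-\theta_1-\theta_2$, and the uniform choice of semicircles forces $\mathring f$ to pass $v_3$ on a fixed side. Tracking the fan of reflected copies of $Q$ about the successive images of $v_3$, as in the proof of Lemma~\ref{lem:right restriction}, one sees that when $\theta_3<\tfrac{\pi}{2}$ the copies open out so that $v_3$ clears the straight line and Construction~\ref{con:new trajectory} returns a genuine periodic trajectory, whereas when $\theta_3>\tfrac{\pi}{2}$ the images of $v_3$ fold back across the line and obstruct it; the borderline $\theta_3=\tfrac{\pi}{2}$ is exactly the locus of right triangles, on which the orbit stays a saddle connection. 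Since $\theta_3<\tfrac{\pi}{2}$ is equivalent to $\theta_1+\theta_2>\tfrac{\pi}{2}$, i.e.\ to $Q$ being acute, this gives $\mathcal{O}\cap B_\delta(T)\subseteq\{\text{acute triangles}\}$. I expect this last step to be the main obstacle: the conclusion is a half-plane rather than the single rays of \S3, so I must show that the obstruction is controlled by the one quantity $\theta_3-\tfrac{\pi}{2}$ in every direction of perturbation simultaneously, and that the argument is uniform over all $g\in G$ and all repetition numbers $n$, including the verification that no admissible pair $(g,n)$ yields a mixed semicircle pattern escaping the Fagnano geometry.
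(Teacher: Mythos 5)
Your homology bookkeeping (the class $(s,n,0)$, the constraint $s=n(2a+2c-1)$ from Lemma~\ref{lem:from stable}, and the parity of $a$ and $c$) matches the paper's, but you then invert its key consequence, and this error sinks everything after it. Since $a$ is odd and $c$ is even, $2a+2c-1$ is an \emph{odd} integer, so $s$ must be an odd multiple of $n$; a uniform choice of semicircle directions about $v_3$ gives the extreme values of the first coordinate ($s=0$ or $s=2n$ in the paper's normalization, $s=\pm 2n$ in yours), which are even multiples of $n$. So the null-homology relation is \emph{incompatible} with all semicircles being taken in one direction: it forces exactly the mixed pattern ($x=1$, i.e. $s=n$), which is what tells us that $f$ genuinely degenerates to a saddle connection striking $v_3$ on $T$. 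Your claim that the uniform pattern ``is precisely the deformation of the genuine Fagnano orbit'' is likewise backwards: the Fagnano orbit is stable, so by Lemma~\ref{lem:from stable} its deformed curve must itself be null-homologous, hence mixed --- this is precisely the point of the remark following Construction~\ref{con:deform}, that different periods of the degenerate saddle connection acquire semicircles in opposite directions.

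Because of this inversion, your geometric step starts from a false premise, and your fallback of ``disposing of any surviving mixed patterns by Lemma~\ref{lem:right restriction}'' cannot work: that lemma requires the saddle connection to strike the small-angle vertex $v_1$, which $s_2$ never touches, and in any case its conclusion only excludes a ray near direction $\frac{7\pi}{4}$, not the entire closed half-plane $\theta_3\ge\frac{\pi}{2}$ that Lemma~\ref{lem:case2} demands --- the very difficulty you flag in your last sentence and leave unresolved. The paper's actual argument runs the logic the other way around: having \emph{forced} the mixed pattern $x=1$, it takes three consecutive struck copies $X,Y,Z$ of $v_3$ in the unfolding (Figure~\ref{fig:case2}), shows they remain collinear exactly when the third angle stays right, and that as the third angle increases $Y$ falls off the line $\overline{XZ}$ at a nonzero rate; hence on any non-acute triangle near $T$ the curve could only survive with \emph{all} semicircles counter-clockwise, i.e. $x=2$, contradicting $x=1$ (with a separate half-hexagon bending argument to handle general $a=2k+1$). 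It is this contradiction on the non-acute side, not any existence statement on the acute side, that confines $\mathcal O\cap B_\delta(T)$ to acute triangles.
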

\begin{proof}
If $s_2$ is repeated $n$ times, we can introduce up to $2n$ counter-clockwise circles around $v_3$ and so the resulting curve has homology class
$n(x,1,0)^T$ for some $x\in [0,2]$. As in the previous case, we must have $x=1$, so $f$ strikes $v_3$ on $T$,
and $c=1-a$, so it has homology class $n(1,a,1-a)^T$. 

The unfolding of $T$ along $\mathring f$ is illustrated in Figure~\ref{fig:case2} for $a=3$, 
assuming that the semicircle around $Y$ in $\mathring f$ is counter-clockwise.
As $(\theta_1,\theta_2)$ vary such that the third angle remains right, the points $X,Y,Z$ remain collinear.
As $\theta_1$ decreases with $\theta_2$ fixed, $Y$ falls below the line $\ov{XZ}$, in such a way that the derivative of the distance is nonzero.
Thus for some translate of $f$ to remain within the unfolding,
the semicircles around $X$ and $Z$ must also be counter-clockwise. Since our choice of $Y$ is arbitrary among all copies of $v_3$ in the saddle connection,
it follows that all semicircles must be counter-clockwise, thus $x=2$, contradicting the fact that $x=1$. Since the positions of $X,Y,Z$ are smooth functions
of $(\theta_1,\theta_2)$, by considering the derivative along any direction such that the third angle increases we see that
sufficiently near $T$ the same holds.

In the general case $a=2k+1$ with $k> 0$, the unfolding will include $k$ half-hexagons before $Y$ below the vertex $v_1$, 
followed by $k$ half-hexagons after $Y$ above $v_1$.
The vertices $X,Y,Z$ remain collinear as $(\theta_1,\theta_2)$ vary such that $T$ remains right, as the unfolding is symmetric about $Y$ up to
the direction of the semicircle around $Y$, which is irrelevant for right triangles.
As $\theta_1$ decreases with $\theta_2$ fixed, the half-hexagons below $v_1$ bend to the right, so the line $\overline{XY}$ bends right,
while the half-hexagons above $v_1$ bend to the left, so the line $\overline{YZ}$ bends left. Thus $Y$ falls below $\overline{XZ}$ as in 
the $a=3$ case.
For negative $k$ the half-hexagons below $v_1$ come first, but the trajectory proceeds to the left rather than the right,
so the role of ``left'' and ``right'' are reversed, so the same reasoning applies.
\end{proof}

It seems probable that $\mathcal O$ consists entirely of acute triangles.
However, our fundamentally local methods are not sufficient to establish this.

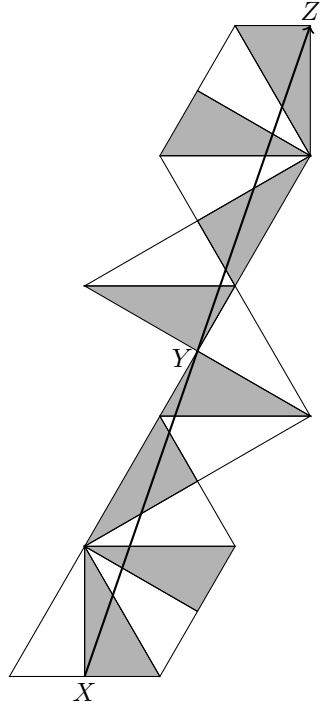
\begin{figure}
\begin{tikzpicture}[scale=2]
\draw (0.0000, 0.0000) -- (-0.5000, -0.8660) -- (0.0000, -0.8660) -- cycle;
\draw (0.0000, 0.0000) -- (0.5000, -0.8660) -- (0.0000, -0.8660) -- cycle;
\fill[opacity = 0.30] (0.0000, 0.0000) -- (0.5000, -0.8660) -- (0.0000, -0.8660) -- cycle;
\draw (0.0000, 0.0000) -- (0.5000, -0.8660) -- (0.7500, -0.4330) -- cycle;
\draw (0.0000, 0.0000) -- (1.0000, 0.0000) -- (0.7500, -0.4330) -- cycle;
\fill[opacity = 0.30] (0.0000, 0.0000) -- (1.0000, 0.0000) -- (0.7500, -0.4330) -- cycle;
\draw (0.0000, 0.0000) -- (1.0000, 0.0000) -- (0.7500, 0.4330) -- cycle;
\draw (0.0000, 0.0000) -- (0.5000, 0.8660) -- (0.7500, 0.4330) -- cycle;
\fill[opacity = 0.30] (0.0000, 0.0000) -- (0.5000, 0.8660) -- (0.7500, 0.4330) -- cycle;
\draw (1.5000, 0.8660) -- (0.5000, 0.8660) -- (0.7500, 0.4330) -- cycle;
\draw (1.5000, 0.8660) -- (0.5000, 0.8660) -- (0.7500, 1.2990) -- cycle;
\fill[opacity = 0.30] (1.5000, 0.8660) -- (0.5000, 0.8660) -- (0.7500, 1.2990) -- cycle;
\draw (1.5000, 0.8660) -- (1.0000, 1.7321) -- (0.7500, 1.2990) -- cycle;
\draw (0.0000, 1.7321) -- (1.0000, 1.7321) -- (0.7500, 1.2990) -- cycle;
\fill[opacity = 0.30] (0.0000, 1.7321) -- (1.0000, 1.7321) -- (0.7500, 1.2990) -- cycle;
\draw (0.0000, 1.7321) -- (1.0000, 1.7321) -- (0.7500, 2.1651) -- cycle;
\draw (1.5000, 2.5981) -- (1.0000, 1.7321) -- (0.7500, 2.1651) -- cycle;
\fill[opacity = 0.30] (1.5000, 2.5981) -- (1.0000, 1.7321) -- (0.7500, 2.1651) -- cycle;
\draw (1.5000, 2.5981) -- (0.5000, 2.5981) -- (0.7500, 2.1651) -- cycle;
\draw (1.5000, 2.5981) -- (0.5000, 2.5981) -- (0.7500, 3.0311) -- cycle;
\fill[opacity = 0.30] (1.5000, 2.5981) -- (0.5000, 2.5981) -- (0.7500, 3.0311) -- cycle;
\draw (1.5000, 2.5981) -- (1.0000, 3.4641) -- (0.7500, 3.0311) -- cycle;
\draw (1.5000, 2.5981) -- (1.0000, 3.4641) -- (1.5000, 3.4641) -- cycle;
\fill[opacity = 0.30] (1.5000, 2.5981) -- (1.0000, 3.4641) -- (1.5000, 3.4641) -- cycle;

\draw[thick, ->] (0, -.866) -- (1.5, 3.4641);
\node at (0, -.966) {$X$};
\node at (.65, 1.25) {$Y$};
\node at (1.5, 3.5641) {$Z$};
\end{tikzpicture}
\caption{One repetition of the case $s_3$ for $a+b=3$.}
\label{fig:case3}
\end{figure}

\subsection*{Case $s_3$} This case is similar to $s_2$, and Lemma~\ref{lem:case2} holds for it as well.
\begin{proof}
The curve $s_3$, repeated $n$ times, admits up to $2n$ counter-clockwise semicircles around $v_3$ and thus has homology class
$n(x+1,1,1)^T$ for some $x\in [0,2]$. The image of this curve under an element of $G$ maps down to a curve with homology class $n(x+3-2(a+b+c+d))$ on
$\mathcal DT'$. If this is $0$ then $0\equiv x+3-2(a+b+c+d)\equiv x+3\bmod 4$, thus $x=1$. Since $x\ne 0,2$, $f$ must strike $v_3$ on $T$. 
From $x=1$ it follows that $a+b+c+d=2$ so $d=2-a-b-c$, and so $\mathring f$ has homology class $n(2,a+b,2-a-b)^T$.

Figure~\ref{fig:case3} shows the unfolding of $T$ along $\mathring f$ for $a+b=3$, assuming
that the semicircle around $Y$ in $\mathring f$ is counter-clockwise. Similar to the previous case, when $(\theta_1,\theta_2)$ vary
such that the third angle remains right, $X,Y,Z$ remain collinear, while when $\theta_1$ decreases with $\theta_2$ fixed,
 $Y$ goes above the line $\ov{XZ}$ in such a way that the derivative of the distance is nonzero.
Thus we conclude that every semicircle must be counter-clockwise, so $x=2$, contradicting the fact that $x=1$.
The same derivative argument extends this result to arbitrary directions such that the third angle increases.

For the general case, again we have that $X,Y,Z$ remain collinear as long as the underlying triangle is right by symmetry.
A similar argument about half-hexagons shows that decreasing $\theta_1$ with $\theta_2$ fixed causes $Y$ to pass above the line $\overline{XZ}$,
but requires more care. The pattern of half-hexagons is
\begin{center}
\begin{tabular}{c|c|c}
$a+b$ & before $Y$ & after $Y$\\
\hline
$1$ & &\\
$3$ & $B$ & $A$\\
$5$ & $BA$ & $BA$ \\
$7$ & $BBA$ & $BAA$\\
$9$ & $BBAA$ & $BBAA$\\
$2k+1$ & $B^{\lceil k/2\rceil}A^{\lfloor k/2\rfloor}$ & $B^{\lfloor k/2\rfloor}A^{\lceil k/2\rceil}$\\
\end{tabular}
\end{center}
where $A$ indicates a half-hexagon above $v_1$ and $B$ a half-hexagon below $v_1$. For $k>0$, the number of half-hexagons below
$v_1$ between $X$ and $Y$ is always at least the number above, and the ones below come first, thus have a larger affect on the line
$\overline{XY}$ since they rotate more of the line. Thus the line $\overline{XY}$ bends left. Symmetrically, the line $\overline{YZ}$ bends right,
so $Y$ moves above $\overline{XZ}$.
For negative $k$ the roles are reversed, but so are the directions ``left'' and ``right'', so the same argument applies.
\end{proof}

\subsection*{Case $s_4$} This is similar to $s_1$, and Lemma~\ref{lem:case1} holds for it as well.
\begin{proof} 
The curve $s_4$, repeated $n$ times, admits up to $n$ clockwise circles around $v_1$ and $v_3$ each, so has homology class $n(x+1,1,1)^T$ for some $x\in [-1,3]$.
As in the previous case we have $x=1$, so not every semicircle around $v_1$ can be in the same direction,
hence the result follows from Lemma~\ref{lem:right restriction}.
\end{proof}

Together, these lemmas prove Theorem~\ref{no cover}.

\section{Near the Veech Triangles}

We can also apply Lemma~\ref{lem:from stable} to examine Hooper and Schwartz's Conjecture~1.6 in \cite{NearlyIsosc} that no neighborhood of 
the isosceles triangle $V_n$ with base angle $\frac{\pi}{2n}$ admits a covering by finitely many orbit tiles for $n=2^m,m\ge 2$.
Experimentally, the main difficulty in covering a neighborhood of $V_n$ seems to be covering a neighborhood of 
the ray emanating from $V_n$ with direction $\frac{\pi}{4}$, i.e. the ray of isosceles triangles with greater base angle.
We derive strong restrictions on any such orbit tile, and show that it must correspond to a trajectory which is poorly behaved near $V_n$.

\begin{thm}\label{thm:veech}
Let $f$ be a periodic trajectory with orbit tile $\mathcal O$ such that $\ov{\mathcal O}$ contains the ray emanating from $V_n$ with
angle $\frac{\pi}{4}$ and length $\delta>0$. If $n=2^m,m\ge 2$, then $f$ degenerates to a saddle connection on $V_n$ which strikes the
isosceles vertex at least $2\lfloor\sqrt{2n}\rfloor$ times over the period of $f$.
\end{thm}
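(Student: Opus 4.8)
The plan is to combine the homological machinery of \S 4 with the geometric restriction lemmas of \S 3, adapting both to the triangle $V_n$. First I would set up the analogue of the covering $p:U(T)\to \mc DT$ for $V_n$. The key structural input is that $V_n$ is a Veech triangle, so its unfolding $U(V_n)$ is a translation surface with a large affine automorphism group (the Veech group is a lattice), and saddle connections on $V_n$ are completely enumerated by the work of Hooper and Schwartz in \cite{NearlyIsosc, RightTri} that the reader is assumed to know. As in the 30-60-90 case, I would identify a finite-index subgroup $G$ of the automorphism group of the lattice that preserves the relevant punctures, reduce every saddle connection striking the isosceles vertex to the $G$-orbit of finitely many model paths $s_i$ on $U(V_n)'$, and compute the induced map on first homology $H_1(U(V_n)',\Q)\to H_1(\mc DV_n',\Q)$ together with the $G$-action, exactly as in the matrix factorization at the end of \S 4.

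The heart of the argument is then a congruence obstruction. By Lemma~\ref{lem:from stable}, $f$ is stable (equivalently, $\mathcal O$ is open and $V_n\in\ov{\mathcal O}$) only if $p'(\mathring g)$ is null-homologous on $\mc DV_n'$. After passing to the homology computation, being null-homologous forces a linear condition on the homology class of $\mathring g$, and this class is built from the integer $x$ recording the net number of clockwise versus counter-clockwise semicircles introduced around the isosceles vertex over one period of $s_i$. As in Lemma~\ref{lem:case1} and Lemma~\ref{lem:case2}, reducing the null-homology condition modulo the order of the relevant torsion (governed by the base angle $\frac{\pi}{2n}$, and here crucially by $n=2^m$) pins $x$ to a value strictly between the two extremes $0$ and the maximal number of semicircles. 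Since $x$ is strictly interior, $\mathring g$ must contain semicircles in \emph{both} directions around the isosceles vertex, which is precisely the hypothesis that feeds Lemma~\ref{lem:isos restriction}: the existence of both a clockwise and a counter-clockwise semicircle around the isosceles vertex means $g$ strikes that vertex more than once, and in fact the number of direction-changes is bounded below by the interior spacing forced by the congruence.

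To get the explicit lower bound $2\lfloor\sqrt{2n}\rfloor$ I would track how many times the model path $s_i$, and hence $g$, must wind around the lattice before the semicircle-count $x$ can realize an interior value compatible with the null-homology congruence modulo $2n$ (or its appropriate divisor). The point is that each individual semicircle contributes a bounded amount to the homology class, so achieving a net count in the allowed residue class while also forcing at least one semicircle of each sign requires the period to visit the isosceles vertex on the order of $\sqrt{2n}$ times; the factor of $2$ comes from needing both orientations to appear and each contributing a pair in the symmetrized (isosceles-to-right-triangle) picture of Lemma~\ref{lem:isos restriction}. Concretely, I expect a quadratic relation of the form (number of vertex strikes)$^2 \gtrsim 2n$ to emerge from matching the linear growth of $x$ against the modulus $2n$, giving the floor bound after taking integer parts.

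The main obstacle will be the last step: extracting a \emph{sharp} quantitative lower bound rather than merely ``both directions occur.'' In the 30-60-90 case one only needed the qualitative conclusion that semicircles of both signs appear, so Lemma~\ref{lem:right restriction} applied immediately and no counting was required. Here I must control precisely how the allowed residue classes for $x$ are spaced as a function of $n=2^m$, and argue that realizing an interior residue while keeping the homology class in the image of a length-$\ell$ path forces $\ell\ge 2\lfloor\sqrt{2n}\rfloor$. This requires a careful analysis of the specific lattice of $V_n$ and the holonomy contributions of individual semicircles, drawing on the detailed structure in \cite{NearlyIsosc}; the $2$-adic hypothesis $n=2^m$ is what makes the torsion in $H_1$ and hence the congruence clean enough to yield the square-root bound, and verifying that the bound is exactly $\lfloor\sqrt{2n}\rfloor$ (and not merely $\Omega(\sqrt n)$) is where the delicate combinatorial bookkeeping lives.
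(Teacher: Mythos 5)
Your high-level skeleton (unfold, pass to the covering of $\mathcal DV_n$, use Lemma~\ref{lem:from stable} to turn stability into a congruence on homology, extract a square-root bound from a quadratic count) matches the paper's strategy in outline, but the two steps that actually produce the bound $2\lfloor\sqrt{2n}\rfloor$ are missing or wrong. The quantitative mechanism you propose is internally inconsistent: you assert that ``each individual semicircle contributes a bounded amount to the homology class'' and that matching this linear growth against the modulus $2n$ yields $(\text{number of strikes})^2\gtrsim 2n$. Bounded per-strike contributions measured against a modulus $2n$ would force on the order of $n$ strikes, not $\sqrt{2n}$ --- and in fact the contributions are not bounded at all. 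The paper's count works differently: a horizontal saddle connection decomposes into components $L_k,R_k$ indexed by a level $k$; each strike of the isosceles vertex moves the level by exactly $2$ (the transition rules); and the contribution of a level-$k$ component to $(p^*_1-p^*_{-1})(\phi^*[g])$ is $n-k\pm1$, i.e.\ proportional to the depth below the top level $n-1$. Hence a path with $x$ components contributes at most $2+\left(\frac{x}{2}-1\right)\frac{x}{2}$; since $n=2^m$ makes the odd factor $r$ from Proposition~9.9 of \cite{NearlyIsosc} a unit mod $2n$, this total being strictly between $0$ and $2n$ makes the class nonzero mod $2n$, and that is what forces $x\ge 2\lfloor\sqrt{2n}\rfloor$. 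Without this level/component structure there is no source of quadratic growth, so your ``delicate combinatorial bookkeeping'' has nothing to book-keep.

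Second, your use of Lemma~\ref{lem:isos restriction} is misplaced. Its conclusion is that $\mathcal O$ \emph{avoids} the ray at angle $\frac{\pi}{4}$, so it can only ever be used to contradict the hypothesis of the theorem; in the paper it plays exactly that role, and only to eliminate the saddle connection $s_0$ (Lemma~\ref{lem:s_0}), never to produce a strike count. Used the way you propose, it yields only the trivial bound ``strikes more than once.'' What is missing in its place is the structural input the paper gets by extending Hooper's involution theorem to saddle connections (Lemma~\ref{lem:hooper} and Corollary~\ref{cor:three cases}): because $\ov{\mathcal O}$ contains an interval of isosceles triangles, the degenerate $f$ is invariant under the rotation $i'$ and strikes the base midpoint exactly twice, which pins $m_{n-1}=2$, forces the symmetric traversal after the return to $L_{n-1}$, and is where the ray hypothesis actually enters the main case. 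A smaller but real gap: for $V_n$ there is no plane lattice as in the 30-60-90 case, so reducing an arbitrary saddle connection to the $\mathrm{Aff}^+(U(V_n))$-orbit of a horizontal one cannot be done by acting on a lattice of translations; it requires knowing the Veech group is a lattice in $\mathrm{PSL}_2(\R)$ and classifying its parabolic conjugacy classes (Lemmas~\ref{lem:conjugate} and~\ref{lem:enumeration}).
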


This extends Theorem~1.5 of \cite{NearlyIsosc} that $V_n$ is not contained on the interior of an orbit tile for $n=2^m,m\ge 2$,
which is equivalent to the statement that $f$ degenerates to a saddle connection on $V_n$.
In the case of $V_4$, we can explicitly enumerate the possible counterexamples.

\begin{thm}\label{thm:V_4}
Let $f$ be a periodic trajectory with orbit tile $\mathcal O$ such that $\ov{\mathcal O}$ contains the ray emanating from $V_4$ with
angle $\frac{\pi}{4}$ and length $\delta>0$. On $V_4$, $\mathring f=\pm\phi S_j$ for some $\phi \in \mathrm{Aff}^+(U(V_4))$
where $S_j$ is as in Definition~\ref{def:S_j}.
\end{thm}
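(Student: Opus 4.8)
The plan is to follow the template of \S4, replacing the finite-index group $G$ and the lattice automorphisms of $U(T)$ by the affine automorphism group $\mathrm{Aff}^+(U(V_4))$ of the Veech surface. First I would invoke Theorem~\ref{thm:veech}: since $V_4=V_n$ with $n=4=2^2$, the hypothesis forces $f$ to be stable and to degenerate on $V_4$ to a genuine saddle connection $g$ striking the isosceles vertex at least $2\lfloor\sqrt8\rfloor=4$ times. Because $f$ is stable, Lemma~\ref{lem:from stable} gives that $p(\mathring f)=p(\mathring g)$ is null-homologous on $\mc D V_4$, with the punctures modified as in \S4 so that the relevant automorphisms act. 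The goal is then to classify, up to $\mathrm{Aff}^+(U(V_4))$ and orientation reversal, the deformed saddle connections $\mathring g$ that are simultaneously null-homologous downstairs and compatible with the $\frac\pi4$ ray lying in $\ov{\mc O}$.

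Next I would exploit the lattice property of $U(V_4)$. Because the image of $\mathrm{Aff}^+(U(V_4))$ in $\SL(2,\R)$ (the Veech group) is a lattice, the Veech dichotomy confines saddle connections to the finitely many periodic directions, one for each cusp of the quotient of $\mathbb H$ by the Veech group, with only finitely many saddle connections in each such direction; hence $\mathrm{Aff}^+(U(V_4))$ acts on saddle connections with finitely many orbits, whose representatives can be read off from the Hooper--Schwartz combinatorial model. The two constraints are compatible with this action: the hypotheses of Lemmas~\ref{lem:right restriction} and~\ref{lem:isos restriction} (the numbers of strikes at each vertex and the semicircle directions) are affine invariants, while the null-homology condition of Lemma~\ref{lem:from stable} transforms through the induced action of $\mathrm{Aff}^+(U(V_4))$ on $H_1(U(V_4))$ and its pushforward under the covering map, computed as in the matrix identity of \S4. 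It therefore suffices to test a single representative $S$ from each orbit, and every admissible $\mathring f$ is then $\pm\phi S$ for some $\phi\in\mathrm{Aff}^+(U(V_4))$.

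The remaining step is a finite check, carried out as in the four cases of \S4. For each orbit representative $S$ I would compute, through a Mayer--Vietoris decomposition of $U(V_4)$, the homology class of $\mathring S$ together with all of its admissible semicircle insertions, push this forward to $H_1(\mc D V_4,\Q)$, and retain only those orbits for which some admissible choice of semicircles makes the image vanish, as required by Lemma~\ref{lem:from stable}. The bound of Theorem~\ref{thm:veech} then discards the representatives striking the isosceles vertex too few times, and Lemma~\ref{lem:isos restriction} removes any residual orbits whose bisected saddle connection meets the hypotheses of Lemma~\ref{lem:right restriction} at the base angle $\alpha=\frac\pi8=\frac{\pi}{4\cdot2}$, which would push $\mc O$ off the $\frac\pi4$ ray. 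The orbits that survive are exactly those represented by the $S_j$ of Definition~\ref{def:S_j}, so $\mathring f=\pm\phi S_j$.

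I expect the main obstacle to be the homological bookkeeping on $U(V_4)$. Unlike $U(T)$, this surface has positive genus and a far larger first homology, so setting up the Mayer--Vietoris computation, pinning down the induced map $H_1(U(V_4))\to H_1(\mc D V_4,\Q)$, and verifying that the null-homology condition transforms predictably under $\mathrm{Aff}^+(U(V_4))$ will be substantially more involved than the single matrix of \S4; this is precisely where the Hooper--Schwartz model and its explicit affine generators are indispensable. A secondary difficulty is the careful matching of a deformed saddle connection, whose semicircle directions are constrained but not fully determined, to a single standard representative $S_j$, so that the remaining freedom is genuinely absorbed by the $\pm$ and by $\phi$.
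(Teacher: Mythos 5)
Your proposal has the right skeleton (degeneration to a saddle connection, reduction to standard directions via the affine group, filtering by Lemma~\ref{lem:from stable} and Lemma~\ref{lem:isos restriction}), but its central reduction fails. The finiteness you get from the Veech dichotomy applies to periodic \emph{directions} and to individual saddle-connection \emph{components}; it does not apply to the objects actually being classified, namely closed saddle connections in the paper's sense together with their semicircle decorations. These are arbitrarily long concatenations of components, with a binary choice at each vertex strike, so ``$\mathrm{Aff}^+(U(V_4))$ acts on saddle connections with finitely many orbits'' is a non sequitur for this notion, and the plan ``test one representative per orbit, absorb the rest into $\pm$ and $\phi$'' cannot produce the theorem: the answer is itself an infinite family, and the $S_j$ for distinct $j$ lie in distinct orbits (an affine diffeomorphism carries the cycle of components shadowed by $\mathring g$ to the cycle shadowed by its image, and $S_j$ traverses $4j+2$ components). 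What replaces this step in the paper is a combinatorial walk analysis: after reducing to the horizontal direction (Lemma~\ref{lem:enumeration}, which rests on the parabolic-conjugacy Lemma~\ref{lem:conjugate}), one lists the transition rules $L_k\to R_{k\pm2}$, $R_k\to L_{k\mp2}$ governed by semicircle direction and classifies the admissible closed walks.

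The infinite tree of walks is then pruned by an ingredient you never invoke, and homology alone cannot substitute for it: Corollary~\ref{cor:three cases} (Hooper's involution, Lemma~\ref{lem:hooper}, transported to isosceles triangles), which uses precisely the hypothesis that $\ov{\mc O}$ contains the $\frac{\pi}{4}$ ray to force $\mathring f$ to be invariant under $i'$ and to strike the midpoint of the base \emph{exactly twice}. This is what confines the walk to $L_3(L_1R_{-1})^jL_1L_3(R_1L_{-1})^jR_1$ (no $L_{-3}$) or to $S_j$; the first family is then killed by the homology computation via Proposition~9.9 of \cite{NearlyIsosc}, since $(p^*_1-p^*_{-1})(\phi^*(2\gamma_1+j(2\gamma_0-\beta_1-\beta_{-1})))\equiv 2r\not\equiv 0 \bmod 2n$. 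Your homology-only filter retains spurious curves: for $j_1+j_3=j_2+j_4$ the walk $L_3(L_1R_{-1})^{j_1}L_{-3}(L_{-1}R_1)^{j_2}L_3(L_1R_{-1})^{j_3}L_{-3}(L_{-1}R_1)^{j_4}$ is legal, its class is $[S_{j_1+j_3}]+[S_0]$, so it pushes to zero in $H_1(\mc DV_4,\Q)$ whenever the $S_j$ do, yet it strikes the base midpoint four times and is not $\pm\phi S_j$. Separately, your appeal to Theorem~\ref{thm:veech} at $n=4$ is circular: the paper's proof of that theorem explicitly sets the case $n=4$ aside, that case being covered only by the present theorem. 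The degeneration statement you need is legitimately available, but from Theorem~1.5 of \cite{NearlyIsosc}, not from Theorem~\ref{thm:veech}; the ``at least four strikes'' bound is not available to you at all.
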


It seems implausible that any such trajectory could exist, so Theorem~\ref{thm:V_4} can be taken as strong evidence for Hooper and
Schwartz's conjecture in the case of $V_4$.

We begin by constructing $U(V_n)$ together with a basis for $H_1(U(V_n),\Q)$
and determining the map $p^*:H_1(U(V_n),\Q)\to H_1(\mathcal DV_n)$ induced by the covering, after \S 9 of \cite{NearlyIsosc}.
Cutting $V_n$ along the line of symmetry and reflecting the resulting right triangle across the longer edges gives an $n$-gon.
Doing the same with the other half and gluing the two resulting $n$-gons along their opposite parallel edges gives us $U(V_n)$. This is illustrated
in Figure~\ref{fig:U(V_n)}.

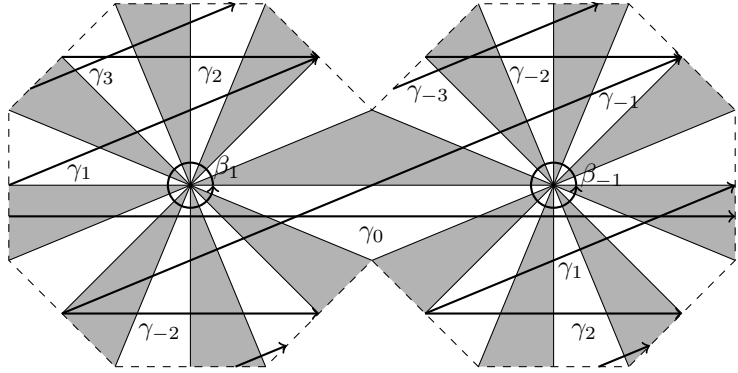
\begin{figure}
\begin{tikzpicture}[scale=2]
\draw[dashed] (0,0) -- (1,0) -- (1.7071,.7071) -- (2.4142,0) -- (3.4142,0) -- (4.1213,.7071) -- (4.1213,1.7071) -- (3.4142,2.4142)
-- (2.4142,2.4142) -- (1.7071,1.7071) -- (1,2.4142) -- (0,2.4142) -- (-.7071,1.7071) -- (-.7071,.7071) -- cycle;
\draw (4.1213,1.2071) -- (-.7071,1.2071);
\draw (0,0) -- (1,2.4142);
\draw (.5,0) -- (.5,2.4142);
\draw (1,0) -- (0,2.4142);
\draw (1.3535,.3535) -- (-.3535,2.0607);
\draw (1.7071,.7071) -- (-.7071,1.7071);
\draw (1.7071,1.7071) -- (-.7071,.7071);
\draw (1.3535,2.0626) -- (-.3535,.3535);
\begin{scope}[shift={(2.4142,0)}]
\draw (0,0) -- (1,2.4142);
\draw (.5,0) -- (.5,2.4142);
\draw (1,0) -- (0,2.4142);
\draw (1.3535,.3535) -- (-.3535,2.0607);
\draw (1.7071,.7071) -- (-.7071,1.7071);
\draw (1.7071,1.7071) -- (-.7071,.7071);
\draw (1.3535,2.0626) -- (-.3535,.3535);
\end{scope}
\fill[opacity = .3] (0.5,1.2071) -- (2.9142,1.2071) -- (1.7071,1.7071) -- cycle;
\fill[opacity = .3] (0.5,0) -- (1,0) -- (.5,1.2071) -- cycle;
\fill[opacity = .3] (1.353,.3535) -- (1.7071,.7071) -- (.5,1.2071) -- cycle;
\fill[opacity = .3] (1.3535,2.0606) -- (1,2.4142) -- (.5,1.2071) -- cycle;
\fill[opacity = .3] (0.5,2.4142) -- (0,2.4142) -- (.5,1.2071) -- cycle;
\fill[opacity = .3] (-.3535,2.0606) -- (-.7071,1.7071) -- (.5,1.2071) -- cycle;
\fill[opacity = .3] (-.7071,1.2071) -- (-.7071,.7071) -- (.5,1.2071) -- cycle;
\fill[opacity = .3] (-.3535,.3535) -- (0,0) -- (.5,1.2071) -- cycle;
\fill[opacity = .3] (1.7071,.7071) -- (2.0606,.3535) -- (2.9142,1.2071) -- cycle;
\fill[opacity = .3] (2.4142,0) -- (2.9142,0) -- (2.9142,1.2071) -- cycle;
\fill[opacity = .3] (3.4142,0) -- (3.7677,.3535) -- (2.9142,1.2071) -- cycle;
\fill[opacity = .3] (4.1213,.7071) -- (4.1213,1.2071) -- (2.9142,1.2071) -- cycle;
\fill[opacity = .3] (4.1213,1.7071) -- (3.7677,2.0606) -- (2.9142,1.2071) -- cycle;
\fill[opacity = .3] (3.4142,2.4142) -- (2.9142,2.4142) -- (2.9142,1.2071) -- cycle;
\fill[opacity = .3] (2.4142,2.4142) -- (2.0606,2.0606) -- (2.9142,1.2071) -- cycle;
\draw[thick, ->] (-.3535,.3535) -- (1.3535,.3535) node[pos=.38, below] {$\gamma_{-2}$};
\draw[thick, ->] (2.0606,2.0606) -- (3.7677,2.0606) node[pos=.41, below] {$\gamma_{-2}$};
\draw[thick, ->] (-.7071,1) -- (4.1213,1) node[midway, below] {$\gamma_0$};
\draw[thick, ->] (-.3535,2.0606) -- (1.3535,2.0606) node[pos=.58, below] {$\gamma_{2}$};
\draw[thick, ->] (2.0606,.3535) -- (3.7677,.3535) node[pos=.62, below] {$\gamma_{2}$};
\draw[thick, ->] (-.5657,1.8485) -- (.8,2.4142) node[pos=.35, below] {$\gamma_{3}$};
\draw[thick, ->] (3.2142,0) -- (3.5556,.1414);
\draw[thick, ->] (-.7071,1.2071) -- (1.3535,2.0606) node[pos=.1, right, inner sep=10pt] {$\gamma_1$};
\draw[thick, ->] (2.0606,.3535) -- (4.1213,1.2071) node[pos=.47, below] {$\gamma_1$};
\draw[thick, ->] (-.3535,.3535) -- (3.7677,2.0606) node[pos=.9, below] {$\gamma_{-1}$};
\draw[thick, ->] (1.8485,1.8485) -- (3.2142,2.4142) node[pos=.17, below, inner sep=4pt] {$\gamma_{-3}$};
\draw[thick, ->] (.8,0) -- (1.1414,.1414);
\draw[thick, ->] (.65,1.2071) arc (0:360:.15cm) node[pos=.15, right] {$\beta_1$};
\draw[thick, ->] (3.0642,1.2071) arc (0:360:.15cm) node[pos=.07, right, inner sep=2pt] {$\beta_{-1}$};
\end{tikzpicture}
\caption{The unfolding $U(V_4)$. Edges on the left octagon are identified with the edges parallel and opposite them on the right octagon.}
\label{fig:U(V_n)}
\end{figure}

If we let $\alpha_1$ and $\alpha_{-1}$ be loops around the punctures at the isosceles angles in $\mathcal DV_n$, then with appropriate
choice of orientation we have
$$p^*(\beta_i)=2n\alpha_i,\quad
p^*(\gamma_i)=\begin{cases}
(i+n)\alpha_1-(i+n)\alpha_2 &\text{if } i<0\\
n\alpha_1 + n\alpha_2 &\text{if } i=0\\
(i-n)\alpha_1-(i-n)\alpha_2 &\text{if } i>0
\end{cases}$$
We let $p^*_1$ and $p^*_{-1}$ denote the composition of $p^*$ with the projections onto the 
$\alpha_1$ and $\alpha_{-1}$ components respectively.

\begin{rem}
Because we build heavily on the work of Hooper and Schwartz in \cite{NearlyIsosc}, it is worth noting two differences in notation:
our $p^*$ is their $\phi$, and our $U(V_n)$ is their $S(V_n)$.
Additionally, we find it convenient to work with homology rather than cohomology,
and it should be noted that $p^*$ denotes the homology map induced by $p$, rather than its cohomological dual.
\end{rem}

The enumeration of saddle connections on $V_n$ is more complicated than for the 30-60-90 triangle.
Again we make use of the affine automorphisms of $U(V_n)$.

\begin{defn}
The \emph{Veech group} $\Gamma(U(P))$ of $U(P)$ is the image in $\mathrm{PSL}_2(\R)$
of the group $\mathrm{Aff}^+(U(P))$ of orientation-preserving affine diffeomorphisms under the derivative map.
\end{defn}

\begin{rem}
This is the same definition used by Veech in \cite{Veech}. It is worth noting that Hooper and Schwartz define $\Gamma(U(V_n))$ 
in \cite{NearlyIsosc} as the corresponding preimage in $\mathrm{SL}_2(\R)$, but this difference is largely immaterial when $4\mid n$ as
we shall see that $-I\in \Gamma(U(V_n))$.
\end{rem}

First we use $\Gamma(U(P))$ to derive the generators for $\mathrm{Aff}^+(U(P))$ used in \cite{NearlyIsosc}.

\begin{lem}\label{lem:generators}
Let $\sigma$ be the involution which exchanges the two $n$-gons in $U(V_n)$, $\tau_e$ the automorphism which acts by simultaneous Dehn twists
through each of the cylinders containing $\gamma_k$ for $k$ even, and $\tau_o$ the corresponding automorphism for $k$ odd.
If $4\mid n$, then $\mathrm{Aff}^+(U(V_n))=\langle \sigma,\tau_e,\tau_o\rangle$.
\end{lem}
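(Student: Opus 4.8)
The plan is to show that $\mathrm{Aff}^+(U(V_n))$ is generated by the three named automorphisms by analyzing the structure of the derivative map $D:\mathrm{Aff}^+(U(V_n))\to \Gamma(U(V_n))\subset \mathrm{PSL}_2(\R)$. The key observation is that this map fits into a short exact sequence
\[
1\longrightarrow \ker D \longrightarrow \mathrm{Aff}^+(U(V_n)) \xrightarrow{\;D\;} \Gamma(U(V_n)) \longrightarrow 1,
\]
so it suffices to control both the kernel and the image, and to exhibit preimages of a generating set of $\Gamma(U(V_n))$ inside $\langle \sigma,\tau_e,\tau_o\rangle$. First I would identify the kernel: an orientation-preserving affine automorphism with trivial derivative is a translation automorphism of the flat surface $U(V_n)$. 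Since $U(V_n)$ is built from two $n$-gons glued along parallel edges, and since $-I\in\Gamma(U(V_n))$ when $4\mid n$ (as remarked in the text), I would argue that the only nontrivial translation automorphism is the involution $\sigma$ exchanging the two $n$-gons—so $\ker D=\langle\sigma\rangle\cong\Z/2$. This requires checking that no nontrivial translation preserves the puncture/cone-point structure and the gluing pattern, which is a finite combinatorial verification using the cylinder decomposition visible in Figure~\ref{fig:U(V_n)}.

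Next I would pin down the image $\Gamma(U(V_n))$ and its generators. Because $U(V_n)$ is a Veech surface (this is the entire point of the $V_n$ being Veech triangles), its Veech group is a lattice, and the work of Hooper and Schwartz in \cite{NearlyIsosc} already computes it. The two cylinder decompositions of $U(V_n)$—one in the horizontal direction (the $\gamma_k$ for $k$ even and for $k$ odd lie in cylinders) and its image under the relevant rotation—give rise to parabolic elements of $\Gamma(U(V_n))$ via Dehn twists. I would match $D(\tau_e)$ and $D(\tau_o)$ to these parabolic generators, and argue that together with $-I$ (which is $D(\sigma)$ up to the kernel consideration) they generate $\Gamma(U(V_n))$. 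The precise parabolic matrices follow from the moduli of the cylinders, which are determined by the geometry of the $n$-gon; I would cite the explicit computation in \cite{NearlyIsosc} rather than redo it.

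With the kernel and a generating set of the image in hand, the conclusion is nearly formal: given any $\phi\in\mathrm{Aff}^+(U(V_n))$, write $D(\phi)$ as a word in $D(\tau_e),D(\tau_o)$ and the central element; the corresponding word $w$ in $\tau_e,\tau_o,\sigma$ satisfies $D(\phi w^{-1})=\id$, so $\phi w^{-1}\in\ker D=\langle\sigma\rangle$, whence $\phi\in\langle\sigma,\tau_e,\tau_o\rangle$. I expect the main obstacle to be the identification of $\ker D$—specifically, verifying that $U(V_n)$ admits no unexpected translation automorphisms beyond $\sigma$ when $4\mid n$. This is where the hypothesis $4\mid n$ does real work (it guarantees $-I\in\Gamma$ and controls the symmetry of the cylinder decomposition), and a careless argument could miss a hidden translation symmetry coming from the half-hexagon tiling. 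I would handle this by examining the action on the cone points of $U(V_n)$ together with the set of distinct cylinder circumferences, showing that any translation automorphism must fix each cylinder setwise and hence be determined by its effect on the two $n$-gons, leaving only $\id$ and $\sigma$.
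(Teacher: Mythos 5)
Your overall skeleton --- the exact sequence $1\to\ker D\to \mathrm{Aff}^+(U(V_n))\xrightarrow{\;D\;}\Gamma(U(V_n))\to 1$, the identification $\ker D=\langle\sigma\rangle$, and then lifting a generating set of the image --- is the same as the paper's. Your kernel step is workable in outline, though the paper's argument is cleaner than your cylinder-by-cylinder verification: the central punctures of the two $n$-gons are the only punctures of degree $1$, so any $\phi$ with trivial derivative permutes them; after composing with $\sigma$ it fixes one of them, and an automorphism with a fixed point and identity derivative is the identity. Note that this step requires no hypothesis on $n$ at all.

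The genuine gap is in your treatment of the image. The claim that the two parabolics $D\tau_e,D\tau_o$ generate $\Gamma(U(V_n))$ is the mathematical heart of the lemma, and you dispose of it by citation. That citation is problematic twice over. First, $\sigma,\tau_e,\tau_o$ are precisely the generators \emph{used} in \cite{NearlyIsosc}; this lemma exists because the generation claim is what needs proof, so outsourcing it to \cite{NearlyIsosc} risks circularity. What the literature actually supplies is Veech's computation in \cite{Veech} (Theorem~5.8 and \S 7), whose generating set is different: two parabolics $\alpha,\alpha'$ together with the \emph{elliptic} element $c_n^2$, a rotation by $2\pi/n$. Second, a lattice containing two parabolics need not be generated by them, so one must express Veech's elliptic generator in terms of $D\tau_e,D\tau_o$ and conversely. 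The paper does this by explicit algebra,
\[
c_n^2=(D\tau_oD\tau_e)^{n/2+1},\qquad
\alpha=c_n^{-n/2}D\tau_e^{-1}c_n^{n/2},\qquad
\alpha'=c_n^{-n/2}D\tau_o^{-1}c_n^{n/2},
\]
and the hypothesis $4\mid n$ does its real work exactly here: it guarantees $c_n^{n/2}=(c_n^2)^{n/4}$ lies in $\langle D\tau_e,D\tau_o\rangle$, so the conjugations above stay inside that subgroup. You instead locate the role of $4\mid n$ in the kernel identification, which is a mislocation; relatedly, your parenthetical that $-I$ ``is $D(\sigma)$ up to the kernel consideration'' is wrong --- $\sigma$ is a translation automorphism with $D\sigma=I$, and an automorphism with derivative $-I$ must itself be produced as a word in $\tau_e,\tau_o$, which is possible precisely because $4\mid n$.
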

\begin{proof}
First we show that the kernel of $\mathrm{Aff}^+(U(V_n))\to \Gamma(U(V_n))$ is $\langle \sigma\rangle$.
Suppose $\phi\in \mathrm{Aff}^+(U(V_n))$ has $D\phi=I$. Note that $\phi$ must preserve the punctures of $U(V_n)$ of a given degree,
in particular the central punctures of each $n$-gon which have degree $1$. Thus $\phi$ either fixes each puncture or exchanges them,
so after possibly applying $\sigma$, $\phi$ fixes each puncture. But then $\phi$ fixes a point, so since $D\phi=I$ it must be the identity.

It remains to show that $\langle D\tau_e,D\tau_o\rangle = \Gamma(U(V_n))$. Note that
$$D\tau_e = \begin{pmatrix} 1 & 2\cot\frac{\pi}{n}\\ 0 & 1\end{pmatrix},\quad
D\tau_o = c_n\begin{pmatrix} 1 & 2\cot\frac{\pi}{n}\\ 0 & 1\end{pmatrix}c_n^{-1}$$
where $c_n$ is a counter-clockwise rotation by $\frac{\pi}{n}$. Theorem~5.8 together with \S 7 of \cite{Veech}
prove that $\Gamma(U(V_n))$ is generated by the three elements
$$\alpha=\begin{pmatrix} 1 & 0\\ 2\cot\frac{\pi}{n} & 1\end{pmatrix},\quad
\alpha'=c_n\begin{pmatrix} 1 & 0\\ 2\cot\frac{\pi}{n} & 1\end{pmatrix}c_n^{-1},\quad c_n^2.$$
Since 
\begin{align*}
c_n^2 &=(D\tau_oD\tau_e)^{n/2+1}\\
\alpha &= c_n^{-n/2}D\tau_e^{-1} c_n^{n/2}\\
\alpha' &= c_n^{-n/2}D\tau_o^{-1} c_n^{n/2}
\end{align*}
and $4\mid n$, the two sets generate the same group.
\end{proof}

We obtain an enumeration of the saddle connections almost identical to the enumeration of trajectories in
Lemma~9.5 of \cite{NearlyIsosc}. First however we require a lemma on the structure of $\Gamma(U(V_n))$, which is well-known
but to the author's knowledge no proof of it exists in literature.

\begin{lem}\label{lem:conjugate}
Any parabolic element of $\Gamma(U(V_n))$ is conjugate to $D\tau_e^k$ or $D\tau_o^k$ for some $k$,
i.e. every maximal parabolic subgroup of $\Gamma(U(V_n))$ is conjugate to $\langle D\tau_e\rangle$ or $\langle D\tau_o\rangle$.
\end{lem}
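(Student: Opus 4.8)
The plan is to exhibit $\Gamma := \Gamma(U(V_n))$ as an index-two subgroup of a Hecke triangle group and then count cusps. Recall the standard fact that for a non-cocompact lattice in $\mathrm{PSL}_2(\R)$ the maximal parabolic subgroups are exactly the stabilizers of the cusps, and two of them are conjugate iff the corresponding parabolic fixed points lie in the same $\Gamma$-orbit; hence the number of conjugacy classes of maximal parabolic subgroups equals the number of cusps. Since $V_n$ is a Veech triangle, $\Gamma$ is such a lattice, so it suffices to show $\Gamma$ has exactly two cusps and that their stabilizers are $\langle D\tau_e\rangle$ and $\langle D\tau_o\rangle$.

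First I would pass to the overgroup $\Gamma^+ := \langle \Gamma, c_n\rangle$. By Lemma~\ref{lem:generators} we have $D\tau_o = c_n D\tau_e c_n^{-1}$, so $\Gamma^+ = \langle c_n, D\tau_e\rangle$. I claim $\Gamma^+$ is the $(2,n,\infty)$ triangle group. Here $c_n$ is elliptic of order $n$, and a direct trace computation gives $\mathrm{tr}(D\tau_e c_n^{-1}) = 2\cos\frac{\pi}{n} - 2\cot\frac{\pi}{n}\sin\frac{\pi}{n} = 0$, so $R := D\tau_e c_n^{-1}$ is an elliptic involution. The three elements $R$, $c_n$, $D\tau_e^{-1}$ have orders $2$, $n$, $\infty$ and product $R\,c_n\,(Rc_n)^{-1} = 1$; since $\Gamma^+$ is discrete this forces $\Gamma^+$ to be exactly the $(2,n,\infty)$ triangle group, which has a single cusp, represented by $\infty$, whose stabilizer is generated by the primitive parabolic $Rc_n = D\tau_e$.

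Next I would check $[\Gamma^+:\Gamma] = 2$. The relation $c_n^2 = (D\tau_o D\tau_e)^{n/2+1}$ from Lemma~\ref{lem:generators} shows $c_n^2\in\Gamma$, so the index is at most two, and it remains to prove $c_n\notin\Gamma$. For this I would use the abelianization $\Gamma^+\to\Z/2\times\Z/n$ determined by $R\mapsto(1,0)$, $c_n\mapsto(0,1)$. Then $D\tau_e = Rc_n\mapsto(1,1)$ and likewise $D\tau_o\mapsto(1,1)$, so the image of $\Gamma$ is the cyclic group $\langle(1,1)\rangle$; because $n$ is even, $(0,1)\notin\langle(1,1)\rangle$, and therefore $c_n\notin\Gamma$.

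Finally I would count cusps. Every parabolic fixed point of $\Gamma$ is a parabolic fixed point of $\Gamma^+$, hence lies in the unique $\Gamma^+$-orbit of $\infty$; since $[\Gamma^+:\Gamma]=2$, this orbit splits into at most two $\Gamma$-orbits. The points $\infty$ and $c_n\cdot\infty$ (the fixed points of $D\tau_e$ and $D\tau_o$) are $\Gamma$-inequivalent, for $\gamma\infty = c_n\infty$ with $\gamma\in\Gamma$ would force $c_n^{-1}\gamma\in\mathrm{Stab}_{\Gamma^+}(\infty) = \langle D\tau_e\rangle\subseteq\Gamma$, hence $c_n\in\Gamma$, which we have excluded. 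Thus there are exactly two cusps, with stabilizers $\langle D\tau_e\rangle$ and $c_n\langle D\tau_e\rangle c_n^{-1} = \langle D\tau_o\rangle$, and the lemma follows. The main obstacle is justifying that $\Gamma^+$ is genuinely the $(2,n,\infty)$ triangle group with a single cusp of width one: the trace computation identifies the involution, but one still needs the discreteness coming from the Veech property both to exclude extra relations and to guarantee that $\langle D\tau_e\rangle$, rather than some proper parabolic overgroup, is the full cusp stabilizer.
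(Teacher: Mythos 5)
Your strategy is genuinely different from the paper's and is viable in outline: the paper never leaves $\Gamma:=\Gamma(U(V_n))$, instead trapping the vertices at infinity of a Dirichlet region inside the strip $|\Re(z)|\le\cot\frac{\pi}{n}$ and invoking Katok's bijection between orbits of such vertices and conjugacy classes of maximal parabolic subgroups, whereas you pass to the overgroup $\Gamma^+=\langle\Gamma,c_n\rangle$, identify it with a Hecke triangle group, and count cusps of the index-two subgroup. Several of your supporting steps are correct as stated: the trace computation giving $\mathrm{tr}(D\tau_e c_n^{-1})=0$, hence an involution $R$ with $Rc_n=D\tau_e$; the observation that $c_n$ normalizes $\Gamma$ and $c_n^2=(D\tau_oD\tau_e)^{n/2+1}\in\Gamma$, so $[\Gamma^+:\Gamma]\le 2$ and $\Gamma^+$ is discrete; and, \emph{granting} that $\Gamma^+$ is the $(2,n,\infty)$ triangle group on the generators $R,c_n$ with cusp stabilizer exactly $\langle D\tau_e\rangle$, your abelianization argument for $c_n\notin\Gamma$ and the splitting of the single $\Gamma^+$-cusp into the two $\Gamma$-cusps $\infty$ and $c_n\infty$ are both sound.

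The genuine gap is the step you yourself flag as ``the main obstacle,'' and it is the crux of the entire argument. The assertion that discreteness ``forces $\Gamma^+$ to be exactly the $(2,n,\infty)$ triangle group'' is not a proof: discreteness of a group generated by an elliptic of order $2$ and an elliptic of order $n$ with parabolic product does not, by itself, exclude extra relations (needed for your abelianization map $\Gamma^+\to\Z/2\times\Z/n$ to exist and send $R,c_n$ to the standard generators), nor does it show that $D\tau_e=Rc_n$ is a \emph{primitive} parabolic rather than a proper power of the generator of $\mathrm{Stab}_{\Gamma^+}(\infty)$ (needed for your inequivalence argument, since if $c_n^{-1}\gamma$ only lay in some larger cyclic group $\langle P\rangle\not\subseteq\Gamma$ you would get no contradiction). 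Filling this requires real input, e.g.\ citing the classification of two-generator Fuchsian groups (Knapp, Purzitsky--Rosenberger), or arguing structurally: $\Gamma^+$ is a non-cocompact lattice since $\Gamma$ is one (Veech); every such lattice is a free product of cyclic groups, and a Grushko rank count on a two-generated one forces the signature $(0;p,q;1)$; the abelianization $\Z/p\times\Z/q$ is generated by images of elements of orders $2$ and $n$, which pins down $\{p,q\}=\{2,n\}$ and shows $R,c_n$ generate the two free factors; finally a translation-length argument on the Bass--Serre tree of $\Z/2*\Z/n$ shows $Rc_n$ is not a proper power, giving cusp width one. None of this is in your write-up, while the paper's Dirichlet-domain route avoids the issue entirely because it never needs to identify the overgroup at all.
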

\begin{proof}
First we show that $\Gamma(U(V_n))$ is a Fuchsian group. This follows from Proposition~2.7 of \cite{Veech}, but can easily be proved directly.
If $x_n\in \Gamma(U(V_n))$ are such that $x_n\to I$, then by taking preimages and
passing to a subsequence we have $y_n\in \mathrm{Aff}^+(U(V_n))$ such that $Dy_n=x_n$ and $y_n\to \mathrm{id}$ or $\sigma$.
By composing with $\sigma$ if necessary we have $y_n\to \mathrm{id}$. But since $y_n$ must take saddle connections of $U(V_n)$ to
saddle connections, clearly $y_n$ must be eventually constant, thus $x_n$ is as well.

In \S 7 of \cite{Veech} it is shown that $\Gamma(U(V_n))$ is a non-cocompact lattice.
Thus by Corollary~4.2.6 in \cite{Fuchsian}, the conjugacy classes of maximal parabolic subgroups are in bijection with the orbits of
vertices of infinity of any Dirichlet region
$$D(p,\Gamma(U(V_n))) = \{z\in\mathbb H: \rho(z,p)\le \rho(T(z),p), \forall T\in \Gamma(U(V_n))\}$$
where $p$ is not fixed by any element of $\Gamma(U(V_n))$. Taking $p$ in the imaginary line and considering the actions of $D\tau_e^{\pm 1}$,
we see that $D(p,\Gamma(U(V_n)))$ is contained in the strip $-\cot\frac{\pi}{n}\le \Re(z)\le \cot\frac{\pi}{n}$,
thus any vertex at infinity must be $\infty$ or lie in the segment $\left[-\cot\frac{\pi}{n},\cot\frac{\pi}{n}\right]$.
Considering the action of $Dc_n^2$ reduces the possibilities to $\infty$ and $\pm\cot\frac{\pi}{n}$. Since 
$D\tau_e\left(-\cot\frac{\pi}{n}\right)=\cot\frac{\pi}{n}$, there are at most two conjugacy classes of maximal parabolic subgroups.
Since $\langle D\tau_e\rangle$ fixes $\infty$ and $\langle D\tau_o\rangle$ fixes $\cot\frac{\pi}{n}$, these classes are as claimed.
\end{proof}

\begin{lem}\label{lem:enumeration}
Any saddle connection on $U(V_n)$ is the image of a saddle connection in the $0$ or $\frac{\pi}{2n}$ direction under some
element of $\mathrm{Aff}^+(U(V_n))$.
\end{lem}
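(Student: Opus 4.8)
The plan is to reduce the enumeration to the Veech dichotomy together with the classification of parabolic subgroups in Lemma~\ref{lem:conjugate}. The starting point is that $U(V_n)$ is a Veech surface: its Veech group $\Gamma(U(V_n))$ is a lattice, as recorded in the proof of Lemma~\ref{lem:conjugate} (following \S 7 of \cite{Veech}). Consequently the Veech dichotomy applies: for every direction $\theta$, either $U(V_n)$ decomposes into finitely many cylinders whose core curves lie in direction $\theta$, or the straight-line flow in direction $\theta$ is minimal. A direction carrying a saddle connection is not minimal, so it must fall into the first, \emph{completely periodic} case. Moreover, on a lattice surface every completely periodic direction is \emph{parabolic}: the cylinders in such a direction have commensurable moduli, so an appropriate product of their Dehn twists is an affine automorphism of $U(V_n)$ whose derivative is a nontrivial parabolic element of $\Gamma(U(V_n))$ fixing $\theta$. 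Thus the direction of any saddle connection is fixed by a nontrivial parabolic element of $\Gamma(U(V_n))$.

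First I would record the correspondence between parabolic directions and conjugacy classes of maximal parabolic subgroups. If $P\in\Gamma(U(V_n))$ is parabolic and fixes the direction $\theta\in\mathbb{RP}^1=\partial\mathbb H$, and $g\in\Gamma(U(V_n))$, then $gPg^{-1}$ is parabolic and fixes $g\cdot\theta$; hence $\Gamma(U(V_n))$ permutes the parabolic directions with orbits in bijection with the conjugacy classes of maximal parabolic subgroups. By Lemma~\ref{lem:conjugate} there are exactly two such classes, generated by $D\tau_e$ and $D\tau_o$, which fix the $0$ and $\frac{\pi}{2n}$ directions respectively (the directions of the even and odd cylinder decompositions of $U(V_n)$). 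Therefore every parabolic direction, and in particular the direction $\theta$ of any saddle connection, lies in the $\Gamma(U(V_n))$-orbit of $0$ or of $\frac{\pi}{2n}$.

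It then remains to lift this statement about directions to one about saddle connections. Given a saddle connection $s$ in direction $\theta$, choose $g\in\Gamma(U(V_n))$ and $\theta_0\in\{0,\tfrac{\pi}{2n}\}$ with $g\cdot\theta_0=\theta$, and lift $g$ to an element $\phi\in\mathrm{Aff}^+(U(V_n))$ with $D\phi=g$, which is possible by the definition of the Veech group. Since affine automorphisms carry saddle connections to saddle connections and transform directions by their derivative, $\phi^{-1}(s)$ is a saddle connection in direction $g^{-1}\cdot\theta=\theta_0$, and the identity $s=\phi(\phi^{-1}(s))$ exhibits $s$ as the image under $\phi$ of a saddle connection in the $0$ or $\frac{\pi}{2n}$ direction, as required.

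The main obstacle is the first paragraph: one must invoke the Veech dichotomy in the correct form and, in particular, justify that a direction carrying a saddle connection is necessarily parabolic rather than merely non-minimal in some weaker sense. This rests on the lattice property of $\Gamma(U(V_n))$ established via \cite{Veech}, together with the standard fact that completely periodic directions on lattice surfaces have commensurable cylinder moduli; once this is in hand, the remaining steps are formal consequences of Lemma~\ref{lem:conjugate} and the definition of $\mathrm{Aff}^+(U(V_n))$.
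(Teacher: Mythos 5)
Your proof is correct and takes essentially the same route as the paper: both arguments rest on the lattice property of $\Gamma(U(V_n))$, the fact that the direction of a saddle connection is stabilized by a nontrivial parabolic, Lemma~\ref{lem:conjugate} to reduce to the two classes $\langle D\tau_e\rangle$ and $\langle D\tau_o\rangle$, and transport by an element of $\mathrm{Aff}^+(U(V_n))$. The only difference is packaging: the paper obtains the parabolicity in one step by citing Theorem~2.10 of \cite{Veech}, which together with the lattice property yields $\phi\in\mathrm{Aff}^+(U(V_n))$ with $D\phi\ne I$ and $\phi f=f$ (thereby sidestepping the dichotomy/commensurable-moduli discussion you flag as the main obstacle), and then conjugates $\phi$ via Lemma~\ref{lem:conjugate} rather than transporting the direction and lifting at the end.
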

\begin{proof}
Suppose $f$ is a saddle connection on $U(V_n)$.
By Theorem~2.10 and the fact that $\Gamma(U(V_n))$ is a lattice, we have some $\phi\in \mathrm{Aff}^+(U(V_n))$ such that $D\phi\ne I$
and $\phi f=f$. Thus $D\phi$ must be a parabolic element of $\Gamma(U(V_n))$, so we have some $\psi\in \mathrm{Aff}^+(U(V_n))$ and some $k$ such that
$D\psi\phi\psi^{-1}=D\tau_e^k$ or $D\tau_o^k$. Thus $\psi\phi\psi^{-1}$ is one of $\{\pm\tau_e^k,\pm\sigma\tau_e^k,\pm\tau_o^k,\pm\sigma\tau_o^k\}$,
so one of these fixes $\psi f$. Thus $\psi f$ must be in the $0$ or $\frac{\pi}{2n}$ direction.
\end{proof}

Because we restrict our attention to orbit tiles near the line of isosceles triangles, we are able to place powerful restrictions on the
saddle connections which we need to consider. First, we use Lemma~\ref{lem:from stable} to
generalize a result of Hooper in \cite{RightTri} to include saddle connections.

\begin{lem}\label{lem:hooper}
Let $f$ be a saddle connection on a right triangle $T$ with angles that are not rational multiples of $\pi$, 
which arises from a trajectory via Construction~\ref{con:new trajectory}.
Then $f$ strikes the right-angled vertex exactly twice in its period.
Furthermore, if $i$ is the unique involution of $U(T)$ defined in \cite{RightTri}, $f$ is invariant under $i$.
\end{lem}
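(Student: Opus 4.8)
The plan is to deduce both assertions from Lemma~\ref{lem:from stable} together with the structure of the covering at the right-angle vertex. Write $v_0$ for the right-angle vertex and $v_1,v_2$ for the acute vertices, whose angles $\alpha_1,\alpha_2$ are irrational multiples of $\pi$. Since $f$ arises via Construction~\ref{con:new trajectory} from a stable trajectory, Lemma~\ref{lem:from stable} gives that $p(\mathring f)$ is null-homologous on the three-punctured sphere $\mc DT$, equivalently that $\mathring f$ lifts to a closed loop in $U_g(T)$. The first thing I would record is that Hooper's involution $i$ is a deck transformation of the intermediate covering $q\colon U(T)\to\mc DT$ through which $p$ factors. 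Indeed, the rotation $\rho$ by $\pi$ about $v_0$, realized in $H_n$ as the product of the two reflections in the legs at $v_0$, lies in $H_n$ but not in $\tau_n(T)$, while $\rho^2$ generates $\tau_n(T)/\tau_n$; this is exactly the single relation, beyond the angle-sum relation, forced by the right angle (cf.\ the proof of Lemma~\ref{lem:lin ind}). Hence $\rho$ descends to the involution $i$ of $U(T)$, $i$ is a nontrivial order-two deck transformation of $q$, and $q\circ i=q$.

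Next I would show that $f$ must strike $v_0$. The covering $U_g(T)\to U(T)$ is the $\Z$-cover that unwinds the cone point $v_0$: its deck group is generated by $\rho^2$, the class of a loop once around $v_0$, so its monodromy homomorphism on $\pi_1(U(T))$ measures net winding about $v_0$. By Lemma~\ref{lem:from stable}, stability of $f$ is exactly the closing-up of $\mathring f$ in $U_g(T)$, i.e.\ the vanishing of this monodromy on $\mathring f$. Because $v_1,v_2$ are genuine (infinite-angle) cone points of $U(T)$, the semicircle direction at an acute vertex is forced and carries no freedom, whereas only a semicircle around the integrable vertex $v_0$ can alter the monodromy. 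If $f$ struck no copy of $v_0$, then $\mathring f$ would close up in $U_g(T)$ with no $v_0$-detour available to correct it, producing a stable periodic billiard path on $T$ that does not involve the right angle; this is excluded by Hooper's theorem in \cite{RightTri} that right triangles carry no stable trajectory. Hence some semicircle of $\mathring f$ encircles $v_0$, i.e.\ $f$ strikes $v_0$.

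I would then obtain $i$-invariance. Hooper shows in \cite{RightTri} that the fixed-point set of $i$ is exactly the set of copies of $v_0$; thus the strike just produced occurs at a fixed point of $i$, and near that point $f$ is the straight segment through $v_0$, which $\rho$ carries to itself. Therefore $f$ and $if$ agree on a neighborhood of this strike, and since $q\circ i=q$ both are lifts through $q$ of the loop $q\circ f$; unique path-lifting over the complement of the punctures, together with continuity at the remaining (acute) vertices, then forces $if=f$, so $f$ is $i$-invariant. Finally, $i$ restricts to an involution of the circle $f$ which, having the fixed point just found, must be a reflection with exactly two fixed points. As these two fixed points are copies of $v_0$ and, conversely, every copy of $v_0$ on $f$ is fixed by $i$ (again by Hooper's description of $\mathrm{Fix}(i)$), the strikes of $v_0$ are precisely these two points, so $f$ strikes $v_0$ exactly twice in its period.

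The step I expect to be the main obstacle is the middle one, showing that $f$ strikes $v_0$: one must make precise that the stability obstruction is concentrated at the right-angle vertex and is genuinely nonzero unless a $v_0$-semicircle is present, which is where the irrationality of $\alpha_1,\alpha_2$ (forbidding any relation that would let an acute detour change the monodromy) and Hooper's instability theorem both enter. Once this is in hand, the remaining bookkeeping—matching the directions of the two $v_0$-semicircles against the vanishing winding number $w_{v_0}=0$ from null-homology, and invoking Hooper's computation of $\mathrm{Fix}(i)$—is routine.
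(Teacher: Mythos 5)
Your identification of Hooper's involution with the deck transformation induced by the rotation $\rho$ about $v_0$ (with $\rho^2$ generating $\tau_n(T)/\tau_n\cong\Z$, using irrationality) is correct and matches the covering-space structure of the paper, and your endgame (local agreement at a fixed point of $i$ propagating along the curve, then counting fixed points of an orientation-reversing involution of a circle) is essentially the paper's second paragraph. Two technical slips there, though: near the strike, $\rho$ preserves the segment through $v_0$ but reverses the direction of travel, so what propagates is $f=-i(f)$ (set-wise invariance, orientation reversed), not $i\circ f=f$ as parametrized lifts; and ``unique path-lifting'' is not available, because after filling in the copies of $v_0$ the map $U(T)\to\mathcal{D}T$ is a degree-two \emph{branched} cover at exactly those points (cone angle $2\pi$ over cone angle $\pi$), which is precisely where lifts can split. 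The paper instead matches $g$ and $-i(g)$ by noting they leave the fixed point with the same tangent and, by the remark it adds to Hooper's Proposition~4, take their semicircles in the same directions, hence coincide.

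The genuine gap is the step you yourself flag as the main obstacle: showing $f$ strikes $v_0$ at all. You argue that if $f$ struck no copy of $v_0$, the null-homologous curve $\mathring f$ would ``produce a stable periodic billiard path on $T$,'' contradicting Hooper's theorem. But $\mathring f$ is not a billiard path: it is a union of saddle-connection segments with semicircular detours around acute-vertex punctures, and null-homology of $p(\mathring f)$ does not yield any closed billiard trajectory on $T$ itself -- if the homotopy class had a straight representative, $f$ would not be a saddle connection. Hooper's \emph{theorem}, which concerns honest trajectories on right triangles (and the trajectory here lives on nearby, generally non-right triangles), therefore cannot be invoked as a black box. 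Nor does your monodromy framing close the gap: even granting that the monodromy of $U_g(T)\to U(T)$ is net winding about copies of $v_0$, its vanishing is entirely consistent with $f$ never striking $v_0$, since a curve can simply fail to wind around those punctures; there is nothing that ``needs correcting.'' What is required is to run Hooper's \emph{proof} -- the homological/intersection argument organized around the dichotomy of Proposition~4 -- on curves of the form $\mathring f$, concluding that if $\mathring f$ and $i(\mathring f)$ are disjoint then $p(\mathring f)$ is not null-homologous; stability (via Lemma~\ref{lem:from stable}) then forces $\mathring f=-i(\mathring f)$, which simultaneously produces the strike and the invariance. That adaptation, whose only new ingredient is the observation that the semicircles in $-i(\mathring f)$ are both reflected and reversed and hence run in the same directions as those of $\mathring f$, is exactly what the paper's first paragraph supplies and what your proposal is missing.
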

\begin{proof}
Theorem~3 of \cite{RightTri} proves a similar result for trajectories. The same proof shows that the image of $\mathring f$ in $\mathcal D T$
is not null-homologous, almost without modification. One remark is necessary for Proposition~4: 
if $\mathring f$ and $i(\mathring f)$ intersect, where $i$ is
the unique involution of $U(T)$, then the semicircles in $-i(\mathring f)$ have been both reflected and reversed, thus are in the same direction as
those in $\mathring f$, so $\mathring f=-i(\mathring f)$.

If $f$ strikes the right-angled vertex, let $g$ be $\mathring f$ 
with the modification that no semicircle is introduced around the right-angled vertex.
Then since this vertex is fixed by $i$ we have that $g$ and $-i(g)$ intersect and travel in the same direction,
and by our previous remark they have semicircles in the same direction, thus must agree. Hence $f$ and $-i(f)$ agree as well.
Furthermore $i$ is an orientation-reversing isometry of the image of $g$, so must have exactly two fixed points, thus
$g$ and hence $f$ strike the right-angled vertex exactly twice.
\end{proof}

\begin{rem}
The reader is encouraged to verify that the proof in \cite{RightTri} goes through as claimed. Some notational differences should be noted:
our $U(T)$ is Hooper's $\mathrm{MT}(T)$, our $\mathcal DT$ is his $\mathcal D$, and our $f$ is his $\tilde \gamma$.
\end{rem}

This gives us a corollary for isosceles triangles, similar to Corollary~6 in \cite{RightTri}.

\begin{cor}\label{cor:three cases}
Let $f$ be a saddle connection on an isosceles triangle $T$ which arises via Construction~\ref{con:new trajectory} 
from a stable trajectory with orbit tile $\mathcal O$
such that $\ov{\mathcal O}$ contains an interval of isosceles triangles containing $T$.
Then $f$ strikes the midpoint of the base of $T$ exactly twice in its period, 
and is invariant under the map $i'$ obtained by rotating each copy of $T$ in $U(T)$ by $\pi$ about the midpoint of its base.
\end{cor}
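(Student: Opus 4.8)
The plan is to reduce the statement to the right-triangle case of Lemma~\ref{lem:hooper}, mirroring the passage from Lemma~\ref{lem:right restriction} to Lemma~\ref{lem:isos restriction}. First I would bisect $T$ along its axis of symmetry to obtain a right triangle $T'$ whose right-angled vertex is exactly the midpoint of the base of $T$. Quotienting by the reflective symmetry of $T$ carries the saddle connection $f$ to a saddle connection $f'$ on $T'$; under this quotient the rotation $i'$ about the base midpoints descends to the involution $i$ of $U(T')$ from \cite{RightTri}, and a strike of $f'$ at the right-angled vertex corresponds to a strike of $f$ at the base midpoint. Because the stable trajectory producing $f$ lives on isosceles triangles near $T$, bisecting it produces a trajectory on right triangles near $T'$, so $f'$ arises from a trajectory via Construction~\ref{con:new trajectory} and the first hypothesis of Lemma~\ref{lem:hooper} is met.

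The second hypothesis of Lemma~\ref{lem:hooper}, that the angles of $T'$ be irrational multiples of $\pi$, may fail for the given $T$ --- indeed it fails when $T=V_n$. To get around this I would exploit the interval hypothesis. Parametrizing the isosceles triangles of $\ov{\mathcal O}$ by base angle, all but countably many triangles $T_0$ in the interval have angles that are irrational multiples of $\pi$, so for each such $T_0$ the bisection $T_0'$ satisfies both hypotheses of Lemma~\ref{lem:hooper}. Thus the associated saddle connection strikes the base midpoint of $T_0$ exactly twice and is invariant under $i'$.

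The transfer from these generic $T_0$ to the given $T$ rests on the observation that both conclusions are combinatorial conditions on the path of the saddle connection in the unfolding: invariance under $i'$ is a symmetry of the combinatorial path under the permutation of triangle copies that $i'$ induces, and the strike count is the number of vertex-copies lying on the path. Since Construction~\ref{con:new trajectory} fixes this combinatorial data --- the sequence of reflected copies of the triangle --- while only deforming their geometry, the combinatorial type is constant as $T_0$ ranges over an analytic arc of $\partial\mathcal{O}$. Hence, using that $\partial\mathcal{O}$ is piecewise analytic and that irrational-angle triangles are dense, both properties hold on the entire arc through $T$, and in particular for $T$ itself.

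The hard part will be pinning down the two correspondences just used: that quotienting $U(T)$ by reflective symmetry identifies $i'$ with $i$ and sends $U(T)$ to $U(T')$ compatibly with the paths, and that this quotient takes the ``exactly twice'' count for $f'$ to an ``exactly twice'' count for $f$ rather than doubling or halving it. For the latter I would verify that the two sheets of the symmetry quotient meet precisely along the base midpoints, so that the single right-angled vertex of $T'$ has one preimage vertex in $T$ and the two strikes of $f'$ pull back to two strikes of $f$. Establishing that the combinatorial type is genuinely locally constant on $\partial\mathcal{O}$ --- so that density of irrational-angle triangles suffices --- is the other point demanding care.
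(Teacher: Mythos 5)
Your overall strategy is the same as the paper's: bisect $T$ to a right triangle, match $i'$ with Hooper's involution $i$ and base-midpoint strikes with right-angle-vertex strikes, apply Lemma~\ref{lem:hooper} at the irrational-angle triangles of the interval, and pass to $T$ by density. But two of your steps fail as stated. First, your verification of the hypothesis of Lemma~\ref{lem:hooper} rests on the claim that ``the stable trajectory producing $f$ lives on isosceles triangles near $T$.'' That does not follow from $\overline{\mathcal O}$ containing an interval $I$ of isosceles triangles: $\mathcal O$ is open in the two-parameter space of triangles and may approach the isosceles segment entirely from one side, so that $I\subset\partial\mathcal O$ and $\mathcal O$ contains no isosceles triangle at all --- which is exactly the situation one expects (and must allow) in the paper's application to $V_n$. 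Since only isosceles triangles can be bisected, there is then no trajectory available to bisect, and your argument collapses precisely in the case of interest. The paper never makes this claim: it quotients the saddle connection itself on each irrational triangle of the interval and applies Lemma~\ref{lem:hooper} to that quotient, the operative input being the stability of the original (unbisected) trajectory via the homological criterion of Lemma~\ref{lem:from stable}, not the existence of isosceles members of $\mathcal O$.

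Second, your transfer to $T$ asserts that the strike count is part of the combinatorial data that Construction~\ref{con:new trajectory} holds fixed, hence constant along the arc. It is not: the construction fixes the sequence of triangle copies traversed, but which vertex- or midpoint-copies actually lie on the path is a geometric condition that varies with the triangle --- this is the very mechanism by which trajectories in $\mathcal O$ degenerate to saddle connections on $\partial\mathcal O$, and the number of strikes can jump upward at special parameters, with $T$ itself (e.g.\ $T=V_n$) expected to be such a degenerate point of $\partial\mathcal O$. So combinatorial constancy plus density cannot deliver the upper bound of two strikes at $T$. What does survive the limit is the invariance $f=-i'(f)$, a closed condition under the continuous dependence built into Construction~\ref{con:new trajectory}; the count at $T$ must then be rederived from the invariance exactly as in Lemma~\ref{lem:hooper}: $i'$ restricts to an orientation-reversing involution of the closed curve $f$, hence has exactly two fixed points, and every base-midpoint strike is such a fixed point. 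This order of deduction --- invariance first, count second --- is what makes the paper's terse density step work; your order, with the count treated as a combinatorial invariant, does not.
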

\begin{proof}
It suffices to show that this holds for all isosceles triangles in the interval with irrational base angle, 
as then it holds for the whole interval by density and hence for $T$.
Quotienting by the reflection across the line of symmetry of $T$ gives a saddle connection satisfying the hypothesis of Lemma~\ref{lem:hooper}
which thus strikes the right-angled vertex twice. Thus $f$ strikes the midpoint of the base twice, which is the fixed point of $i'$,
so by the same argument as in Lemma~\ref{lem:hooper}, $f=-i'(f)$.
\end{proof}

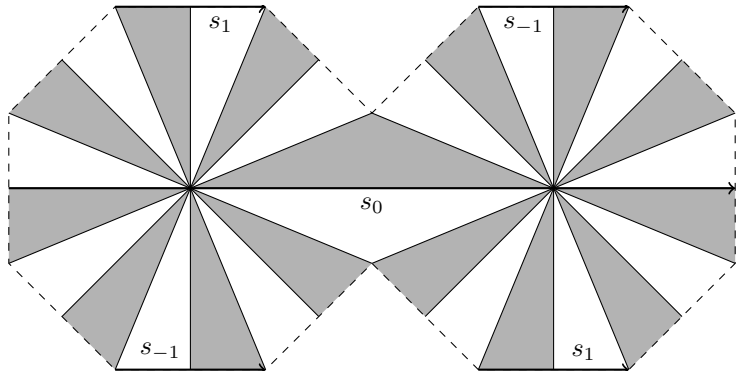
\begin{figure}
\begin{tikzpicture}[scale=2]
\draw[dashed] (0,0) -- (1,0) -- (1.7071,.7071) -- (2.4142,0) -- (3.4142,0) -- (4.1213,.7071) -- (4.1213,1.7071) -- (3.4142,2.4142)
-- (2.4142,2.4142) -- (1.7071,1.7071) -- (1,2.4142) -- (0,2.4142) -- (-.7071,1.7071) -- (-.7071,.7071) -- cycle;
\draw (4.1213,1.2071) -- (-.7071,1.2071);
\draw (0,0) -- (1,2.4142);
\draw (.5,0) -- (.5,2.4142);
\draw (1,0) -- (0,2.4142);
\draw (1.3535,.3535) -- (-.3535,2.0607);
\draw (1.7071,.7071) -- (-.7071,1.7071);
\draw (1.7071,1.7071) -- (-.7071,.7071);
\draw (1.3535,2.0626) -- (-.3535,.3535);
\begin{scope}[shift={(2.4142,0)}]
\draw (0,0) -- (1,2.4142);
\draw (.5,0) -- (.5,2.4142);
\draw (1,0) -- (0,2.4142);
\draw (1.3535,.3535) -- (-.3535,2.0607);
\draw (1.7071,.7071) -- (-.7071,1.7071);
\draw (1.7071,1.7071) -- (-.7071,.7071);
\draw (1.3535,2.0626) -- (-.3535,.3535);
\end{scope}
\fill[opacity = .3] (0.5,1.2071) -- (2.9142,1.2071) -- (1.7071,1.7071) -- cycle;
\fill[opacity = .3] (0.5,0) -- (1,0) -- (.5,1.2071) -- cycle;
\fill[opacity = .3] (1.353,.3535) -- (1.7071,.7071) -- (.5,1.2071) -- cycle;
\fill[opacity = .3] (1.3535,2.0606) -- (1,2.4142) -- (.5,1.2071) -- cycle;
\fill[opacity = .3] (0.5,2.4142) -- (0,2.4142) -- (.5,1.2071) -- cycle;
\fill[opacity = .3] (-.3535,2.0606) -- (-.7071,1.7071) -- (.5,1.2071) -- cycle;
\fill[opacity = .3] (-.7071,1.2071) -- (-.7071,.7071) -- (.5,1.2071) -- cycle;
\fill[opacity = .3] (-.3535,.3535) -- (0,0) -- (.5,1.2071) -- cycle;
\fill[opacity = .3] (1.7071,.7071) -- (2.0606,.3535) -- (2.9142,1.2071) -- cycle;
\fill[opacity = .3] (2.4142,0) -- (2.9142,0) -- (2.9142,1.2071) -- cycle;
\fill[opacity = .3] (3.4142,0) -- (3.7677,.3535) -- (2.9142,1.2071) -- cycle;
\fill[opacity = .3] (4.1213,.7071) -- (4.1213,1.2071) -- (2.9142,1.2071) -- cycle;
\fill[opacity = .3] (4.1213,1.7071) -- (3.7677,2.0606) -- (2.9142,1.2071) -- cycle;
\fill[opacity = .3] (3.4142,2.4142) -- (2.9142,2.4142) -- (2.9142,1.2071) -- cycle;
\fill[opacity = .3] (2.4142,2.4142) -- (2.0606,2.0606) -- (2.9142,1.2071) -- cycle;
\draw[thick, ->] (0,0) -- (1,0) node[pos=.3, above] {$s_{-1}$};
\draw[thick, ->] (2.4142,2.4142) -- (3.4142,2.4142) node[pos=.3, below] {$s_{-1}$};
\draw[thick, ->] (-.7071,1.2071) -- (4.1213,1.2071) node[midway, below] {$s_0$};
\draw[thick, ->] (0,2.4142) -- (1,2.4142) node[pos=.7, below] {$s_1$};
\draw[thick, ->] (2.4142,0) -- (3.4142,0) node[pos=.7, above] {$s_1$};
\end{tikzpicture}
\caption{The saddle connection $s_0$ and saddle connection components $s_{\pm1}$.}
\label{fig:three cases}
\end{figure}

It is easy to check that $\sigma,\tau_e$ and $\tau_o$ all fix the lifts of the base of $V_n$. Thus we only need to be concerned about saddle
connections which contain the image of one of $s_{-1},s_0$ or $s_1$ shown in Figure~\ref{fig:three cases}
under an element of $\langle \sigma,\tau_e,\tau_o\rangle$.

The $s_0$ case is relatively easy to eliminate.

\begin{lem}\label{lem:s_0}
For any $\phi\in \langle \sigma,\tau_e,\tau_o\rangle$, if $f=\phi s_0$ is a saddle connection on $V_n$ which arises from a stable trajectory 
via Construction~\ref{con:new trajectory}, then $\mathring f$ satisfies the hypothesis of Lemma~\ref{lem:isos restriction}.
\end{lem}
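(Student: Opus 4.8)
The plan is to bisect $V_n$ along its axis of symmetry and verify that the resulting right-triangle saddle connection $\bar f$ satisfies all three hypotheses of Lemma~\ref{lem:right restriction}, so that Lemma~\ref{lem:isos restriction} applies directly. First I would identify what the base lifts strike. Because $\sigma,\tau_e,\tau_o$ fix the base lifts, $f=\phi s_0$ is again a horizontal saddle connection running along the base of $V_n$, and the only punctures on this line are the two octagon centers together with the base midpoints. Using the branching behind $p^*(\beta_i)=2n\alpha_i$ --- the centers carry cone angle $2\pi$ and branch $2n$-to-one over the base punctures $\alpha_{\pm1}$ --- I would confirm that these centers are precisely the isosceles ($\tfrac{\pi}{2n}$-angle) vertices, whereas the apex vertices lie at the octagon corners, off the horizontal line. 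Hence $f$ strikes only isosceles vertices and base midpoints, never the apex.

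Next I would quotient by the reflective symmetry to obtain the right triangle $R_n$ with angles $\tfrac{\pi}{2n},\tfrac{\pi}{2},\tfrac{(n-1)\pi}{2n}$. The two isosceles vertices fold onto the single $\tfrac{\pi}{2n}$-vertex, the base midpoints onto the right-angle vertex, and the apex onto the $\tfrac{(n-1)\pi}{2n}$-vertex, which $\bar f$ never meets; this is the second hypothesis of Lemma~\ref{lem:right restriction}. Corollary~\ref{cor:three cases}, which applies because $\ov{\mathcal O}$ contains an interval of isosceles triangles through $V_n$, then supplies the remaining combinatorics: $\bar f$ strikes the right-angle vertex exactly twice per period. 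Since between two consecutive right-angle strikes $\bar f$ meets exactly one isosceles vertex, it strikes the $\tfrac{\pi}{2n}$-vertex once per period, giving the first hypothesis.

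For the third hypothesis I would use that, by Corollary~\ref{cor:three cases}, $\mathring f=-i'(\mathring f)$, where $i'$ rotates each copy of $V_n$ by $\pi$ about its base midpoint and so swaps the two isosceles vertices. As $-i'$ reverses orientation, the semicircle $\mathring f$ makes around $\alpha_1$ and the one it makes around $\alpha_{-1}$ run in opposite senses; since both vertices descend to the single $\tfrac{\pi}{2n}$-vertex of $R_n$, the descended path circles that vertex in both directions, which is the third hypothesis, and these two strikes also show the $\tfrac{\pi}{2n}$-vertex is met more than once, so the period exceeds one. The angle bound is then immediate: with $k=n/2$ (an integer since $n=2^m$) we have $\tfrac{\pi}{2n}=\tfrac{\pi}{4k}$, so $\tfrac{\pi}{4k+2}\le\tfrac{\pi}{2n}\le\tfrac{\pi}{4k}$. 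With all three hypotheses verified for $\bar f$, Lemma~\ref{lem:isos restriction} yields the claim.

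The step I expect to be the main obstacle is the first one: correctly reading off the strike pattern of the base lifts from the geometry of $U(V_n)$ --- in particular, verifying that the octagon centers are the isosceles vertices rather than the apex, that a base lift meets an isosceles vertex exactly once between successive right-angle strikes, and that it avoids the apex entirely. The orientation bookkeeping needed to see that the oppositely oriented semicircles on $V_n$ descend to opposite directions around the $\tfrac{\pi}{2n}$-vertex on $R_n$ also deserves care. Once the puncture geometry is settled, the involution argument in the last two steps is essentially forced by Corollary~\ref{cor:three cases}.
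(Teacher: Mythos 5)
Your proof hinges on the opening claim that $f=\phi s_0$ is ``again a horizontal saddle connection running along the base of $V_n$,'' i.e.\ that every $\phi\in\langle\sigma,\tau_e,\tau_o\rangle$ stabilizes $s_0$. That claim is false, and it is the central gap. The automorphism $\tau_o$ is a Dehn twist in the cylinders containing the $\gamma_k$ for $k$ odd, which lie in the direction $\pi/(2n)$, not the horizontal direction; its derivative is therefore a nontrivial parabolic fixing the direction $\pi/(2n)$, and a nontrivial parabolic fixes only one direction, so $D\tau_o$ moves the horizontal direction and $\tau_o s_0$ is a saddle connection in a genuinely different direction (longer words such as $\tau_e\tau_o$ carry $s_0$ off every lift of the base entirely). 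The paper's remark that the generators ``fix the lifts of the base'' is the weaker statement needed for the reduction to $s_{-1},s_0,s_1$: the generators permute the lifts of the base midpoint and of the base vertices \emph{as point sets}; it does not say $\phi s_0=s_0$. (If it did, the quantifier over $\phi$ in the statement of the lemma would be redundant.) The damage is concentrated in your verification of the first hypothesis of Lemma~\ref{lem:right restriction}. For general $\phi$ all one knows is that $\phi s_0$ strikes each octagon center once per period, because affine automorphisms preserve the degree-one punctures; folding to the right triangle this gives \emph{two} strikes of the $\pi/(2n)$-vertex per period of $f$. To get ``exactly once in its period'' one must show the descended curve has half the period of $f$, i.e.\ that $\phi s_0$ is $\sigma$-invariant; the paper proves this by induction on the word $\phi$ (invariance under $\sigma$ is preserved by $\tau_e^{\pm1}$ and $\tau_o^{\pm1}$), and your argument has nothing in its place. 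Indeed, as written your counting is inconsistent: two right-angle strikes per period with one isosceles strike between each consecutive pair yields two, not one, strikes of the $\pi/(2n)$-vertex per period unless the halving is already known.

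Two secondary points. First, both of your appeals to Corollary~\ref{cor:three cases} import its hypothesis that $\overline{\mathcal O}$ contains an interval of isosceles triangles through $V_n$, which is not among the stated hypotheses of the lemma; the paper's proof needs only stability, getting the strike count from $\sigma$-invariance and the two-directions condition from Lemma~\ref{lem:from stable}. Second, parts of your proposal do survive: the second hypothesis of Lemma~\ref{lem:right restriction} is correct once you replace ``$f$ runs along the base'' by ``$\phi$ preserves the punctures of degree greater than one (the octagon corners) and $s_0$ misses them''; and your third-hypothesis argument via $\mathring f=-i'(\mathring f)$ is in fact a valid alternative to the paper's computation with $p_1^*+p_{-1}^*$, \emph{provided} the semicircle senses are compared in the unfoldings---where the unfolding of the right triangle along the descended path is the same planar development, so senses are preserved---rather than through the orientation-reversing fold of the triangle itself, which would give the opposite (and wrong) conclusion. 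So the involution idea is salvageable, but the lemma as stated cannot be proved this way without first establishing, for arbitrary $\phi$, the $\sigma$-invariance of $\phi s_0$.
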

\begin{proof}
Clearly any saddle connection containing $s_0$ consists solely of $s_0$.
Note that if a saddle connection $f$ on $U(V_n)$ is identical in both $n$-gons, i.e. $f=\sigma f$, then $\tau_e^{\pm 1} f$
and $\tau_o^{\pm 1}f$ are as well. Thus by induction $\sigma f=f$,
so the image of $f$ on the corresponding right triangle has half the period of $f$, thus it only strikes 
the vertex with angle $\frac{\pi}{2n}$ once in its period.

It remains to show that the semicircles in $\mathring f$ cannot all be in the same direction. 
If $\mathring f$ consists of $s_0$ repeated $k$ times with counter-clockwise semicircles around each vertex,
then $\mathring f$ is homotopic to $k$ repetitions of $\gamma_0$ and $p^*(f) = kn(\alpha_{-1}+\alpha_1)$.
By Proposition~9.7 of \cite{NearlyIsosc} (or a simple check of the actions of the generators), $(p^*_1+p^*_{-1})(f)$ is invariant under
the action of $\langle \sigma,\tau_e,\tau_o\rangle$, so $(p^*_1+p^*_{-1})(g\cdot f)=2kn\ne 0$ for any 
$g\in \langle \sigma,\tau_e,\tau_o\rangle$.
Similarly, if we introduce only clockwise semicircles $(p^*_1+p^*_{-1})(g\cdot f)=-2kn\ne 0$. Thus semicircles
in both directions must be introduced.
\end{proof}

The cases of $s_{-1}$ and $s_1$ are clearly analogous to each other, so we focus our attention on $s_1$.

\begin{figure}
\begin{tikzpicture}[scale=1.5]
\draw[->] (0,0) -- (1,0) node[midway, above] {$L_{-5}$};
\draw[dashed] (1,0) -- (1.866,.5) -- (2.366,1.366);
\draw[dashed] (2.366,2.366) -- (1.866,3.232) -- (1,3.732);
\draw[<-] (1,3.732) -- (0,3.732) node[midway, below] {$L_5$};
\draw[dashed] (0,3.732) -- (-.866,3.232) -- (-1.366,2.366) -- (-1.366,1.366) -- (-.866,.5) -- (0,0);
\draw[->] (-.866,.5) -- (1.866,.5) node[midway, above] {$L_{-3}$};
\draw[->] (-1.366,1.366) -- (2.366,1.366) node[midway, above] {$L_{-1}$};
\fill[opacity = .3] (-.866,.5) -- (1.866,.5) -- (2.366,1.366) -- (-1.366,1.366) -- cycle;
\draw[->] (-1.366,2.366) -- (2.366,2.366) node[midway, below] {$L_1$};
\draw[->] (-.866,3.232) -- (1.866,3.232) node[midway, below] {$L_3$};
\fill[opacity = .3] (-1.366,2.366) -- (2.366,2.366) -- (1.866,3.232) -- (-.866,3.232) -- cycle;

\begin{scope}[shift={(3.732,0)}]
\draw[->] (0,0) -- (1,0) node[midway, above] {$R_5$};
\draw[dashed] (1,0) -- (1.866,.5) -- (2.366,1.366) -- (2.366,2.366) -- (1.866,3.232) -- (1,3.732);
\draw[<-] (1,3.732) -- (0,3.732) node[midway, below] {$R_{-5}$};
\draw[dashed] (0,3.732) -- (-.866,3.232) -- (-1.366,2.366);
\draw[dashed] (-1.366,1.366) -- (-.866,.5) -- (0,0);
\draw[->] (-.866,.5) -- (1.866,.5) node[midway, above] {$R_3$};
\draw[->] (-1.366,1.366) -- (2.366,1.366) node[midway, above] {$R_1$};
\fill[opacity = .3] (-.866,.5) -- (1.866,.5) -- (2.366,1.366) -- (-1.366,1.366) -- cycle;
\draw[->] (-1.366,2.366) -- (2.366,2.366) node[midway, below] {$R_{-1}$};
\draw[->] (-.866,3.232) -- (1.866,3.232) node[midway, below] {$R_{-3}$};
\fill[opacity = .3] (-1.366,2.366) -- (2.366,2.366) -- (1.866,3.232) -- (-.866,3.232) -- cycle;
\end{scope}
\end{tikzpicture}
\caption{Labeling of the components of the horizontal saddle connections on $V_6$. Note that $L_{\pm 5}=R_{\pm 5}$.}
\label{fig:horizontal cylinders}
\end{figure}
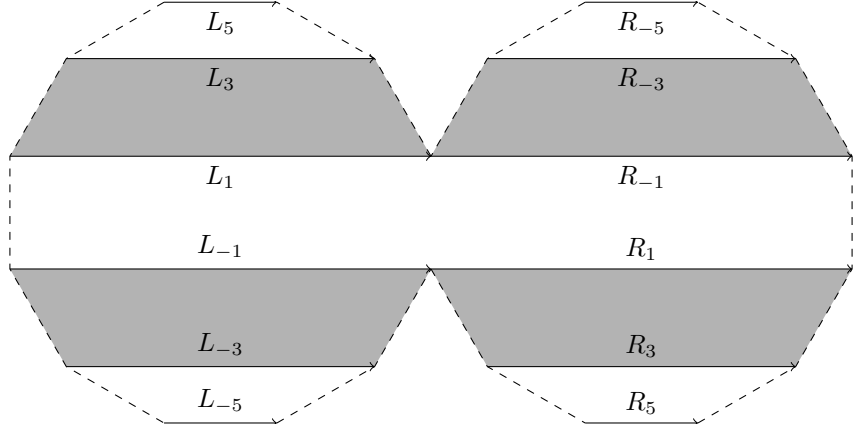

\begin{proof}[Proof of Theorem~\ref{thm:veech}]
By Theorem~1.5 of \cite{NearlyIsosc}, we may assume $f$ degenerates to a saddle connection on $V_n$, and WLOG we may assume it contains $s_1$.
Label the components of the horizontal saddle connections on $U(V_n)$ as shown in Figure~\ref{fig:horizontal cylinders}.
Let $g = \phi^{-1}\mathring f$ for some $\phi\in \langle \sigma,\tau_e,\tau_o\rangle$ be a horizontal saddle connection with semicircles introduced. 
The transitions between the components of $g$, depending on the choice of semicircles, are:
\begin{align*}
L_{\pm(n-1)}=R_{\pm(n-1)} &\to \begin{cases}
L_{\pm(n-3)} &\text{clockwise semicircle}\\
R_{\pm(n-3)} &\text{counter-clockwise semicircle}
\end{cases}\\
L_{k} &\to \begin{cases}
R_{k+2} &\text{clockwise semicircle}\\
R_{k-2} &\text{counter-clockwise semicircle}
\end{cases}\\
R_{k} &\to \begin{cases}
L_{k-2} &\text{clockwise semicircle}\\
L_{k+2} &\text{counter-clockwise semicircle}
\end{cases}
\end{align*}
We will deal with the case $n=4$ separately, so we may assume $n\ge 8$.
If $g$ contains $L_{1-n}$ then it must contain either $L_k$ or $R_k$ for all odd $k$ between $L_{n-1}$ and $L_{1-n}$, and then again
between $L_{1-n}$ and $L_{n-1}$, hence it contains at least $n\ge 2\lfloor \sqrt{2n}\rfloor$ such components and we are done.
Otherwise, if $g$ begins with $L_{n-1}L_{n-3}$ then the remaining components before returning to $L_{n-1}$ must be $L_k$ for
$k\equiv n-3\mod 4$ and $R_k$ for $k\equiv n-1\mod 4$. By Corollary~\ref{cor:three cases}, $g$ contains exactly two copies of $L_{n-1}$.
Since $i'$ interchanges $L_k$ and $R_k$, after $g$ returns to $L_{n-1}$ it travels around semicircles in directions exactly opposite to
the first time. Thus the resulting curve is identical to when $g$ begins with $L_{n-1}R_{n-3}$, up to reparametrization.

The homology class of $g$ is given by 
$$\sum_{k\ge 3} m_k\gamma_{k-1} + m_1(\gamma_0-\beta_1-\beta_{-1}) + \sum_{k\le -1} m_k\gamma_{k+1}$$
where $m_k$ is the number of occurrences of $L_k$ or equivalently of $R_k$.
By Proposition~9.9 of \cite{NearlyIsosc}, for any $\phi\in\langle \sigma,\tau_e,\tau_o\rangle$ we have some odd $r$ such that
\begin{align*}
&(p^*_1-p^*_2)\left(\phi^*\left(\sum_{k\ge 3} m_k\gamma_{k-1} + m_1(\gamma_0-\beta_1-\beta_{-1}) + \sum_{k\le -1} m_k\gamma_{k+1}\right)\right)
\tag{5.1}\\
&\equiv r\left(\sum_{k\ge 3} m_k (k-1+n) + \sum_{k\le -3} m_k (k+1+n)\right)\\
&\equiv -r\left(\sum_{k\ge 3} m_k(n-k+1) +\sum_{k\le -3} m_k(n-k-1)\right)\mod 2n
\end{align*}
If we have at most $x<n$ components, this sum is positive and is maximized when each $m_k$ is $1$, 
except $m_{n-1}$ which is necessarily $2$, which gives 
\begin{align*}
\sum_{k\ge 3} m_k(n-k+1) +\sum_{k\le -3} m_k(n-k-1) &\le 4+\sum_{k=n+1-x}^{n-3} (n-k+1)\\
&=2+(2+4+\cdots+ x - 2)\\
&=2+\left(\frac{x}{2}-1\right)\frac{x}{2}
\end{align*}
and by simple algebra this is less than $2n$ whenever $x<1+\sqrt{8n-7}$.
Thus the expression in (5.1) is nonzero $\bmod\: 2n$ when $x=2\lfloor \sqrt{2n}\rfloor$, 
so $p(\mathring f)$ is not null-homologous, contradicting Lemma~\ref{lem:from stable}.
\end{proof}

In the case of $V_4$, the same analysis gives us a much stronger restriction.

\begin{defn}\label{def:S_j}
$S_j$ is the path which travels along $L_3(L_1R_{-1})^jL_{-3}(L_{-1}R_1)^j$.
\end{defn}

The path $S_1$ is illustrated in Figure~\ref{fig:S_1}.

\begin{figure}
\begin{tikzpicture}[scale=2]
\draw[dashed] (0,0) -- (1,0) -- (1.7071,.7071) -- (2.4142,0) -- (3.4142,0) -- (4.1213,.7071) -- (4.1213,1.7071) -- (3.4142,2.4142)
-- (2.4142,2.4142) -- (1.7071,1.7071) -- (1,2.4142) -- (0,2.4142) -- (-.7071,1.7071) -- (-.7071,.7071) -- cycle;
\draw (4.1213,1.2071) -- (-.7071,1.2071);
\draw (0,0) -- (1,2.4142);
\draw (.5,0) -- (.5,2.4142);
\draw (1,0) -- (0,2.4142);
\draw (1.3535,.3535) -- (-.3535,2.0607);
\draw (1.7071,.7071) -- (-.7071,1.7071);
\draw (1.7071,1.7071) -- (-.7071,.7071);
\draw (1.3535,2.0626) -- (-.3535,.3535);
\begin{scope}[shift={(2.4142,0)}]
\draw (0,0) -- (1,2.4142);
\draw (.5,0) -- (.5,2.4142);
\draw (1,0) -- (0,2.4142);
\draw (1.3535,.3535) -- (-.3535,2.0607);
\draw (1.7071,.7071) -- (-.7071,1.7071);
\draw (1.7071,1.7071) -- (-.7071,.7071);
\draw (1.3535,2.0626) -- (-.3535,.3535);
\end{scope}
\fill[opacity = .3] (0.5,1.2071) -- (2.9142,1.2071) -- (1.7071,1.7071) -- cycle;
\fill[opacity = .3] (0.5,0) -- (1,0) -- (.5,1.2071) -- cycle;
\fill[opacity = .3] (1.353,.3535) -- (1.7071,.7071) -- (.5,1.2071) -- cycle;
\fill[opacity = .3] (1.3535,2.0606) -- (1,2.4142) -- (.5,1.2071) -- cycle;
\fill[opacity = .3] (0.5,2.4142) -- (0,2.4142) -- (.5,1.2071) -- cycle;
\fill[opacity = .3] (-.3535,2.0606) -- (-.7071,1.7071) -- (.5,1.2071) -- cycle;
\fill[opacity = .3] (-.7071,1.2071) -- (-.7071,.7071) -- (.5,1.2071) -- cycle;
\fill[opacity = .3] (-.3535,.3535) -- (0,0) -- (.5,1.2071) -- cycle;
\fill[opacity = .3] (1.7071,.7071) -- (2.0606,.3535) -- (2.9142,1.2071) -- cycle;
\fill[opacity = .3] (2.4142,0) -- (2.9142,0) -- (2.9142,1.2071) -- cycle;
\fill[opacity = .3] (3.4142,0) -- (3.7677,.3535) -- (2.9142,1.2071) -- cycle;
\fill[opacity = .3] (4.1213,.7071) -- (4.1213,1.2071) -- (2.9142,1.2071) -- cycle;
\fill[opacity = .3] (4.1213,1.7071) -- (3.7677,2.0606) -- (2.9142,1.2071) -- cycle;
\fill[opacity = .3] (3.4142,2.4142) -- (2.9142,2.4142) -- (2.9142,1.2071) -- cycle;
\fill[opacity = .3] (2.4142,2.4142) -- (2.0606,2.0606) -- (2.9142,1.2071) -- cycle;
\draw[thick] (.2,2.4142) arc (0:-135:.2cm) node[pos=1, font=\tiny, left] {$6$};
\draw[thick] (.2,2.4142) -- (.8,2.4142) node[pos=1, font=\tiny, above] {$1$};
\draw[thick] (3.2142,0) arc (180:45:.2cm) node[pos=0, font=\tiny, below] {$1$} node[pos=1, font=\tiny, right] {$2$};
\draw[thick] (-.5071,1.7071) arc (0:45:.2cm) node[pos=1, font=\tiny, left] {$2$};
\draw[thick] (-.5071,1.7071) -- (1.5071,1.7071);
\draw[thick] (1.5071,1.7071) arc (180:360:.2cm);
\draw[thick] (1.9071,1.7071) -- (3.9213,1.7071);
\draw[thick] (3.9213,1.7071) arc (180:135:.2cm) node[pos=1, font=\tiny, right] {$3$};
\draw[thick] (.2,0) arc (0:135:.2cm) node[pos=1, font=\tiny, left] {$3$};
\draw[thick] (.2,0) -- (.8,0) node[pos=1, font=\tiny, below] {$4$};
\draw[thick] (3.2142,2.4142) arc (180:315:.2cm) node[pos=0, font=\tiny, above] {$4$} node[pos=1, font=\tiny, right] {$5$};
\draw[thick] (-.5071,.7071) arc (0:-45:.2cm) node[pos=1, font=\tiny, left] {$5$};
\draw[thick] (-.5071,.7071) -- (1.5071,.7071);
\draw[thick] (1.5071,.7071) arc (180:0:.2cm);
\draw[thick] (1.9071,.7071) -- (3.9213,.7071);
\draw[thick] (3.9123,.7071) arc (180:225:.2cm) node[pos=1, font=\tiny, right] {$6$};
\end{tikzpicture}
\caption{The path $S_1$ on $V_4$.}
\label{fig:S_1}
\end{figure}
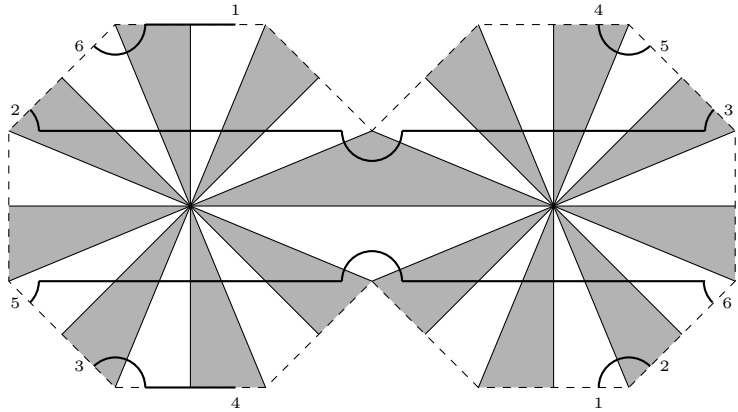

\begin{proof}[Proof of Theorem~\ref{thm:V_4}]
Again we may assume $f$ degenerates to a saddle connection on $V_4$ containing one of $s_1$ or $s_{-1}$. Assume it contains $s_1$.
Let $g = \phi^{-1}\mathring f$ for some $\phi\in \langle \sigma,\tau_e,\tau_o\rangle$ be a horizontal saddle connection with semicircles introduced. 
Suppose $g$ does not contain $L_{-3}$. Then starting from $L_3$, the sequence of components in $g$ before returning to $L_3$ must be either
$(L_1R_{-1})^jL_1$ or $(R_1L_{-1})^jR_1$ for some $j$. Since $i'$ interchanges $L_k$ and $R_k$, by Corollary~\ref{cor:three cases}
the opposite sequence must also occur. But since $f$ can only strike the midpoint of the base of $V_4$ twice, it can only contain two copies of
$L_3$, hence it must consist precisely of $L_3(L_1R_{-1})^jL_1L_3(R_1L_{-1})^jR_1$ or $L_3(R_1L_{-1})^jR_1L_3(L_1R_{-1})^jL_1$, which are
identical up to reparametrization. The homology class of this curve is $2\gamma_1+j(2\gamma_0-\beta_1-\beta_{-1})$.
By Proposition~9.9 of \cite{NearlyIsosc}, we have some odd integer $r$ such that
$(p^*_1-p^*_2)(\phi^*\gamma_1)\equiv r+n\mod 2n$, while $(p^*_1-p^*_2)(\phi^*(2\gamma_0-\beta_1-\beta_{-1}))\equiv 0\mod 2n$. Thus
$$(p^*_1-p^*_2)(\phi^*(2\gamma_1+j(2\gamma_0-\beta_1-\beta_{-1})))\equiv 2r\not\equiv 0\mod 2n$$
so $p(\mathring f)$ is not null-homologous, contradicting Lemma~\ref{lem:from stable}.

It follows that $g$ must contain $L_{-3}$. Thus by the same analysis, $g$ travels along $L_3(L_1R_{-1})^jL_{-3}(L_{-1}R_1)^j$
or $L_3(R_1L_{-1})^jL_{-3}(R_{-1}L_1)^j$, which are $S_j$ and $-S_j$ respectively.
Furthermore, since $g$ contains $s_{-1}$ as well, the $s_{-1}$ case is identical.
Thus $\mathring f = \pm\phi S_j$.
\end{proof}

\section*{Acknowledgments}

I would like to thank Howard Masur for helpful conversations on billiards,
MathOverflow user Matheus for directing me to \cite{FlatStruc} and correcting a misunderstanding of mine concerning the article,
and my longtime friend and mentor Bozenna Pasik-Duncan for her advice in the development of this manuscript as well as my mathematical
career as a whole.
I am also greatly indebted to Pat Hooper and Richard Schwartz for creating the McBilliards software, which was invaluable
for experimentation in the early stages of my research.

\end{document}